\newtheorem{theorem}{Theorem}[section]
\newtheorem{lemma}[theorem]{Lemma}
\newtheorem{proposition}[theorem]{Proposition}
\newtheorem{remark}[theorem]{Remark}
\newtheorem{assumption}[theorem]{Assumption}
\numberwithin{equation}{section}
\newcommand{\mc}[1]{{\mathcal #1}}
\newcommand{\ms}[1]{{\mathscr #1}}
\newcommand{\mf}[1]{{\mathfrak #1}}
\newcommand{\mb}[1]{{\mathbf #1}}
\newcommand{\bb}[1]{{\mathbb #1}}
\newcounter{as}[section]
\newtheorem{asser}[as]{Assertion}
\newcommand{\<}{\langle}
\renewcommand{\>}{\rangle}
\renewcommand{\Cap}{{\rm cap}}
\begin{document}

\title[Metastability of the Blume-Capel model] {Metastability of the
  two-dimensional Blume-Capel model with zero chemical potential and
  small magnetic field}

\author{C. Landim, P. Lemire}

\address{\noindent IMPA, Estrada Dona Castorina 110, CEP 22460-320 Rio
  de Janeiro, Brasil. \newline
\noindent CNRS UMR 6085, Universit\'e de Rouen, Avenue de
  l'Universit\'e, BP.12, Technop\^ole du Madril\-let, F76801
  Saint-\'Etienne-du-Rouvray, France.  \newline e-mail: \rm
  \texttt{landim@impa.br} }

\address{\noindent Avenue de
  l'Universit\'e, BP.12, Technop\^ole du Madril\-let, F76801
  Saint-\'Etienne-du-Rouvray, France. 
\newline e-mail: \rm \texttt{paul.lemire@etu.univ-rouen.fr} }

\begin{abstract}
  We consider the two-dimensional Blume-Capel model with zero chemical
  potential and small magnetic field evolving on a large but finite
  torus.  We obtain sharp estimates for the transition time, we
  characterize the set of critical configurations, and we prove the
  metastable behavior of the dynamics as the temperature vanishes.
\end{abstract}

\maketitle

\section{Introduction}
\label{intro}

Since its first rigorous mathematical treatment \cite{lp71, fw98,
  cgov84, ov1, bh15}, metastability has been the subject of intensive
investigation from different perspectives \cite{ev06, bbf09, msv09,
  bg15, fmnss15}.

In \cite{begk1, begk2} Bovier, Eckhoff, Gayrard and Klein, BEGK from
now on, have shown that the potential theory of Markov chains can
outperform large deviations arguments and provides sharp estimates for
several quantities appearing in metastability, such as the expectation
of the exit time from a well or the probability to hit a configuration
before returning to the starting configuration.

Developing further BEGK's potential-theoretic approach, and with an
intensive use of data reduction through trace processes, Beltr\'an and
one of the authors of this paper, BL from now on, devised a scheme to
describe the evolution of a Markov chain among the wells, particularly
effective when the dynamics presents several valleys of the same depth
\cite{bl2, bl7, bl9}. The outcome of the method can be understood as a
model reduction through coarse-graining, or as the derivation of the
evolution of the slow variables of the chain.

In the case of finite state Markov chains \cite{bl4, lx15}, under
minimal assumptions, BL's method permits the identification of all
slow variables, the derivation of the time-scales at which they evolve
and the characterization of their asymptotic dynamics.

In contrast with the pathwise approach \cite{cgov84, ov1} and the
transition path theory \cite{ev06, msv09}, BEGK's and BL's approach do
not attempt to describe the tube of typical trajectories in a
transition between two valleys, nor does it identify the critical
configurations which are visited with high probability in such
transitions.

Nevertheless, under weak hypotheses, introduced in Section \ref{sec2}
below, potential-theoretic arguments together with data reduction
through trace processes provide elementary identities and estimates
which permit, without much effort, to characterize the critical
configurations, and to compute the sub-exponential pre-factors of the
expectation of hitting times.  The purpose of this paper is to
illustrate these assertions by examining the metastable behavior of
the Blume-Capel model.

The Blume-Capel model is a two dimensional, nearest-neighbor spin
system where the single spin variable takes three possible values:
$-1$, $0$ and $+1$. One can interpret it as a system of particles with
spins. The value $0$ of the spin at a lattice site corresponds to the
absence of particles, whereas the values $\pm 1$ correspond to the
presence of a particle with the respective spin.

The metastability of the Blume-Capel model has been investigated by
Cirillo and Olivieri \cite{co96}, Manzo and Olivieri \cite{mo01}, and
more recently by Cirillo and Nardi \cite{cn13}. 

We consider here a Blume-Capel model with zero chemical potential and
a small positive magnetic field. We examine its metastable behavior in
the zero-temperature limit in a large, but fixed, two-dimensional
square with periodic boundary conditions. In this case, there are two
metastable states, the configurations where all spins are equal to
$-1$ or all spins equal to $0$, and one ground state, the
configuration where all spins are equal to $+1$.

The main results state that starting from $\bf -1$, the configuration
where all spins are equal to $-1$, the chain visits $\bf 0$ before
hitting $\bf +1$. We also characterize the set of critical
configurations. These results are not new and appeared in \cite{co96,
  cn13}, but we present a proof which relies on a simple inequality
from the potential theory of Markov chains. We compute the exact
asymptotic values of the transition times, which corresponds to the
life-time of the metastable states. The previous results on the
transition time, based on the pathwise approach which relies on large
deviations arguments, presented estimates with exponential errors. To
complete the picture, we show that the expectation of the hitting time
of the configuration $\bf 0$ starting from $\bf -1$ is much larger
than the transition time. This phenomenon, which may seem
contradicting the fact that the chain visits $\bf 0$ before hitting
$\bf +1$, occurs because the main contribution to the expectation
comes from the event that the chain first hits $\bf +1$ and then
visits $\bf 0$. The very small probability of this event is
compensated by the very long time the chain remains at $\bf +1$.

Finally, we prove the metastable behavior of the Blume-Capel model in
the sense of BL. Let $\Sigma$ be the set of configurations and let
$\ms V_a$ be a neighborhood of the configuration $\bf a$, where $a=0,
\pm 1$. For example $\ms V_a = \{{\bf a}\}$. Fix a real number $\mf d\not\in
\{-1,0,+1\}$, and let $\phi: \Sigma \to \{-1,0,1, \mf d\}$ be the
projection defined by
\begin{equation*}
\phi(\sigma) \;=\; \sum_{a=-1,0,1} a \, \mb 1\{\sigma \in \ms
V_{\bf a}\} \;+\; \mf d \, \mb 1\Big\{\sigma \not\in \bigcup_{a=-1,0,+1} \ms
V_{\bf a} \Big\}\;.
\end{equation*}
Denote the inverse of the temperature by $\beta$.  We prove that there
exists a time scale $\theta_\beta$ for which
$\phi(\sigma(t\theta_\beta))$ converges, as $\beta\to\infty$, to a
Markov chain in $\{-1,0,+1\}$. The point $+1$ is an absorbing point
for this Markov chain, and the other jump rates are given by
\begin{equation*}
r(-1,0) \;=\;  r(0,1) \;=\; 1\;, \quad
r(-1,1) \;=\; r(0,-1)\;=\; 0\;.  
\end{equation*}
As we said above this result can be interpreted as a model reduction
by coarse-graining, or as the identification of a slow variable,
$\phi$, whose evolution is asymptotically Markovian.

The article is divided as follows. In Section \ref{not}, we state the
main results. In Section \ref{sec2}, we introduce the main tools used
throughout the article and we present general results on finite-state
reversible Markov chains. In section \ref{sec3}, we examine the
transition from ${\bf -1}$ to $\bf 0$, and in Section \ref{sec4} the
one from $\bf 0$ to $\bf +1$. In Section \ref{sec6}, we analyze the
hitting time of ${\bf 0}$ starting from $\bf -1$.  In the last
section, we prove the metastable behavior of the Blume-Capel model
with zero chemical potential as the temperature vanishes.

\section{Notation and main results}
\label{not}

Fix $L>1$ and let $\Lambda_L=\bb T_L\times\bb T_L$, where $\bb
T_L=\{1,\cdots,L\}$ is the discrete, one-dimensional torus of length
$L$.  Denote the configuration space by
$\Omega=\{-1,0,1\}^{\Lambda_L}$, and by the Greek letters $\sigma$,
$\eta$, $\xi$ the configurations of $\Omega$. Hence, $\sigma(x)$,
$x\in \Lambda_L$, represents the spin at $x$ of the configuration
$\sigma$.

Fix an external field $0 < h < 1$, and denote by $\mathbb H : \Omega
\to \bb R$ the Hamiltonian given by
\begin{equation}
\label{eq06}
\mathbb H (\sigma) \;=\; 
\sum \big(\sigma(y)-\sigma(x)\big)^2 \, -\, h\sum_{x\in
  \Lambda_L}\sigma(x)\;, 
\end{equation}
where the first sum is carried over all unordered pairs of
nearest-neighbor sites of $\Lambda_L$. Let $n_0 = [2/h]$, where $[a]$
represents the integer part of $a\in\bb R_+$. We assume that
$L>n_0+3$.

Denote by $\beta>0$ the inverse of the temperature and by $\mu_\beta$
the Gibbs measure associated to the Hamiltonian $\mathbb H$ at inverse
temperature $\beta$,
\begin{equation}
\label{mesgibbs}
\mu_\beta (\sigma)\, =\, \frac 1{Z_\beta} \, e^{-\beta \mathbb H (\sigma)} \, ,
\end{equation}
where $Z_\beta$ is the partition function, the normalization constant
which turns $\mu_\beta$ a probability measure.

Denote by ${\bf -1}$, $\textbf{0}$, $\textbf{+1}$ the
configurations of $\Omega$ with all spins equal to $-1$, $0$, $+1$,
respectively. These three configurations are local minima of the
energy $\bb H$, $\bb H(\textbf{+1})<\bb H(\textbf{0})<\bb
H({\bf -1})$, and $\textbf{+1}$ is the unique ground state.

The Blume-Capel dynamics is the continuous-time Markov chain on
$\Omega$, denoted by $\{\sigma_t \; :\ \ t\ge 0\}$, whose
infinitesimal generator $L_\beta$ acts on functions $f: \Omega\to
\bb R$ as
\begin{equation}
\label{gen1}
\begin{aligned}
(L_\beta f) (\sigma) \, &=\, 
\sum_{x \in \Lambda_L} R_\beta(\sigma,\sigma^{x,+}) \big[ f(\sigma^{x,+}) - f(\sigma) \big] \\
& +\, \sum_{x \in \Lambda_L} R_\beta(\sigma,\sigma^{x,-}) \big[ f(\sigma^{x,-}) - f(\sigma) \big] \, .
\end{aligned}
\end{equation}
In this formula, $\sigma^{x,\pm}$ represents the configuration obtained
from $\sigma$ by modifying the spin at $x$ as follows,
\begin{equation*}
\sigma^{x,\pm} (z) := 
\begin{cases}
\sigma (x) \pm 1 \ \textrm{mod}\ 3 & \textrm{ if \ } z=x\\
\sigma (z) & \textrm{ if \ } z\neq x\, ;
\end{cases}
\end{equation*}
and the rates $R_\beta$ are given by
\begin{equation*}
R_\beta(\sigma,\sigma^{x,\pm}) \, =\, 
\exp\Big\{ -\beta \big[\mathbb H (\sigma^{x ,\pm}) 
- \mathbb H (\sigma) \big]_+\Big\}\, , \quad x\in \Lambda_L\, ,
\end{equation*}
where $a_+$, $a\in \bb R$, stands for the positive part of $a$: $a_+ =
\max\{a, 0\}$.  

Clearly, the Gibbs measure $\mu_\beta$ satisfies the detailed balance
condition
\begin{equation*}
\mu_\beta (\sigma) R_\beta(\sigma,\sigma^{x,\pm})\, =\, 
\min\big\{\mu_\beta (\sigma)\, ,\, \mu_\beta (\sigma^{x, \pm}) \big\}
\, =\, \mu_\beta (\sigma^{x,\pm}) R_\beta(\sigma^{x,\pm},\sigma)\, ,
\end{equation*}
and is therefore reversible for the dynamics.

Denote by $D(\bb R_+, \Omega)$ the space of right-continuous functions
${\bf x}: \bb R_+ \to \Omega$ with left-limits and by $\bb
P_\sigma=\bb P^\beta_\sigma$, $\sigma\in \Omega$, the probability
measure on the path space $D(\bb R_+, \Omega)$ induced by the Markov
chain $\sigma_t$ starting from $\sigma$. Expectation with respect to
$\bb P_\sigma$ is represented by $\bb E_\sigma$.

Denote by $H_{\ms A}$, $H^+_{\ms A}$, ${\ms A}\subset \Omega$, the
hitting time and the time of the first return to ${\ms A}$,
respectively:
\begin{equation}
\label{201}
H_{\ms A} \;=\; \inf \big \{t>0 : \sigma_t \in {\ms A} \big\}\;,
\quad
H^+_{\ms A} \;=\; \inf \big \{t>\tau_1 : \sigma_t \in {\ms A} \big\}\; ,  
\end{equation}
where $\tau_1$ represents the time of the first jump of the chain
$\sigma_t$.  We sometimes write $H({\ms A})$, $H^+({\ms A})$ instead
of $H_{\ms A}$, $H^+_{\ms A}$.

\begin{proposition}
\label{0b1}
Starting from $\bf -1$ the chain visits the state $\bf 0$ in its way
to the ground state $\bf +1$.
\begin{equation*}
\lim_{\beta\to\infty} 
\bb P_{{\bf -1}} [H_{\textbf{+1}} < H_{\textbf{0}}]  \;=\; 0\;.
\end{equation*}
\end{proposition}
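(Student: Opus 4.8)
The plan is to exploit the potential-theoretic characterization of hitting probabilities via capacities. Recall that for a reversible chain, the probability $\bb P_{{\bf -1}}[H_{\textbf{+1}} < H_{\textbf{0}}]$ can be controlled by comparing capacities: more precisely, one has the bound
\begin{equation*}
\bb P_{{\bf -1}}[H_{\textbf{+1}} < H_{\textbf{0}}] \;\le\;
\frac{\Cap({\bf -1}, \textbf{+1})}{\Cap({\bf -1}, \textbf{0})}\;,
\end{equation*}
which follows from the Dirichlet principle and the fact that the equilibrium potential between $\bf -1$ and $\bf 0$ is a valid (though non-optimal) test function for the variational problem defining $\Cap({\bf -1}, \textbf{+1})$ against $\textbf{0}$. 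So the proof reduces to showing that $\Cap({\bf -1}, \textbf{+1}) / \Cap({\bf -1}, \textbf{0}) \to 0$ as $\beta\to\infty$.

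The next step is to estimate both capacities on the exponential scale. By the standard Gibbs-measure asymptotics, each capacity behaves like $\mu_\beta$ of the starting well times $e^{-\beta \Gamma}$, where $\Gamma$ is the communication height (the minimal energy barrier) between the two configurations along the best path, up to sub-exponential corrections. Thus I would show
\begin{equation*}
\frac{1}{\beta}\log \Cap({\bf -1}, \textbf{0}) \;\longrightarrow\; -\,\big(\mathbb H(\textbf{\textbf{0}}) \;\text{barrier}\big), \qquad
\frac{1}{\beta}\log \Cap({\bf -1}, \textbf{+1}) \;\longrightarrow\; -\,\big(\text{barrier from } {\bf -1} \text{ to } \textbf{+1}\big),
\end{equation*}
and argue that the effective energy barrier separating $\bf -1$ from $\bf +1$ is strictly higher than the one separating $\bf -1$ from $\bf 0$. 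The geometric content here is that any path from $\bf -1$ to $\bf +1$ must either pass through (a neighborhood of) $\bf 0$ — in which case it inherits the $\bf -1 \to \bf 0$ barrier plus an additional positive cost to then leave $\bf 0$ toward $\bf +1$ — or bypass $\bf 0$ entirely, which the droplet/nucleation analysis of the Blume-Capel Hamiltonian shows is even more costly (creating a $+1$ droplet directly inside a sea of $-1$ requires crossing an interface of both $0$'s and $+1$'s). Either way the exponential rate is strictly larger, giving the ratio $\to 0$.

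The main obstacle is the lower bound on $\Cap({\bf -1}, \textbf{0})$ and the matching upper bound on $\Cap({\bf -1}, \textbf{+1})$, since capacities are delicate: the upper bound on a capacity needs a good flow (or a good reference path for the lower bound on the barrier), while the lower bound needs a good test function. In practice I expect to produce an explicit test function for the lower bound on $\Cap({\bf -1}, \textbf{0})$ — essentially the indicator that the number of $+1$ and $0$ spins is large enough — and to bound $\Cap({\bf -1}, \textbf{+1})$ from above by a unit flow concentrated along a single optimal nucleation path, whose energy profile is computed from the Hamiltonian \eqref{eq06}. Since only the exponential order matters for the limit, these estimates need not be sharp, which considerably simplifies the bookkeeping; the careful identification of the relevant communication heights, and the verification that the $\bf -1 \to \bf +1$ height strictly exceeds the $\bf -1 \to \bf 0$ height, is where the real work lies.
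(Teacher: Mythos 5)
Your reduction to the capacity ratio is precisely the step that fails. The bound \eqref{mf02} does give
\begin{equation*}
\bb P_{\bf -1}[H_{\bf +1} < H_{\bf 0}] \;\le\; \frac{\Cap({\bf -1},{\bf +1})}{\Cap({\bf -1},{\bf 0})}\,,
\end{equation*}
but this ratio does not tend to $0$: it tends to $1$. The paper proves exactly this in Assertions \ref{as18} and \ref{as64}, which state that both $\Cap({\bf -1},{\bf 0})$ and $\Cap({\bf -1},{\bf +1})$ are asymptotically equal to $\Cap({\bf -1},\{{\bf 0},{\bf +1}\})$. The underlying reason is that the communication height is a \emph{maximum} along the path, not a sum of successive barriers. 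By \eqref{200}, the barrier from $\bf 0$ to $\{{\bf -1},{\bf +1}\}$, measured from $\bb H({\bf 0})$, equals the barrier from $\bf -1$ to $\{{\bf 0},{\bf +1}\}$ measured from $\bb H({\bf -1})$; since $\bb H({\bf 0})<\bb H({\bf -1})$, the second saddle (between $\bf 0$ and $\bf +1$) sits at a strictly \emph{lower} absolute energy than the first one (between $\bf -1$ and $\bf 0$). Hence the optimal path from $\bf -1$ to $\bf +1$ passing through $\bf 0$ attains its maximal energy at the $\bf -1\to\bf 0$ saddle, so $\bb H({\bf -1},{\bf +1})=\bb H({\bf -1},{\bf 0})$. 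Your claim that the $\bf -1\to\bf +1$ communication height strictly exceeds the $\bf -1\to\bf 0$ one is therefore false, and no refinement of test functions or flows can rescue the argument: the inequality you start from is structurally too weak for this event, which is decided by what happens \emph{after} the first saddle is crossed, something a two-point capacity cannot see.

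The paper's proof works at that finer level. It conditions on the first configuration visited in the critical set $\mf B^+$: by Lemma \ref{as61} this hitting distribution concentrates uniformly on the set $\mf R^a$ of critical droplets, and by \eqref{411} from any such droplet the chain returns to $\{{\bf -1},{\bf 0}\}$ before reaching $\bf +1$. Consequently the event $\{H_{\bf +1}<H_{\bf 0}\}$ forces a return to $\bf -1$, and the strong Markov property yields a fixed-point relation $\bb P_{\bf -1}[H_{\bf +1}<H_{\bf 0}] = c_\beta\,\bb P_{\bf -1}[H_{\bf +1}<H_{\bf 0}] + o(1)$ with $\limsup_\beta c_\beta<1$, because a droplet attached to a long side of the critical rectangle falls to $\bf 0$ with asymptotic probability at least $1/2$ (Lemma \ref{as11b}). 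This gives the vanishing of the probability. If you wish to salvage a capacity-style argument, you would have to compare capacities for the chain with $\bf 0$ removed (or made absorbing), which is a genuinely different and more delicate computation than the one you outline.
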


Recall the definition of $n_0$ introduced just below \eqref{eq06}.
Denote by $\mf R^l$ (resp. $\mf R^l_0$) the set of configurations in
$\{-1,0, +1\}^{\Lambda_L}$ in which there are $n_0(n_0+1)+1$ spins
which are not equal to $-1$ (resp. $0$). Of these spins, $n_0(n_0+1)$
form a $n_0\times (n_0+1)$-rectangle of $0$ spins (resp. $+1$
spins). The remaing spin not equal to $-1$ is equal to $0$
(resp. $+1$) and is attached to the longest side of the rectangle
(cf. configuration $\sigma'$ in Figure \ref{fig1} with $n_0=5$). All
configurations in $\mf R^l$ have the same energy, as well as all
configurations in $\mf R^l_0$.

The next result states that, starting from $\bf -1$, the chain reaches
the set $\mf R^l$ before hitting $\bf 0$.

\begin{proposition}
\label{mt4}
We have that
\begin{equation*}
\lim_{\beta\to\infty} \bb P_{{\bf -1}} [H_{\mf R^l} < H_{\bf 0}]
\;=\; 1\;, \quad
\lim_{\beta\to\infty} \bb P_{{\bf 0}} [H_{\mf R^l_0} < H_{\bf +1}]\;.
\end{equation*}
\end{proposition}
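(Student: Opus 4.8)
The plan is to deduce Proposition~\ref{mt4} from Proposition~\ref{0b1} together with a careful analysis of the energy landscape between $\bf -1$ and $\bf 0$, and symmetrically between $\bf 0$ and $\bf +1$. Since both statements are of the same nature (the chain reaches a specific set of ``critical'' configurations before completing the transition to the lower-energy metastable state), I will concentrate on the first assertion, $\lim_{\beta\to\infty}\bb P_{\bf -1}[H_{\mf R^l}<H_{\bf 0}]=1$, and indicate that the second follows by replacing the pair $(-1,0)$ by $(0,+1)$ throughout. The key point is that $\mf R^l$ is (up to lower-order-energy configurations) exactly the set of configurations of maximal energy that must be traversed on any path from $\bf -1$ to $\bf 0$ that does not first go up in energy beyond the communication height: the optimal nucleation mechanism grows a droplet of $0$'s inside the sea of $-1$'s, the droplet becomes supercritical precisely when it reaches an $n_0\times(n_0+1)$ rectangle with one extra protuberance on the long side, and $\mf R^l$ collects these critical droplet shapes.

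First I would recall the communication height $\Gamma = \bb H(\mf R^l)-\bb H(\bf -1)$ between $\bf -1$ and $\bf 0$, and establish the geometric fact (standard in the isoperimetric analysis of such spin systems, e.g.\ the arguments in \cite{co96, cn13, mo01}) that every path $\omega:\bf -1\to\bf 0$ whose maximal energy equals $\bb H(\bf -1)+\Gamma$ must pass through $\mf R^l$; moreover any path that avoids $\mf R^l$ has maximal energy strictly larger, by at least a constant $\delta>0$ independent of $\beta$. Then I would decompose according to whether the chain, on its way from $\bf -1$ to $\bf 0$, ever visits $\mf R^l$. By Proposition~\ref{0b1} we know the chain does reach $\bf 0$ before $\bf +1$ with probability tending to one, so it suffices to show that, conditioned on $H_{\bf 0}<H_{\bf +1}$, the path visits $\mf R^l$ before $\bf 0$ with probability tending to one. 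Writing $A_\beta=\{H_{\mf R^l}>H_{\bf 0}\}$, on $A_\beta$ the realized trajectory is a path from $\bf -1$ to $\bf 0$ avoiding $\mf R^l$, hence has maximal energy at least $\bb H(\bf -1)+\Gamma+\delta$; by a standard large-deviations lower bound on the probability of crossing such an energy barrier (Proposition~\ref{0b1}-type potential-theoretic estimate, or directly $\bb P_{\bf -1}[\text{max energy along path to }\bf 0 \ge \bb H(\bf -1)+\Gamma+\delta] \le e^{-\beta(\Gamma+\delta)}/Z_\beta^{-1}\cdot(\text{poly})$ compared with $\bb P_{\bf -1}[H_{\bf 0}<H_{\bf +1}]\asymp e^{-\beta\Gamma}(\text{poly})$), the ratio $\bb P_{\bf -1}[A_\beta]/\bb P_{\bf -1}[H_{\bf 0}<H_{\bf +1}] \le C e^{-\beta\delta}\to 0$. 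I expect the cleanest route is to use the capacity/Dirichlet-form inequalities of Section~\ref{sec2} rather than pathwise large deviations: bound $\Cap(\bf -1, \mf R^l\cup\bf 0)$ from below and the capacity of the ``bad'' set from above, so that the escape probability without touching $\mf R^l$ is exponentially smaller.

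For the second assertion I would run the identical argument with $\bf -1\rightsquigarrow \bf 0$, $\bf 0\rightsquigarrow\bf +1$, $\mf R^l\rightsquigarrow\mf R^l_0$, using that the communication height between $\bf 0$ and $\bf +1$ is again realized exactly on $\mf R^l_0$ and that Proposition~\ref{0b1} (or its analogue, which follows since $\bf +1$ is the ground state so the chain started at $\bf 0$ certainly reaches $\bf +1$) guarantees the transition occurs on the relevant scale. Note the statement as printed is slightly elliptical --- the right-hand limit is ``$\lim_{\beta\to\infty}\bb P_{\bf 0}[H_{\mf R^l_0}<H_{\bf +1}]$'' with no value; I read this as asserting the limit equals $1$, and that is what the argument gives.

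\emph{Main obstacle.} The genuinely hard part is the geometric/combinatorial input: proving that $\mf R^l$ is precisely the minimal gate, i.e.\ that (i) there exists an optimal path through $\mf R^l$, (ii) no path of the same maximal energy avoids $\mf R^l$, and (iii) the penalty $\delta$ for avoiding it is bounded below uniformly in $\beta$. This is an isoperimetric statement about growing/shrinking droplets of $0$'s in a background of $-1$'s on the torus $\Lambda_L$, complicated here by the presence of three spin values (so a droplet could in principle contain $+1$'s, which one must rule out as energetically wasteful given $h<1$ and the definition of $n_0=[2/h]$) and by the periodic boundary conditions combined with the constraint $L>n_0+3$, which guarantees the critical droplet fits without wrapping. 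I would isolate this as a separate lemma, proved by the classical route (reflection/``bootstrap'' arguments on the perimeter, monotonicity of the energy under standard droplet moves), closely following \cite{cn13}; the potential-theoretic part above is then comparatively routine.
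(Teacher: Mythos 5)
Your route is genuinely different from the paper's, and the probabilistic half of it is sound, but the combinatorial half is both deferred and subtler than you acknowledge. The paper does \emph{not} prove (or need) the global gate property you rely on, namely that every path from $\bf -1$ to $\bf 0$ avoiding $\mf R^l$ exceeds the communication height by a fixed $\delta>0$. Instead it proves (Lemma \ref{as61}) that the first hitting distribution of $\mf B^+$ is asymptotically uniform on $\mf R^a=\mf R^l\cup\mf R^s$, shows via the local saddle analysis (Assertions \ref{as9}, \ref{as10}) that from $\mf R^s$ the chain returns to $\bf -1$ before hitting $\mf R^l\cup\{{\bf 0}\}$ with probability tending to one, and closes with a renewal identity: writing $p=\bb P_{\bf -1}[H_{\mf R^l}<H_{\bf 0}]$, one gets $p=(1+o(1))\,|\mf R^a|^{-1}\{|\mf R^l|+|\mf R^s|\,p\}$, whence $p\to 1$. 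This buys a proof that only requires understanding one step of the dynamics around the critical droplet (already needed for Proposition \ref{mt3}), whereas your approach requires a global statement about the whole sublevel set.

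The specific danger in your gate lemma is exactly $\mf R^s$: it has the \emph{same} energy as $\mf R^l$, so the gate property cannot be read off from the saddle level set alone. One must show that from $\mf R^s$ the only continuation inside the sublevel set fills the short row, producing the still-subcritical $n_0\times(n_0+2)$ rectangle, whose energy is $\Gamma-(n_0-1)h$ above $\bb H({\bf -1})$ and from which any further protuberance costs $2-h$, overshooting $\Gamma$ by $2-n_0h$. This does give $\delta=\min\{h,\,2-n_0h\}>0$, but only when $2/h\notin\bb Z$ (otherwise $2-n_0h=0$ and your argument collapses); the paper's route is insensitive to this because it never quantifies an energy gap above $\mf R^s$, only a return probability. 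Once the gate lemma is in hand, your capacity step is the right one, but state it as $\bb P_{\bf -1}[A_\beta]\le\Cap({\bf -1},\ms W)/\Cap({\bf -1},{\bf 0})\preceq e^{-\beta\delta}$ via \eqref{mf02}, \eqref{401} and \eqref{mf04}, where $\ms W$ is the set of configurations of energy at least $\bb H({\bf -1})+\Gamma+\delta$; your displayed comparison ``$\bb P_{\bf -1}[H_{\bf 0}<H_{\bf +1}]\asymp e^{-\beta\Gamma}(\mathrm{poly})$'' is wrong as written (that probability tends to $1$ by Proposition \ref{0b1}); it is the escape probability per excursion, equivalently the capacity, that carries the factor $e^{-\beta\Gamma}$. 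Your reading of the truncated second limit as equal to $1$, and the reduction of the $({\bf 0},{\bf +1})$ case by symmetry, match the paper.
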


Denote by $\lambda_\beta(\sigma)$, $\sigma\in\Omega$, the holding
rates of the Markov chain $\sigma_t$, and by $p_\beta(\eta,\xi)$,
$\eta$, $\xi\in \Omega$, the jump probabilities, so that
$R_\beta(\eta,\xi) = \lambda_\beta(\eta) p_\beta(\eta,\xi)$. Let
$M_\beta(\eta)=\mu_\beta(\eta) \lambda_\beta(\eta)$ be the stationary
measure for the embedded discrete-time Markov chain.

Denote by $\Cap (\ms A, \ms B)$ the capacity between two disjoint
subsets $\ms A$, $\ms B$ of $\Omega$: 
\begin{equation}
\label{202}
\Cap (\ms A, \ms B) \;=\; \Cap_\beta (\ms A, \ms B) 
\;=\; \sum_{\sigma\in\ms A} M_\beta(\sigma)\,
\bb P_\sigma [H_{\ms B} < H_{\ms A}^+]\;,
\end{equation}
and let 
\begin{equation}
\label{203}
\theta_\beta \;=\;  \frac{\mu_\beta({\bf -1})} 
{\Cap ({\bf -1},\{\textbf{0}, {\bf +1}\}) } 
\end{equation}
be the time-scale in which the Blume-Capel model reaches the ground
state $\bf +1$ starting from the local minima $\bf -1$ or $\bf 0$.

\begin{proposition}
\label{mt3}
For any configuration $\eta\in\mf R^l$ and any configuration
$\xi\in\mf R^l_0$, 
\begin{equation*}
\lim_{\beta\to\infty} \frac{\Cap ({\bf -1},\{{\bf 0}, {\bf +1}\})}
{\mu_\beta (\eta)}\;=\;\frac{4(2n_0+1)}3\, |\Lambda_L|\;=\; 
\lim_{\beta\to\infty} \frac{\Cap ({\bf 0},\{{\bf -1}, {\bf +1}\})}
{\mu_\beta (\xi)}\;.
\end{equation*}
\end{proposition}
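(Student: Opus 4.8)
The plan is to compute the capacity $\Cap(\mathbf{-1}, \{\mathbf 0, \mathbf{+1}\})$ via the Dirichlet principle, using the equilibrium potential $h_\beta(\sigma) = \mathbb{P}_\sigma[H_{\mathbf{-1}} < H_{\{\mathbf 0,\mathbf{+1}\}}]$. Since this is a finite-state reversible chain, the capacity equals the Dirichlet form $\tfrac12 \sum_{\sigma,\eta} \mu_\beta(\sigma) R_\beta(\sigma,\eta)(f(\sigma)-f(\eta))^2$ minimized over functions $f$ with $f(\mathbf{-1}) = 1$ and $f \equiv 0$ on $\{\mathbf 0,\mathbf{+1}\}$. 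The key observation is that the exponential rate of the capacity is governed by the communication height between $\mathbf{-1}$ and the rest, and that — by the characterization of critical configurations already available through Proposition \ref{mt4} — this saddle is realized precisely at the configurations of $\mathfrak R^l$ (a near-critical $n_0\times(n_0+1)$ droplet of $0$-spins inside the sea of $-1$, with one extra protuberance). So $\mu_\beta(\eta)$ for $\eta\in\mathfrak R^l$ captures the dominant exponential scale, and the content of the proposition is the computation of the sub-exponential prefactor $\tfrac{4(2n_0+1)}{3}|\Lambda_L|$.

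The key steps, in order: (i) Establish the upper bound $\Cap(\mathbf{-1},\{\mathbf 0,\mathbf{+1}\}) \le (1+o(1)) \tfrac{4(2n_0+1)}{3}|\Lambda_L|\,\mu_\beta(\eta)$ by exhibiting a test function $f$ in the Dirichlet principle. The natural choice is $f(\sigma) = g(N(\sigma))$ where $N(\sigma)$ counts, roughly, the ``progress'' toward a supercritical droplet; one tunes $f$ to be essentially a harmonic interpolation near the bottleneck and constant far from it, so that only transitions across the $\mathfrak R^l$-level contribute. (ii) Establish the matching lower bound using the dual (Thomson / flow) variational principle: construct a unit flow from $\mathbf{-1}$ to $\{\mathbf 0,\mathbf{+1}\}$ whose energy is at most $(1+o(1))$ times the reciprocal, routing mass through the canonical droplet-growth paths; equivalently, use the monotonicity and collapsing arguments from Section \ref{sec2} to reduce to a near-one-dimensional birth-and-death computation through the neck. (iii) Compute the combinatorial prefactor: the factor $|\Lambda_L|$ comes from the translational freedom of the critical droplet, and $\tfrac{4(2n_0+1)}{3}$ arises from counting the entrance/exit channels of the critical set — the number of places the protuberance can sit on the long side, the two orientations of the $n_0\times(n_0+1)$ rectangle, together with the factor $\tfrac13$ from the Metropolis rates / normalization of the local saddle structure. (iv) Run the identical argument with the roles of $\mathbf{-1}$ and $\mathbf 0$ exchanged (droplet of $+1$-spins in a sea of $0$) to obtain the second equality; the symmetry of the Hamiltonian under the relevant spin relabeling makes this essentially verbatim.

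I expect the main obstacle to be step (i)/(ii): producing test functions and test flows sharp enough that the upper and lower bounds on the capacity match to leading order, rather than merely agreeing on the exponential scale. This requires a careful analysis of the energy landscape in a neighborhood of $\mathfrak R^l$ — identifying exactly which configurations lie at the communication height, verifying that the ``valley'' structure around the saddle is a simple one-step bottleneck (so that the effective resistance is that of a short series-parallel network), and controlling the contribution of all configurations not at the saddle so they contribute only $o(1)$. The combinatorics of step (iii) — pinning down the constant $\tfrac{4(2n_0+1)}{3}$ — is delicate but mechanical once the local picture near the saddle is fixed; it amounts to enumerating the critical configurations in $\mathfrak R^l$ up to the relevant symmetries and summing the Metropolis weights of the connecting edges. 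Finally, one uses the elementary capacity identity and the estimate $\theta_\beta \sim \mu_\beta(\mathbf{-1})/\Cap(\mathbf{-1},\{\mathbf 0,\mathbf{+1}\})$ from \eqref{203} together with Proposition \ref{0b1} to package the result, but that last bookkeeping is immediate.
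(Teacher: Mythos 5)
Your route---Dirichlet principle with a test function for the upper bound, Thomson/flow principle for the lower bound, then a combinatorial count at the saddle---is the classical BEGK-style sharp-capacity computation. It is viable here, but it is genuinely different from what the paper does: the paper never invokes a variational principle to pin down the prefactor. Instead it uses the strong Markov property at the hitting time of the boundary set $\mf B^+$ (configurations with $n_0(n_0+1)$ or $n_0(n_0+1)+1$ spins different from $-1$) to write
\begin{equation*}
\Cap({\bf -1},\{{\bf 0},{\bf +1}\}) \;=\; \Cap({\bf -1},\mf B^+)
\sum_{\sigma\in\mf B^+}\bb P_{\bf -1}[H_\sigma=H_{\mf B^+}]\,
\bb P_\sigma[H_{\{{\bf 0},{\bf +1}\}}<H_{\bf -1}]\;,
\end{equation*}
shows by reversibility and a local jump-rate analysis that the hitting distribution on $\mf B^+$ is asymptotically uniform on the critical set $\mf R^a$, obtains $\Cap({\bf -1},\mf B^+)/\mu_\beta(\sigma^*)\to|\mf R^a|$ essentially for free from the normalization of that distribution, and then evaluates the escape probabilities $\bb P_\sigma[H_{\{{\bf 0},{\bf +1}\}}<H_{\bf -1}]$, which converge to $1/2$ or $2/3$ according to whether the protuberance sits at a corner or in the interior of a long side (and to $0$ for short-side attachments). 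The constant is then $(1/2)|\mf R^{lc}|+(2/3)|\mf R^{li}|$ with $|\mf R^{lc}|=8|\Lambda_L|$ and $|\mf R^{li}|=4(n_0-1)|\Lambda_L|$. The paper's route buys a shorter argument that sidesteps the construction of matching test functions and flows; yours buys generality and is the standard fallback when no such clean last-exit decomposition is available.

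Two cautions on your sketch. First, the factor $4(2n_0+1)/3$ does not come from a single-edge bottleneck normalized by Metropolis rates: each critical configuration has several degenerate downhill exits in both directions (retract the protuberance, or extend it along the side of the rectangle), so the neighborhood of the saddle is a small series-parallel network whose effective conductances produce precisely the weights $1/2$ and $2/3$ above. A sharp test flow must be split over all these exits, and the matching upper-bound test function must be constant on each downhill basin; you correctly flagged this as the main obstacle, but be aware that this is where the arithmetic of the constant actually lives. Second, invoking Proposition \ref{mt4} to identify the critical configurations would be circular in the paper's logical order (it is proved with the same machinery); the identification has to come from the isoperimetric estimates (Assertions \ref{as5} through \ref{lem1}). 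Finally, the closing appeal to $\theta_\beta$ and Proposition \ref{0b1} is unnecessary: the statement concerns only the capacity, and $\theta_\beta$ is defined in terms of it.
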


The first identity of this proposition is proved in Section \ref{sec3}
and the second one in Section \ref{sec4}. 

\begin{proposition}
\label{mt2}
The expected time to visit the ground state starting from ${\bf -1}$
and from ${\bf 0}$ are given by
\begin{equation*}
\lim_{\beta\to\infty} \frac 1{\theta_\beta} 
\, \bb E_{\bf -1} [ H_{\bf +1}] \; = \; 2\;, \quad
\lim_{\beta\to\infty} \frac 1{\theta_\beta} \,
 \bb E_{\bf 0} [ H_{\bf +1}] \; =\; 1\;.
\end{equation*}
\end{proposition}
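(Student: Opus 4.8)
\emph{Proof (outline).} The plan is to transform both expectations into the potential–theoretic formula for mean hitting times and to estimate the resulting sums with the help of the energy landscape. First I would use the strong Markov property at time $H_{\{{\bf 0},{\bf +1}\}}$: started from ${\bf -1}$ the chain sits at ${\bf 0}$ or at ${\bf +1}$ at that instant and $\bb E_{\bf +1}[H_{\bf +1}]=0$, so
\begin{equation*}
\bb E_{\bf -1}[H_{\bf +1}] \;=\; \bb E_{\bf -1}\big[H_{\{{\bf 0},{\bf +1}\}}\big]\;+\;\bb P_{\bf -1}\big[H_{\bf 0}<H_{\bf +1}\big]\,\bb E_{\bf 0}[H_{\bf +1}]\;.
\end{equation*}
By Proposition~\ref{0b1} the probability tends to $1$, so it is enough to prove $\bb E_{\bf -1}[H_{\{{\bf 0},{\bf +1}\}}]=(1+o(1))\,\theta_\beta$ and $\bb E_{\bf 0}[H_{\bf +1}]=(1+o(1))\,\theta_\beta$.

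For both I would apply the identity for reversible chains recalled in Section~\ref{sec2}, $\bb E_\eta[H_{\ms A}]=\Cap(\eta,\ms A)^{-1}\sum_\sigma\mu_\beta(\sigma)\,h_{\eta,\ms A}(\sigma)$, where $h_{\eta,\ms A}(\sigma)=\bb P_\sigma[H_\eta<H_{\ms A}]$ is the equilibrium potential. Taking $\eta={\bf -1}$, $\ms A=\{{\bf 0},{\bf +1}\}$ and using the definition~\eqref{203} of $\theta_\beta$, the first identity reduces to
\begin{equation*}
\sum_\sigma\mu_\beta(\sigma)\,h_{{\bf -1},\{{\bf 0},{\bf +1}\}}(\sigma)\;=\;(1+o(1))\,\mu_\beta({\bf -1})\;.
\end{equation*}
The lower bound is the single term $\sigma={\bf -1}$. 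For the upper bound the terms $\sigma\in\{{\bf 0},{\bf +1}\}$ vanish, and for $\sigma\notin\{{\bf -1},{\bf 0},{\bf +1}\}$ I would split according to the sign of $\mathbb H(\sigma)-\mathbb H({\bf -1})$: since $\mathbb H$ takes finitely many values there is a $\delta>0$ (in fact $\delta=4-h$, the cost of turning one spin of ${\bf -1}$ into a $0$) with $\mathbb H(\sigma)\ge\mathbb H({\bf -1})+\delta$ whenever $\mathbb H(\sigma)>\mathbb H({\bf -1})$, so those terms contribute at most $|\Omega|\,e^{-\beta\delta}\mu_\beta({\bf -1})$.

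The terms with $\mathbb H(\sigma)\le\mathbb H({\bf -1})$ are the delicate ones. Here I would use $h_{{\bf -1},\{{\bf 0},{\bf +1}\}}(\sigma)\le\Cap(\sigma,{\bf -1})/\Cap(\sigma,\{{\bf 0},{\bf +1}\})$, bound $\Cap(\sigma,{\bf -1})$ from above by the Dirichlet form of the indicator of $\{\mathbb H\le\mathbb H({\bf -1})\}\setminus\{{\bf -1}\}$ — every bond leaving this set has an endpoint of energy at least $\mathbb H({\bf -1})+\delta$, because every neighbour of ${\bf -1}$ has energy at least $\mathbb H({\bf -1})+4-h$, which gives $\Cap(\sigma,{\bf -1})\le C\,e^{-\beta\delta}\mu_\beta({\bf -1})$ uniformly — and bound $\Cap(\sigma,\{{\bf 0},{\bf +1}\})$ from below by the resistance of an optimal path, $\Cap(\sigma,\{{\bf 0},{\bf +1}\})\ge|\Omega|^{-1}Z_\beta^{-1}e^{-\beta\Phi(\sigma,\{{\bf 0},{\bf +1}\})}$, with $\Phi$ the communication height. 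The geometric input needed — that such a $\sigma$ lies outside the (trivial) valley of ${\bf -1}$ and is joined to $\{{\bf 0},{\bf +1}\}$ by a reference path rising by at most a constant $C_1<4-h$ above $\mathbb H(\sigma)$ — is exactly the kind of information established by the analysis of the energy landscape in Sections~\ref{sec3}--\ref{sec4}; it makes each such term at most $C\,e^{-\beta(\delta-C_1)}\mu_\beta({\bf -1})$ and closes the first identity. I expect this to be the main obstacle: it is where the Blume–Capel geometry genuinely enters. For the ${\bf -1}$-transition it is comparatively light, since ${\bf -1}$ being an isolated high-energy local minimum forces its valley to contain no configuration of energy $\le\mathbb H({\bf -1})$ besides ${\bf -1}$; for the ${\bf 0}$-transition the analogous statement is substantive.

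For $\bb E_{\bf 0}[H_{\bf +1}]$ one proceeds identically with $\eta={\bf 0}$, $\ms A=\{{\bf +1}\}$, obtaining $\sum_\sigma\mu_\beta(\sigma)\,h_{{\bf 0},{\bf +1}}(\sigma)=(1+o(1))\mu_\beta({\bf 0})$ (the gap above $\mathbb H({\bf 0})$ is again $4-h$, the cost of turning one spin of ${\bf 0}$ into a $+1$, and every configuration of energy $\le\mathbb H({\bf 0})$ other than ${\bf 0}$ lies in the valley of ${\bf +1}$, reached by a path rising by less than $4-h$), hence $\bb E_{\bf 0}[H_{\bf +1}]=(1+o(1))\mu_\beta({\bf 0})/\Cap({\bf 0},{\bf +1})$. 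It then remains to identify $\mu_\beta({\bf 0})/\Cap({\bf 0},{\bf +1})$ with $(1+o(1))\theta_\beta$. Since the barrier from ${\bf 0}$ to ${\bf -1}$ is of order $h\,|\Lambda_L|$ while $\Phi({\bf 0},{\bf +1})-\mathbb H({\bf 0})$ is bounded, $\Cap({\bf 0},{\bf -1})$ is super-exponentially smaller than $\Cap({\bf 0},{\bf +1})$, so $\Cap({\bf 0},{\bf +1})=(1+o(1))\Cap({\bf 0},\{{\bf -1},{\bf +1}\})$. By Proposition~\ref{mt3}, $\Cap({\bf -1},\{{\bf 0},{\bf +1}\})=(1+o(1))\tfrac{4(2n_0+1)}{3}|\Lambda_L|\,\mu_\beta(\eta)$ and $\Cap({\bf 0},\{{\bf -1},{\bf +1}\})=(1+o(1))\tfrac{4(2n_0+1)}{3}|\Lambda_L|\,\mu_\beta(\xi)$ for $\eta\in\mf R^l$, $\xi\in\mf R^l_0$; and since $\eta$ and $\xi$ arise from ${\bf -1}$ and ${\bf 0}$ by growing congruent droplets of $0$- and $+1$-spins, $\mathbb H(\eta)-\mathbb H({\bf -1})=\mathbb H(\xi)-\mathbb H({\bf 0})$, whence $\mu_\beta(\xi)/\mu_\beta(\eta)=\mu_\beta({\bf 0})/\mu_\beta({\bf -1})$. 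Combining these, $\Cap({\bf 0},{\bf +1})=(1+o(1))\,\tfrac{\mu_\beta({\bf 0})}{\mu_\beta({\bf -1})}\,\Cap({\bf -1},\{{\bf 0},{\bf +1}\})$, i.e.\ $\mu_\beta({\bf 0})/\Cap({\bf 0},{\bf +1})=(1+o(1))\theta_\beta$, which completes the argument. The strong Markov reduction, the potential-theoretic identity and this capacity bookkeeping are routine; the landscape estimates of the previous paragraph are the real content.
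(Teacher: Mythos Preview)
Your outline is correct and the argument closes, but it is organised differently from the paper's proof. The paper does \emph{not} use the strong Markov decomposition at $H_{\{{\bf 0},{\bf +1}\}}$. Instead it applies Lemma~\ref{mest1} (whose hypothesis is checked in Assertion~\ref{as15}) directly to $\bb E_{\bf -1}[H_{\bf +1}]$ and to $\bb E_{\bf 0}[H_{\bf +1}]$, which reduces each sum to the three points of $\ms M$. For $\bb E_{\bf 0}[H_{\bf +1}]$ this is immediate since $\mu_\beta({\bf -1})\prec\mu_\beta({\bf 0})$. For $\bb E_{\bf -1}[H_{\bf +1}]$ the reduced sum is $\mu_\beta({\bf -1})+\mu_\beta({\bf 0})\,\bb P_{\bf 0}[H_{\bf -1}<H_{\bf +1}]$; the cross term is \emph{not} negligible, and the paper evaluates it via a reversibility identity (Assertion~\ref{as66}) as $(1+o(1))\mu_\beta({\bf -1})$, then uses $\Cap({\bf -1},{\bf +1})\sim\Cap({\bf -1},\{{\bf 0},{\bf +1}\})$ (Assertion~\ref{as64}) to get $2\theta_\beta$. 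Your route sidesteps this cross term entirely by splitting into $\bb E_{\bf -1}[H_{\{{\bf 0},{\bf +1}\}}]$ and $\bb E_{\bf 0}[H_{\bf +1}]$, at the price of doing the ``off-$\ms M$'' reduction by hand (essentially reproving Lemma~\ref{mest1} inline). Both rely on the same landscape fact, namely that from any $\sigma\notin\ms M$ one reaches $\ms M$ by a path rising at most $2-h$ (Assertion~\ref{as15}); your observation that for $\sigma$ with $\bb H(\sigma)\le\bb H({\bf -1})$ (resp.\ $\le\bb H({\bf 0})$) this path cannot land at ${\bf -1}$ (resp.\ at ${\bf 0}$ or ${\bf -1}$) because the last step would cost $\ge 4-h$ is exactly what makes your bound go through.

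Two small inaccuracies in your writeup, neither of which breaks the argument: (i) the claim that the minimal energy gap above $\bb H({\bf 0})$ equals $4-h$ refers only to neighbours of ${\bf 0}$, not to the full state space; only positivity of some gap is needed, and that is automatic on a finite space. (ii) The barrier from ${\bf 0}$ to ${\bf -1}$ is not ``of order $h|\Lambda_L|$''; rather, the saddle height $\bb H(\xi_{{\bf 0},{\bf -1}})$ exceeds $\bb H(\xi_{{\bf 0},{\bf +1}})$ by $\bb H({\bf -1})-\bb H({\bf 0})=h|\Lambda_L|$, which is what gives $\Cap({\bf 0},{\bf -1})\prec\Cap({\bf 0},{\bf +1})$ and hence \eqref{603}.
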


We have seen in Proposition \ref{0b1} that starting from $\bf -1$ the
process reaches $\bf 0$ before visiting $\bf +1$. In contrast, the
next identity shows that the main contribution to the expectation $\bb
E_{\bf -1} [ H_\textbf{0}]$ comes from the event in which the process,
starting from $\bf -1$, first visits $\bf +1$, remains there for a
very long time and then reaches $\bf 0$. We have that
\begin{equation}
\label{eq04}
\frac 1{\theta_\beta} \, 
\bb E_{\bf -1} [ H_\textbf{0}] \; = \; \big(b +o(1)\big) \, 
\frac{\mu_\beta(\textbf{+1})}{\mu_\beta(\textbf{0})}\,
\bb P_{{\bf -1}} [H_\textbf{+1} <H_{\textbf{0}}] \;,
\end{equation}
where $o(1)$ is an expression which vanishes as $\beta\uparrow\infty$,
and
\begin{equation}
\label{eq03}
\lim_{\beta\to\infty}  \frac{\mu_\beta(\textbf{+1})}{\mu_\beta(\textbf{0})}\,
\bb P_{{\bf -1}} [H_\textbf{+1} <H_{\textbf{0}}] \;=\; \infty\;.
\end{equation}

A self-avoiding path $\gamma$ from $\ms A$ to $\ms B$, $\ms A$, $\ms
B\subset \Omega$, $\ms A\cap \ms B = \varnothing$, is a sequence of
configurations $(\xi_0, \xi_1, \dots, \xi_n)$ such that $\xi_0\in \ms
A$, $\xi_n\in \ms B$, $\xi_j\not \in \ms A\cup \ms B$, $0<j<n$, $\xi_i
\not = \xi_j$, $i\not =j$, $R_\beta(\xi_i,\xi_{i+1})>0$, $0\le i
<n$. Denote by $\Gamma_{\ms A, \ms B}$ the set of self-avoiding paths
from $\ms A$ to $\ms B$ and let
\begin{equation}
\label{204}
\bb H(\ms A, \ms B) \;:=\; \min_{\gamma\in \Gamma_{\ms A, \ms B}} \bb
H (\gamma)\;, \quad
\bb H(\gamma) \;:=\; \max_{0\le i\le n} \bb H (\xi_i)\;.
\end{equation}  

Let $\ms M = \{{\bf -1}, {\bf 0}, {\bf +1}\}$ be the set of ground
configurations of the main wells, and let $\ms V_{\eta}$, $\eta\in \ms
M$, be a neighborhood of the configuration $\eta$. We assume that all
configurations $\sigma\in\ms V_\eta$, $\sigma\not = \eta$, fulfill the
conditions
\begin{equation}
\label{cond}
\bb H(\sigma) \;>\; \bb H(\eta)   \;, \quad
\bb H(\eta,\sigma) \,-\, \bb H(\eta) \;<\; \bb H({\bf -1}, \{ {\bf 0},
{\bf +1}\}) \,-\, \bb H({\bf -1})
\;.
\end{equation}
The right hand side in the second condition represents the energetic
barrier the chain needs to surmount to reach the set $\{{\bf 0}, {\bf
  +1}\}$ starting from $\bf -1$, while the left hand side represents
the energetic barrier to go from $\eta$ to $\sigma$.

It follows from Proposition \ref{mt3} that 
\begin{equation}
\label{200}
\bb H ({\bf -1},\{{\bf 0}, {\bf +1}\}) -
\bb H({\bf -1}) \;=\; \bb H ({\bf 0},\{{\bf -1}, {\bf +1}\})
- \bb H({\bf 0})\;.
\end{equation}
We may therefore replace the expression on the right hand side of
\eqref{cond} by the one on the right hand side of the previous
formula.

Clearly, $\ms V_\eta = \{\eta\}$, $\eta\in \ms M$, is an example of
neighborhoods satisfying \eqref{cond}.  Let $\ms V$ be the union of
the three neighborhoods, $\ms V = \cup_{\eta\in\ms M} \ms V_\eta$, and
let $\pi : \ms M \to \{-1,0,1\}$ be the application which provides the
magnetization of the states $\bf -1$, $\bf 0$, $\bf +1$: $\pi({\bf
  -1}) = -1$, $\pi({\bf 0}) = 0$, $\pi({\bf +1}) = 1$.  Denote by
$\Psi = \Psi_{\ms V} : \Omega\to \{-1,0, 1, [\beta]\}$ the projection
defined by $\Psi(\sigma) =\pi(\eta)$ if $\sigma\in \ms V_\eta$,
$\Psi(\sigma) = [\beta]$, otherwise:
\begin{equation*}
\Psi(\sigma) \;=\; \sum_{\eta \in \ms M} \pi(\eta) \, \mb 1\{\sigma \in \ms
V_\eta\} \;+\; [\beta] \, \mb 1\Big\{\sigma \not\in \bigcup_{\eta\in\ms M} \ms
V_\eta \Big\}\;.
\end{equation*}
Recall from \cite{bl4} the definition of the soft topology.

\begin{theorem}
\label{mt1b}
The speeded-up, hidden Markov chain $X_\beta(t) =
\Psi\big(\sigma(\theta_\beta t)\big)$ converges in the soft topology
to the continuous-time Markov chain $X(t)$ on $\{-1,0, 1\}$ in which
$1$ is an absorbing state, and whose jump rates are given by
\begin{equation*}
r(-1,0) \;=\;  r(0,1) \;=\; 1\;, \quad
r(-1,1) \;=\; r(0,-1)\;=\; 0\;. 
\end{equation*}
\end{theorem}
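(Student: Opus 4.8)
The plan is to deduce Theorem~\ref{mt1b} from the general theory of \cite{bl4} for finite-state reversible chains, using the three ingredients already assembled in the excerpt: the capacity estimates of Proposition~\ref{mt3}, the depth/energy-barrier condition \eqref{cond}, and the directionality statements of Propositions~\ref{0b1} and~\ref{mt4}. Concretely, one works with the trace of $\sigma_t$ on $\ms V = \ms V_{\bf -1} \cup \ms V_{\bf 0} \cup \ms V_{\bf +1}$, time-rescaled by $\theta_\beta$. The first step is to check that $\ms V$ is a valid choice of ``valley'' system in the sense of \cite{bl4}: by the first condition in \eqref{cond} each $\eta\in\ms M$ is the unique minimizer of $\bb H$ on $\ms V_\eta$, so $\mu_\beta(\ms V_\eta) = (1+o(1))\mu_\beta(\eta)$, and the measures concentrate on the three ground configurations. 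The second condition in \eqref{cond}, together with \eqref{200}, guarantees that the internal relaxation time inside each $\ms V_\eta$ is exponentially faster than $\theta_\beta$, so that on the scale $\theta_\beta$ the trace process does not ``see'' the internal structure of the neighborhoods; this is exactly the hypothesis that lets one collapse $\ms V_\eta$ to a point and pass to the soft topology.

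The second step is to identify the asymptotic jump rates of the reduced chain. By the BL formula, the limiting rate from the valley around $\eta$ to the valley around $\eta'$ is
\begin{equation*}
r(\pi(\eta),\pi(\eta')) \;=\; \lim_{\beta\to\infty} \frac{1}{\theta_\beta}\,
\frac{\mu_\beta(\eta)}{\mu_\beta(\ms V)}\;
\frac{\Cap(\ms V_\eta, \ms V \setminus \ms V_\eta)}{\mu_\beta(\ms V_\eta)}\;
\bb P^{\rm ref}_{\eta}\big[\,\text{hit } \ms V_{\eta'} \text{ before the other valleys}\,\big]\;,
\end{equation*}
where the reference/last-exit probability is taken for the trace chain. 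Since $\mu_\beta(\ms V) = (1+o(1))\mu_\beta({\bf -1})$ (the state $\bf -1$ has the highest energy among the three and hence the smallest weight — wait, one must be careful: $\bb H(\bf +1) < \bb H(\bf 0) < \bb H(\bf -1)$, so $\mu_\beta(\bf +1)$ dominates; nonetheless $\theta_\beta$ in \eqref{203} is defined with $\mu_\beta(\bf -1)$ in the numerator), the normalization is arranged precisely so that the prefactors match. Using $\Cap({\bf -1},\{{\bf 0},{\bf +1}\}) = \Cap(\ms V_{\bf -1}, \ms V\setminus\ms V_{\bf -1})(1+o(1))$, Proposition~\ref{mt3} gives $\theta_\beta^{-1}\,\mu_\beta(\bf -1)/\mu_\beta(\ms V_{\bf -1})\cdot\Cap(\ms V_{\bf -1},\ms V\setminus\ms V_{\bf -1})/\mu_\beta(\bf -1) \to 1$, and similarly for the transition out of $\bf 0$ after renormalizing by $\mu_\beta(\bf -1)/\mu_\beta(\bf 0)$, which is where the $\eqref{eq03}$-type ratios enter and one must track them honestly. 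The remaining task is to show that the last-exit probabilities are $1$ for the ``allowed'' transitions and $0$ for the forbidden ones: Proposition~\ref{0b1} (and its analogue built into Proposition~\ref{mt4}) says that starting from $\bf -1$ the chain hits $\bf 0$ before $\bf +1$ with probability tending to $1$, which gives $r(-1,1)=0$ and makes the full escape rate from the $\bf -1$-valley flow into the $\bf 0$-valley; the analogous statement for $\bf 0$ versus $\bf +1$ (second half of Proposition~\ref{mt4}, completed by the capacity computation of Proposition~\ref{mt3} in Section~\ref{sec4}) gives $r(0,-1)=0$ and routes the escape from the $\bf 0$-valley into $\bf +1$. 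That $\bf +1$ is absorbing on this time scale is immediate because its energy is strictly lowest, so the barrier to leave it is strictly higher than $\bb H({\bf -1},\{{\bf 0},{\bf +1}\}) - \bb H({\bf -1})$ and the corresponding rate vanishes after division by $\theta_\beta$.

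The third step is bookkeeping: assemble the above into the statement that $X_\beta(t) = \Psi(\sigma(\theta_\beta t))$ converges in the soft topology to $X(t)$. Here one invokes the main convergence theorem of \cite{bl4} verbatim once its hypotheses — (i) concentration of $\mu_\beta$ on $\ms M$, (ii) the separation-of-time-scales condition encoded in \eqref{cond}, (iii) existence of the limiting rates just computed — have been verified. One also needs to note that the ``extra'' symbol $[\beta]$ assigned to $\sigma\notin\ms V$ plays the role of the auxiliary state in the soft topology, and that the fraction of time spent outside $\ms V$ on the scale $\theta_\beta$ is negligible, which again follows from the capacity bounds since $\mu_\beta(\Omega\setminus\ms V)/\mu_\beta(\bf +1)$ is controlled and the excursions outside $\ms V$ are short.

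I expect the main obstacle to be not any single estimate — those are essentially Propositions~\ref{0b1}--\ref{mt3} — but the careful verification that the generic hypotheses of the BL reduction theorem in \cite{bl4} are met for the chosen partition $\{\ms V_\eta\}_{\eta\in\ms M}$, in particular controlling the behavior in the complement $\Omega\setminus\ms V$ and checking that no intermediate valley of comparable depth has been overlooked. The energy landscape has the two metastable wells $\bf -1$, $\bf 0$ and the stable well $\bf +1$; one must make sure \eqref{cond}, via \eqref{200}, genuinely rules out any other configuration being a ``third-type'' valley on the relevant scale. Once that is in place, the identification of the rates is a direct substitution of Proposition~\ref{mt3} and the directionality Propositions~\ref{0b1} and~\ref{mt4} into the BL formula, and the convergence in the soft topology follows.
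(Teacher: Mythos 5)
Your overall strategy is the same as the paper's: pass to the trace process on the valleys, identify the limiting jump rates through capacities and the hitting-order statements of Propositions \ref{0b1} and \ref{mt4}, show the time spent outside $\ms V$ is negligible on scale $\theta_\beta$, and invoke a general soft-topology convergence theorem (the paper uses Theorem 5.1 of \cite{l-soft} together with a hand-made coupling, Lemma \ref{coupling}, between the jump times of $\psi(\sigma^{\ms V}(t))$ and those of the trace on the three-point set $\ms M$, rather than citing \cite{bl4} wholesale). So the plan is sound, but two steps as you have written them are not correct.

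First, your rate formula is garbled. Since the chain is \emph{speeded up} by $\theta_\beta$, the rates of the rescaled trace chain are \emph{multiplied} by $\theta_\beta$; the correct expression (from \cite[Proposition 6.1]{bl2} combined with \eqref{mf01}) is
\begin{equation*}
r_\beta(\eta,\xi) \;=\; \theta_\beta\, \frac{\Cap(\eta,\ms M\setminus\{\eta\})}{\mu_\beta(\eta)}\;
\bb P_{\eta}\big[H_\xi < H_{\ms M\setminus\{\eta,\xi\}}\big]\;,
\end{equation*}
with no factor $\mu_\beta(\eta)/\mu_\beta(\ms V)$. Your version, with $\theta_\beta^{-1}$ in front and the extra ratio, evaluates to $\theta_\beta^{-2}\,\mu_\beta({\bf -1})/\mu_\beta(\ms V)$ for $\eta={\bf -1}$ and does not converge to $1$; the statement you then assert, $\theta_\beta^{-1}\mu_\beta({\bf -1})^{-1}\Cap(\ms V_{\bf -1},\ms V\setminus\ms V_{\bf -1})\cdot\mu_\beta({\bf -1})/\mu_\beta(\ms V_{\bf -1})\to 1$, is likewise off by $\theta_\beta^2$. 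This is presumably a slip rather than a conceptual error, but as written the rates do not come out right. Also, the fact that the rate out of ${\bf 0}$ equals $1$ on the \emph{same} scale $\theta_\beta$ rests on the equality of energy barriers \eqref{200} (equivalently, the two limits in Proposition \ref{mt3} being taken against $\mu_\beta(\eta)$, $\eta\in\mf R^l$, and $\mu_\beta(\xi)$, $\xi\in\mf R^l_0$, with $\bb H(\eta)-\bb H({\bf -1})=\bb H(\xi)-\bb H({\bf 0})$); the ``renormalization by $\mu_\beta({\bf -1})/\mu_\beta({\bf 0})$'' and the \eqref{eq03}-type ratios you invoke are not the relevant quantities here.

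Second, your justification for the negligibility of the time spent in $\Delta=\Omega\setminus\ms V$ — that $\mu_\beta(\Delta)/\mu_\beta({\bf +1})$ is controlled — only works once the chain has reached the deepest well. Starting from ${\bf -1}$ or ${\bf 0}$ the natural stationarity comparison is against $\mu_\beta({\bf -1})$ or $\mu_\beta({\bf 0})$, and $\mu_\beta(\Delta)$ is \emph{not} small relative to these: $\Delta$ contains configurations (large droplets of $+1$ spins) with energy far below $\bb H({\bf 0})$. The paper's Lemma \ref{negl} handles this by splitting the trajectory at $H_{\{{\bf -1},{\bf +1}\}}$ (resp.\ $H_{\{{\bf 0},{\bf +1}\}}$) and, for the portion before that hitting time, weighting $\mu_\beta(\sigma)$ by the harmonic function $V(\sigma)=\bb P_\sigma[H_{\bf 0}<H_{\{{\bf -1},{\bf +1}\}}]$, which is shown via \eqref{mf02} to kill the contribution of the deep configurations of $\Delta^*$. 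Without an argument of this type your negligibility claim, and hence the passage to the soft topology, is incomplete.
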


\begin{remark}
\label{rm3}
Denote by $\ms B_{\eta}$, $\eta\in \ms M$, the basin of attraction of
$\eta$:
\begin{equation*}
\ms B_{\eta} \;=\; \{\sigma : \lim_{\beta\to\infty} 
\bb P_\sigma[H_{\ms M \setminus \{\eta\}} < H_{\eta}] =0 \}\;.
\end{equation*}
We prove in \eqref{basin} that $\ms V_\eta\subset \ms B_{\eta}$.
\end{remark}

\section{Metastability of reversible Markov chains}
\label{sec2}

We present in this section some results on reversible Markov chains.
Consider two nonnegative sequences $(a_N : N\ge 1)$, $(b_N : N\ge
1)$. The notation $a_N \prec b_N$ (resp. $a_N \preceq b_N$) indicates
that $\limsup_{N\to\infty} a_N/b_N= 0$ (resp. $\limsup_{N\to\infty}
a_N/b_N < \infty$), while $a_N \approx b_N$ means that $a_N \preceq
b_N$ and $b_N \preceq a_N$.

A set of nonnegative sequences $(a^r_N: N\ge 1)$, $r\in \mf R$, is
said to be \emph{ordered} if for all $r\not = s\in\mf R$ $\arctan
(a^r_N / a^s_N)$ converges.

Fix a finite set $E$.  Consider a sequence of continuous-time,
$E$-valued Markov chains $\{\eta^N_t : t\ge 0\}$, $N\ge 1$.  We
assume, throughout this section, that the chain $\eta^N_t$ is
\emph{irreducible}, that the unique stationary state, denoted by
$\mu_N$, is \emph{reversible}, and that the jump rates of the chain
$\eta^N_t$, denoted by $R_N(x,y)$, $x\not = y\in E$, satisfy the
following hypothesis. Let $\bb Z_+ = \{0, 1, 2, \dots \}$, let $\bb B$
be the bonds of $E$: $\bb B = \{(x,y) \in E\times E: x\not = y\in
E\}$, and let $\mf A_m$, $m\ge 1$, be the set of functions $k:\bb B
\to \bb Z_+$ such that $\sum_{(x,y)\in\bb B} k(x,y) =m$.

\begin{assumption}
\label{mhyp} 
We assume that for every $m\ge 1$ the set of sequences
\begin{equation*}
\Big\{ \prod_{(x,y)\in\bb B} R_N(x,y)^{k(x,y)} : N \ge 1\Big\}
\;,\quad k\in\mf A_m
\end{equation*}
is ordered.
\end{assumption}

This assumption is slightly weaker than the hypotheses (2.1), (2.2) in
\cite{bl4}, but strong enough to derive all results presented in that
article.  It is also not difficult to check that the Blume-Capel model
introduced in the previous section fulfills Assumption \ref{mhyp}.

We adopt here similar notation to the one introduced in the previous
section. For example, $\lambda_N(x)$ represents the holding rates of
the Markov chain $\eta^N_t$, $p_N(x,y)$, $x$, $y\in E$, the jump
probabilities, and $M_N(x)=\mu_N(x) \lambda_N(x)$ a stationary measure
of the embedded discrete-time Markov chain.  Analogously, $\bb P_x=\bb
P^N_x$, $x\in E$, represents the distribution of the Markov chain
$\eta^N_t$ starting from $x$ and $\bb E_x$ the expectation with
respect to $\bb P_x$.

Denote by $H_A$ (resp. $H^+_A$), $A\subset E$, the hitting time of
(resp. the return time to) the set $A$, introduced in \eqref{201}, and
by $\Cap (A,B)$ the capacity between two disjoint subsets $A$, $B$ of
$E$, as defined in \eqref{202}.

The following identity will be used often. Let $A$, $B$ be two
disjoint subsets of $E$ and let $x$ be a point which does not belong
to $A\cup B$. We claim that
\begin{equation}
\label{mf01}
\bb P_x [ H_A < H_B] \;=\; \frac{\bb P_x [ H_A <
  H^+_{B\cup\{x\}}]}
{\bb P_x [ H_{A\cup B} < H^+_x ]}\;\cdot
\end{equation}
To prove this identity, intersect the event $\{H_A<H_B\}$ with the set
$\{H^+_x < H_{A\cup B}\}$ and its complement, and then apply the
strong Markov property to get that
\begin{equation*}
\bb P_x [ H_A < H_B] \;=\; \bb P_x [ H^+_x < H_{A\cup B}] 
\, \bb P_x [ H_A < H_B]  \;+\; \bb P_x [ H_A <
H^+_{B\cup\{x\}}]\; .
\end{equation*}
To obtain \eqref{mf01} it remains to subtract the first term on the
right hand side from the left hand side. 

Multiply and divide the right hand side of \eqref{mf01} by $M(x)$
and recall the definition of the capacity to obtain that
\begin{equation}
\label{mf05}
\bb P_x [ H_A < H_B] \;=\; \frac{M(x) \bb P_x [ H_A <
  H^+_{B\cup\{x\}}]} {\Cap (x, A\cup B)}
\;\le\; \frac{M(x) \bb P_x [ H_A <
  H^+_{x}]} {\Cap (x, A\cup B)}\;\cdot
\end{equation}
Hence, by definition of capacity and since, by \cite[Lemma 2.2]{gl14},
the capacity is monotone.
\begin{equation}
\label{mf02}
\bb P_x [ H_A < H_B] 
\;\le\; \frac{\Cap (x, A)} {\Cap (x, A\cup B)}
\;\le\; \frac{\Cap (x, A)} {\Cap (x, B)}\;\cdot
\end{equation}
\smallskip

For any disjoint subsets $A$ and $B$ of $E$,
\begin{equation}
\label{mf03}
\Cap (A, B) \;\approx\; 
\max_{x\in A}  \max_{y\in B} \Cap (x, y)\;. 
\end{equation}
Indeed, on the one hand, by monotonicity of the capacity,
\begin{equation*}
\Cap (A, B) \;\ge\; \max_{x\in A} 
\Cap (x, B) \;\ge\; \max_{x\in A} \max_{y\in B} 
\Cap (x, y) \;.
\end{equation*}
On the other hand, by definition of the capacity,
\begin{equation*}
\Cap (A, B) \; =\; \sum_{y\in B} M(y) \bb P_y[
H_{A} < H_{B}^+] 
\;\le\; \sum_{x\in A}
\sum_{y\in B} M(y) \bb P_y[ H_{x} < H_{B}^+] \;,
\end{equation*}
Therefore,
\begin{equation}
\label{mf06}
\Cap (A, B) \; \le \; \sum_{x\in A} \Cap (x, B)
\;\le\; |A|\, \max_{x\in A} \Cap (x, B) \;,
\end{equation}
where $|A|$ stands for the cardinality of $A$. Repeating this
argument for $B$ in place of $A$, we conclude the proof of
\eqref{mf03}.

Let $G_N: E\times E\to \bb R_+$ be given by $G_N(x,y) = \mu_N(x)
R_N(x,y)$ and note that $G_N$ is symmetric. In the electrical network
interpretation of reversible Markov chains, $G_N(x,y)$ represents the
conductance of the bond $(x,y)$. Recall that a self-avoiding path
$\gamma$ from $A$ to $B$, $A$, $B\subset E$, $A\cap B = \varnothing$,
is a sequence of sites $(x_0, x_1, \dots, x_n)$ such that $x_0\in A$,
$x_n\in B$, $x_j\not \in A\cup B$, $0<j<n$, $x_i \not = x_j$, $i\not
=j$, $R_N(x_i,x_{i+1})>0$, $0\le i <n$. Denote by $\Gamma_{A,B}$ the
set of self-avoiding paths from $A$ to $B$ and let
\begin{equation*}
G_N(A,B) \;:=\; \max_{\gamma\in \Gamma_{A,B}} G_N(\gamma)\;, \quad
G_N(\gamma) \;:=\; \min_{0\le i<n} G_N(x_i,x_{i+1})\;.
\end{equation*}
By \cite[Lemma 4.2]{bl4}, for every disjoint subsets $A$, $B$ of $E$,
the limit
\begin{equation}
\label{303}
\lim_{N\to\infty} \frac {\Cap (A,B)}{G_N(A,B)}   \;\;\text{exists and
  belongs to $(0,\infty)$}\;.
\end{equation}

\begin{remark}
\label{rm2}
Suppose that the stationary state $\mu_N$ is a Gibbs measure
associated to an energy $\bb H$, and that we are interested in the
Metropolis dynamics: $\mu_N(x) = Z^{-1}_N \exp\{-N \bb H(x)\}$, where
$Z_N$ is the partition function, $R_N(x,y) = \exp\{-N [\bb H(y) - \bb
H(x)]_+\}$. In this context, $G_N(x,y) = Z^{-1}_N \exp\{-N \max [\bb
H(x), \bb H(y)] \} $. In particular, for a path $\gamma=(x_0, x_1,
\dots, x_n)$, $G_N(\gamma) = Z^{-1}_N \exp\{-N \max_i H(x_i)\}$, and
for two disjoint subsets $A$, $B$ of $E$,
\begin{align*}
G_N (A,B) \; &=\; \frac 1{Z_N} \exp\Big\{ -N \min_{\gamma\in \Gamma_{A,B}}
\max_{x\in\gamma} \bb H(x) \Big\} \\
& =\; \frac 1{Z_N} \exp\big\{ - N \bb H(x_{A,B})
\big\} \;=\; \mu_N(x_{A,B})\;, 
\end{align*}
where $x_{A,B}$ represents the configuration with highest energy in
the optimal path joining $A$ to $B$.
\end{remark}

\begin{lemma}
\label{mest1}
Let $E_1$ be a subset of $E$. Assume that for every $y\not
\in E_1 $, $z\in E_1$ such that $\mu(z) \preceq \mu(y)$, 
\begin{equation}
\label{ehyp1}
\frac {\Cap(y,z)}{\mu(z)} \;\prec\; \frac {\Cap(y , E_1)}{\mu(y)}
\;\cdot   
\end{equation}
Then, for any $B\subset E_1$, $x\in E_1\setminus B$,  
\begin{equation*}
\bb E_x [H_B] \;=\; \big (1 + o(1) \big) \, \frac{1}{\Cap(x, B)} 
\sum_{y\in E_1} \mu(y)  \, \bb P_y [ H_x < H_B ]\;.
\end{equation*}
\end{lemma}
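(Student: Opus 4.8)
The plan is to compute $\bb E_x[H_B]$ via the standard potential-theoretic formula expressing the hitting time as a weighted sum of equilibrium-potential values, and then to show that under hypothesis \eqref{ehyp1} the only states that contribute to leading order are those in $E_1$. First I would recall the classical identity (see e.g.\ BEGK or \cite{bl4}): for the reversible chain, writing $h(y) = \bb P_y[H_x < H_B]$ for the equilibrium potential between $\{x\}$ and $B$,
\begin{equation}
\label{pp01}
\bb E_x[H_B] \;=\; \frac{1}{\Cap(x,B)} \sum_{y\in E} \mu(y)\, \bb P_y[H_x < H_B]\;.
\end{equation}
This is exact and holds with the sum over all of $E$. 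The content of the lemma is therefore that $\sum_{y\notin E_1}\mu(y)\,\bb P_y[H_x<H_B]$ is negligible compared to $\sum_{y\in E_1}\mu(y)\,\bb P_y[H_x<H_B]$, and in particular compared to $\mu(x)\bb P_x[H_x^+ \cdots]$-type terms; more precisely it suffices to show
\begin{equation}
\label{pp02}
\sum_{y\notin E_1} \mu(y)\, \bb P_y[H_x < H_B] \;\prec\; \sum_{z\in E_1} \mu(z)\, \bb P_z[H_x < H_B]\;.
\end{equation}

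For the left-hand side of \eqref{pp02}, fix $y\notin E_1$. I would bound $\bb P_y[H_x < H_B] \le \bb P_y[H_x < H_{B}] \le \bb P_y[H_{E_1} < H^+_y]^{-1}\cdot(\text{something})$ — more cleanly, use \eqref{mf02} in the form $\bb P_y[H_x < H_B] \le \Cap(y,x)/\Cap(y, B)$, but the sharper route is: decompose according to the first visit to $E_1$. By the strong Markov property, $\bb P_y[H_x < H_B] \le \sum_{z\in E_1} \bb P_y[H_z = H_{E_1} < H_B]\,\bb P_z[H_x < H_B] \le \sum_{z\in E_1}\bb P_y[H_z < H^+_{\{y\}\cup B}]\cdot(\cdots)$. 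Summing against $\mu(y)$ and using $\mu(y)\bb P_y[H_z < H^+_{\{y\}\cup B}] \le \mu(y)\bb P_y[H_z < H^+_y] = M(y)\bb P_y[H_z<H^+_y]/\lambda(y)$ is awkward; instead I would directly invoke capacity: $\mu(y)\bb P_y[H_z < H^+_{y}] \preceq \Cap(y,z)$ up to the bounded factor $\lambda_N(y)$ — here one must be slightly careful since $M_N = \mu_N\lambda_N$, but the holding rates are bounded above by $|\Lambda_L|$ and below by a positive constant in the Metropolis-type setting, so $\mu(y)\bb P_y[\cdot] \approx M(y)\bb P_y[\cdot] \le \Cap(y,\cdot)$. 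Thus for each $y\notin E_1$ and each $z\in E_1$ with $\mu(z)\preceq\mu(y)$ we get a contribution $\preceq \Cap(y,z)$, and hypothesis \eqref{ehyp1} gives $\Cap(y,z) \prec \mu(z)\,\Cap(y,E_1)/\mu(y)$. I then need to absorb $\Cap(y,E_1)/\mu(y)$: note $\bb P_y[H_x<H_B]\ge \bb P_y[H_{E_1}<H^+_{\{y\}\cup B}]$ leads, via $M(y)\bb P_y[H_{E_1}<H^+_y]=\Cap(y,E_1)$ and \eqref{mf05}, to $\Cap(y, x\cup B)\,\bb P_y[H_x<H_B]=M(y)\bb P_y[H_x<H^+_{B\cup\{y\}}]$; combining these pieces shows $\mu(z)\bb P_z[H_x<H_B]$ — summed appropriately — dominates. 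The terms with $\mu(z)\succ\mu(y)$ for all $z\in E_1$ are handled separately: such a $y$ has $\mu(y)\prec\min_{z\in E_1}\mu(z)$, hence $\mu(y)\bb P_y[H_x<H_B]\le\mu(y)\prec\mu(z_0)\le \sum_{z\in E_1}\mu(z)\bb P_z[H_x<H_B]$ provided the latter sum is $\approx\mu(z_0)$ for some $z_0$, which holds because it includes the term $z=x$... wait, $x\in E_1$ but $\bb P_x[H_x<H_B]=1$, so the right side of \eqref{pp02} is at least $\mu(x)$, and one checks $\mu(x)\succeq\mu(z)$ for the relevant $z$, closing the estimate. (One should double-check whether the sum on the right in the lemma statement includes $y=x$; it does, contributing $\mu(x)$.)

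The main obstacle I anticipate is bookkeeping the hierarchy of $\mu$-values correctly: hypothesis \eqref{ehyp1} is only assumed when $\mu(z)\preceq\mu(y)$, so for states $y\notin E_1$ that sit \emph{below} every state of $E_1$ in $\mu$-order, \eqref{ehyp1} gives nothing and one must argue by the crude bound $\bb P\le 1$ plus the fact that $\sum_{z\in E_1}\mu(z)\bb P_z[H_x<H_B]\ge\mu(x)$ dominates any such $\mu(y)$. Conversely, for $y$ with $\mu(z)\preceq\mu(y)$ for some but not all $z\in E_1$, one applies \eqref{ehyp1} to the comparable $z$'s and the trivial bound to the rest. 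Making the passage from $\Cap(y,z)$-type quantities back to $\bb P_y[H_x<H_B]$-type quantities clean — i.e.\ not losing the wrong powers of capacities — via \eqref{mf05}–\eqref{mf02} and \eqref{mf03} is the delicate part; everything else is the exact formula \eqref{pp01} plus $\approx$-arithmetic, which is robust because $E$ is finite (only finitely many sequences to compare, and Assumption \ref{mhyp} guarantees all the relevant ratios have limits).
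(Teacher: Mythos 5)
Your overall architecture is the same as the paper's: the exact formula $\bb E_x[H_B]=\Cap(x,B)^{-1}\sum_{y\in E}\mu(y)\,\bb P_y[H_x<H_B]$ from \cite[Proposition 6.10]{bl2}, the reduction to showing that the contribution of $y\notin E_1$ is negligible, the decomposition of $\bb P_y[H_x<H_B]$ over the entrance point $z$ of the chain into $E_1$, and the case split according to whether $\mu(z)\preceq\mu(y)$. The gap lies in the central estimate for a fixed pair $y\notin E_1$, $z\in E_1$ with $\mu(z)\preceq\mu(y)$. The inequality $\bb P_y[H_z=H_{E_1}<H_B]\le\bb P_y[H_z<H^+_{\{y\}\cup B}]$ is false: the event $\{H_{E_1}=H_z\}$ allows the chain to return to $y$ any number of times before reaching $z$, whereas the right-hand event forbids any return to $y$. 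The bound you substitute for it, namely that each cross term contributes $\preceq\Cap(y,z)$, fails for the same reason and is in fact \emph{stronger} than what is true. The correct estimate is
\begin{equation*}
\mu(y)\,\bb P_y[H_{E_1}=H_z]\;=\;\mu(y)\,\bb P_y\big[H_z<H_{E_1\setminus\{z\}}\big]
\;\le\;\mu(y)\,\frac{\Cap(y,z)}{\Cap(y,E_1)}\;,
\end{equation*}
obtained from \eqref{mf05}--\eqref{mf02} applied to the pair of sets $\{z\}$ and $E_1\setminus\{z\}$: the renewal identity \eqref{mf01} places $\Cap(y,E_1)$, not $M(y)$, in the denominator. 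The factor $\mu(y)/\Cap(y,E_1)$, which is typically exponentially large, is exactly what hypothesis \eqref{ehyp1} is calibrated to absorb, since \eqref{ehyp1} is equivalent to $\mu(y)\,\Cap(y,z)/\Cap(y,E_1)\prec\mu(z)$. Had your bound $\preceq\Cap(y,z)$ been correct, only the much weaker hypothesis $\Cap(y,z)\prec\mu(z)$ would have been needed --- a warning sign. Your subsequent attempt to ``absorb $\Cap(y,E_1)/\mu(y)$'' does not supply the missing inequality and ends with an unsubstantiated ``combining these pieces shows \dots\ dominates''.

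Once the displayed bound is in place, the rest of your plan goes through and reproduces the paper's proof: for each $z$ with $\mu(z)\preceq\mu(y)$ the cross term is $\prec\mu(z)\,\bb P_z[H_x<H_B]$ term by term; for $z$ with $\mu(y)\prec\mu(z)$ the trivial bound $\bb P_y[H_{E_1}=H_z]\le 1$ suffices; and your observation that the $z=x$ term makes the leading sum at least $\mu(x)$ is correct but not even needed once the comparison is done term by term in $z$.
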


\begin{proof}
By \cite[Proposition 6.10]{bl2},
\begin{equation*}
\bb E_x [H_B] \;=\; \frac{1}{\Cap(x,B)}
\sum_{y \in E} \mu(y) \, \bb P_y [H_{x} < H_B] \;.
\end{equation*}
Denote by $\bb P^1_z$, $z\in E_1$, the distribution of the trace of
$\sigma(t)$ on the set $E_1$ starting from $z$ (cf. \cite[Section
6]{bl2} for the definition of trace), and let $q(y,z) = \bb
P_y[H_{E_1} = H_z]$, $y \not \in E_1$, $z\in E_1$. Decomposing the
previous sum according to $y \in E_1$, $y \not \in E_1$, since $B$ and
$x$ are contained in $E_1$, we can write it as
\begin{equation}
\label{eq02}
\begin{split}
& \sum_{y \in E_1} \mu(y) \, \bb P^1_y[H_{x} < H_B]
\;+\; \sum_{y \not \in  E_1} \sum_{z\in E_1} \mu(y) 
\, q(y, z) \, \bb P^1_z[H_{x} < H_B] \\
&\quad =\; \sum_{y \in E_1} \mu(y) \, \bb P^1_y[H_{x} < H_B]
\;+\; \sum_{z \in E_1} \bb P^1_z[H_{x} < H_B]
\sum_{y \not \in  E_1} \mu(y) \, q(y, z)\;.  
\end{split}
\end{equation}

We claim that for $y\not \in  E_1$, $z\in E_1$,
\begin{equation}
\label{01}
\mu(y) \, q(y, z) \;=\;
\mu(y) \, \bb P_y[H_z < H_{E_1\setminus \{z\}} ] 
\;\prec\; \mu(z)\;.
\end{equation}
If $\mu(y) \prec \mu(z)$, there is nothing to prove. Assume
that $\mu(z) \preceq\mu(y)$. In this case, by \eqref{mf02} and by
\eqref{ehyp1}, the second term in the previous expression is bounded
by
\begin{equation*}
\frac{\mu(y) \, \Cap(y,z)}{\Cap(y , E_1)}
\;\prec\; \mu(z)\;.
\end{equation*}
This proves claim \eqref{01} and that the second term in the last
equation of \eqref{eq02} is of smaller order than the first, as
asserted.
\end{proof}

\begin{remark}
\label{rm1}
In Lemma \ref{mest1}, the set $E_1$ has to be interpreted as the union
of wells. In the set-up of the Metropolis dynamics introduced in
Remark \ref{rm2}, by \eqref{303} and Remark \ref{rm2}, for two
disjoint subsets $A$, $B$ of $E$, $\Cap (A,B)/\mu_N(x_{A,B})$
converges, as $N\uparrow\infty$, to a real number in
$(0,\infty)$. Hence, assumption \eqref{ehyp1} requires that for all
$z\in E_1$, $y \not\in E_1$ such that $\bb H(y)\le \bb H(z)$,
\begin{equation}
\label{304}
\bb H(x_{y,E_1}) \;-\; \bb H(y) 
\;<\; \bb H(x_{y,z}) \;-\; \bb H(z)\;.
\end{equation}
In other words, it requires the energy barrier from $y$ to $E_1$ to be
smaller than the one from $z$ to $y$.
\end{remark}

The condition \eqref{304} may seem unnatural, as one would expect on
the right hand side $\bb H(x_{z, \breve E_z}) - \bb H(z)$ instead of
$\bb H(x_{y,z}) - \bb H(z)$, where $\breve E_z$ represents the union
of the wells which do not contain $z$. However, since in the
applications the set $E_1$ represents the union of wells, and since
$\bb H(y)\le \bb H(z)$, to reach $y$ from $z$ the chain has to jump
from one well to another and therefore one should have $\bb H(x_{z,
  \breve E_z}) - \bb H(z) \le \bb H(x_{y,z}) - \bb H(z)$.

\begin{lemma}
\label{as14}
Fix two points $a \not =b\in E$.  The set of sequences $\mu_N(x) \bb
P_x[H_a<H_b]$, $x\in E\setminus \{b\}$, is ordered.
\end{lemma}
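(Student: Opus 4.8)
The plan is to reduce the claim to a statement about products of jump rates, where it follows directly from Assumption \ref{mhyp}. First I would express $\mu_N(x)\,\bb P_x[H_a<H_b]$ in terms of capacities and conductances. Write $\mu_N(x)\,\bb P_x[H_a<H_b] = \mu_N(x)\,\bb P_x[H_a<H_{\{a,b\}}^+]/\bb P_x[H_{\{a,b\}}<H_x^+]$ when $x\notin\{a,b\}$, using an identity of the type \eqref{mf01} with the roles adjusted; dividing numerator and denominator appropriately and recalling $\Cap(x,A\cup B)=M_N(x)\,\bb P_x[H_{A\cup B}<H_x^+]$, one gets $\mu_N(x)\,\bb P_x[H_a<H_b] = \lambda_N(x)^{-1}\,\Cap(x,\{a,b\})\,\bb P_x[H_a<H_b]$, and then $\Cap(x,\{a,b\})\,\bb P_x[H_a<H_b]$ can be related to the equilibrium potential problem. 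Actually the cleanest route is to invoke \eqref{303}: for the pair of sets $A=\{a\}$, $B=\{b\}$ (or more precisely for the object $\mu_N(x)\,\bb P_x[H_a<H_b]$ viewed through the trace chain on $\{a,b,x\}$), there is a representation as a ratio of sums of products of conductances $G_N(u,v)=\mu_N(u)R_N(u,v)$ over spanning-tree-like structures; this is the standard Markov chain tree theorem / Kirchhoff formula applied to the three-state trace chain.

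More concretely, I would pass to the trace of $\eta^N_t$ on the three-point set $\{a,b,x\}$ (as in \cite[Section 6]{bl2}), whose stationary measure is the restriction of $\mu_N$ and whose jump rates $r_N(u,v)$ are themselves, by the formula for trace rates, ratios of finite sums of products of the original rates $R_N(\cdot,\cdot)$. On a three-state chain the hitting probability $\bb P_x[H_a<H_b]$ has an explicit rational expression in the $r_N(u,v)$: namely $\bb P_x[H_a<H_b] = \big(r_N(x,a) + r_N(x,b)\,p\big)/\big(r_N(x,a)+r_N(x,b)\big)$ for an appropriate $p$, or more symmetrically, by the tree theorem, $\mu_N(x)\bb P_x[H_a<H_b]$ equals a ratio $P_N/Q_N$ where $P_N$ and $Q_N$ are each finite $\bb Z_+$-linear combinations of monomials $\prod_{(u,v)\in\bb B} R_N(u,v)^{k(u,v)}$ of a fixed total degree $m$. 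By Assumption \ref{mhyp}, for each fixed $m$ the family of all such monomials (indexed by $k\in\mf A_m$) is ordered, hence each of $P_N$ and $Q_N$ is asymptotically equivalent to its dominant monomial, and therefore $P_N/Q_N$ is asymptotically equivalent (up to a positive constant) to a single monomial ratio. Since for two points $x,x'$ the corresponding ratios $P_N/Q_N$ and $P'_N/Q'_N$ are both, after clearing denominators, ratios of $\bb Z_+$-combinations of monomials of a common degree, $\arctan$ of their quotient converges; this is exactly orderedness of the family $\{\mu_N(x)\bb P_x[H_a<H_b] : x\in E\setminus\{b\}\}$.

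The main obstacle, and the step requiring care, is getting a clean bound on the degree $m$ of the monomials appearing in $P_N$ and $Q_N$: the trace rates $r_N(u,v)$ are themselves ratios, so substituting them into the three-state formula produces nested fractions, and one must clear all denominators and check that numerator and denominator end up as honest elements of the span of $\{\prod R_N(u,v)^{k(u,v)} : k\in\mf A_m\}$ for a single finite $m$ (depending only on $|E|$, not on $N$). The safest way to avoid the bookkeeping of trace rates is to apply the Markov chain tree theorem directly to the original chain $\eta^N_t$ on $E$: $\bb P_x[H_a<H_b]$ is then $\sum_{T} w_N(T) / \sum_{T'} w_N(T')$, where the sums run over certain forests in $E$ rooted at $\{a,b\}$ and $w_N(T)=\prod_{(u,v)\in T} R_N(u,v)$, so every monomial has total degree exactly $|E|-|\{a,b\}| = |E|-2$, and $\mu_N(x)$ contributes, via $\mu_N(x)/\mu_N(a)$ say, a further ratio of monomials of bounded degree by reversibility and the tree theorem for $\mu_N$ itself. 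With $m$ thus fixed, orderedness of the monomial family from Assumption \ref{mhyp} finishes the argument: the ratio of any two sequences in the family reduces to a ratio of sums of comparable monomials, whose dominant terms give a single monomial ratio, for which $\arctan$ of the quotient converges by hypothesis.
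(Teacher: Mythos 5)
Your final argument is correct, but it follows a genuinely different route from the paper. The paper takes the trace of the chain on the four-point set $\{a,b,x,y\}$, writes $\bb P^A_x[H_a<H_b]$ explicitly as a rational function of the four-state jump probabilities, converts to rates, uses reversibility of the trace, and then invokes Lemma 4.3 of \cite{bl4}, which guarantees that the \emph{trace} rates $R^A$ again satisfy Assumption \ref{mhyp}; this is exactly how the authors sidestep the ``nested fraction'' problem you worry about. You instead work with the original chain on all of $E$: the Freidlin--Wentzell $W$-graph formula expresses $\bb P_x[H_a<H_b]$ as a ratio of sums over two-rooted forests, each monomial of degree $|E|-2$ in the rates $R_N(\cdot,\cdot)$, and the tree theorem (or reversibility along a path, which after clearing denominators also yields matching degrees) handles the factor $\mu_N(x)$; the ratio for two points $x,y$ then becomes a quotient of two $\bb Z_+$-combinations of monomials of one common degree $m$, and Assumption \ref{mhyp} applies directly to the original rates. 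Your approach buys independence from the trace-process machinery and from \cite[Lemma 4.3]{bl4}, at the price of invoking the graph/forest formulas and of the (routine, but worth writing out) observation that a finite ordered family is stable under sums: a total preorder on a finite index set has a maximal element, so each sum is asymptotically a positive multiple of its dominant monomial, whence the quotient converges in $[0,+\infty]$. Two small caveats: the capacity identity displayed in your first paragraph is wrong as written (it would force $M_N(x)=\Cap(x,\{a,b\})$), but you abandon that route so nothing is lost; and you should treat the case $x=a$ (where $\bb P_a[H_a<H_b]=1$) separately, as the paper does --- your forest formula still covers it after cancelling the common normalizations.
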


\begin{proof}
Fix two points $x \not =y\in E\setminus \{b\}$. We need to show that
the ratio $\mu_N(x) \bb P_x[H_a<H_b]/\mu_N(y) \bb P_y[H_a<H_b]$ either
converges to some value in $[0,\infty)$, or increases to $\infty$.

Assume that $x\not =a$, $y\not = a$, and consider the trace of the
process $\eta^N(t)$ on $A=\{a,b,x,y\}$. By \cite[Section 6]{bl2}, the
stationary measure of the trace is the measure $\mu_N$ conditioned to
$A$. Denote by $\bb P^A_z$ the distribution of the trace starting from
$z$. It is clear that $\bb P_z[H_a<H_b] = \bb
P^A_z[H_a<H_b]$. Therefore,
\begin{equation*}
\frac{\mu_N(x) \bb P_x[H_a<H_b]}{\mu_N(y) \bb P_y[H_a<H_b]} \;=\;
\frac{\mu^A_N(x) \bb P^A_x[H_a<H_b]}{\mu^A_N(y) \bb P^A_y[H_a<H_b]}\;,
\end{equation*}
where $\mu^A_N$ represents the measure $\mu_N$ conditioned to $A$.
Since $A$ has only four elements, it is not difficult to show that
\begin{equation*}
\bb P^A_x[H_a<H_b] \;=\; \frac{p^A(x,a) + p^A(x,y)p^A(y,a)}{1-p^A(x,y)
  p^A(y,x)}\;, 
\end{equation*}
where $p^A(z,z')$ represents the jump probabilities of the trace
process. In particular, multiplying the numerator and the denominator
of the penultimate ratio by $\lambda^A(x)\lambda^A(y)$, where
$\lambda^A$ stands for the holding rates of the trace process, yields
that the penultimate ratio is equal to
\begin{equation*}
\frac{\mu^A_N(x) \{\lambda^A(y) R^A(x,a) + R^A(x,y)R^A(y,a)\}}
{\mu^A_N(y) \{\lambda^A(x)  R^A(y,a) + R^A(y,x)R^A(x,a)\}}\;,
\end{equation*}
where $R^A$ is the jump rates of the trace process. By reversibility of
the trace process, this expression is equal to
\begin{equation*}
\frac{\lambda^A(y) R^A(a,x) + R^A(y,x)R^A(a,y) }
{\lambda^A(x)  R^A(a,y) + R^A(x,y) R^A(a,x)}\;\cdot
\end{equation*}
By \cite[Lemma 4.3]{bl4}, the set of jump rates $R^A(x,y)$ satisfies
Assumption \ref{mhyp}. Since $\lambda^A(z) = \sum_{z'\in A, z'\not =
  z} R^A(z,z')$, by Assumption \ref{mhyp}, the previous expression
either converges to some $a\in [0,\infty)$, or increases to
$+\infty$. This completes the proof of the assertion in the case where
$x$, $y\not\in \{a,b\}$.

The case where $x=a$ or $y=a$ is simpler and left to the reader.
\end{proof}

\section{Proofs of Propositions \ref{0b1}, \ref{mt4} and \ref{mt3}.A}
\label{sec3}

We examine in this section the metastable behavior of the Blume-Capel
model starting from $\bf -1$. We first consider isovolumetric
inequalities.  Denote by $\Vert \,\cdot\,\Vert$ the Euclidean norm of
$\bb R^2$. A subset $A$ of $\bb Z^2$ is said to be connected if for
every $x$, $y\in A$, there exists a path $\gamma = (x=x_0, x_1, \dots,
x_n=y)$ such that $x_i\in A$, $\Vert x_{i+1} - x_i\Vert =1$, $0\le i<
n$.  Denote by $\mc C_n$, $n\ge 1$, the class of connected subset of
$\bb Z^2$ with $n$ points and by $P(A)$ the perimeter of a set $A\in
\mc C_n$:
\begin{equation*}
P(A) \;=\; \# \{(x,y) \in \bb Z^2 : x\in A \,,\, y\not \in A
\,,\, \Vert x-y\Vert=1\}\;.
\end{equation*}
where $\# B$ stands for the cardinality of $B$.

\begin{asser}
\label{as5}
For every $A\in \mc C_n$, $n\ge 1$, $P(A) \ge 4\sqrt{n}$.
\end{asser}

\begin{proof}
For $A\in \mc C_n$, denote by $R$ the smallest rectangle which
contains $A$, and by $a\le b$ the length of the sides of the rectangle
$R$. Since $A$ is connected, and since $R$ is the smallest rectangle
which contains $A$, $P(A) \ge 2(a+b) \ge 2 \min\{ \mf a+ \mf b : \mf
a, \mf b\in\bb R \,,\, \mf a\mf b \ge n\} = 4\sqrt{n}$.
\end{proof}

\begin{asser}
\label{as6}
A set $A\in \mc C_m$, $m=n_0(n_0+1)$, is either a $n_0 \times (n_0+1)$
rectangle or has perimeter $P(A) \ge 4(n_0+1)$.
\end{asser}

\begin{proof}
Fix $A\in \mc C_m$, and recall the notation introduced in the proof of
the previous assertion. We may restrict our study to the case where
the length of the shortest side of $R$, denoted by $a$, is less than or equal
to $n_0$, otherwise the perimeter is larger than or equal to
$4(n_0+1)$. If $a=n_0$, either $b=n_0+1$, in which case, to match the
volume, $A$ must be a $n_0 \times (n_0+1)$ rectangle, or $b\ge n_0+2$,
in which case the the perimeter is larger than or equal to $4(n_0+1)$.
If $a=n_0-j$ for some $j\ge 1$, then $b=n_0+k$ for some $k\ge 1$
because the volume has to be at least $n^2_0$. Actually, we need
$(n_0+k)(n_0-j) \ge n_0(n_0+1)$, i.e., $(k-j)n_0 \ge n_0 + kj$. This
forces $k-j\ge 2$ and, in consequence, the perimeter $P\ge 4(n_0+1)$.
\end{proof}

We may extend the definition of the energy $\bb H$ introduced in
\eqref{eq06} to configuration in $\{-1,0,1\}^{\bb Z^2}$. For such
configurations, while $\bb H(\sigma)$ is not well defined, $\bb
H(\sigma)-\bb H({\bf -1})$ is well defined if $\sigma_x=-1$ for all
but a finite number of sites.

Denote by $\partial_+ A$ the outer boundary of a connected finite
subset $A$ of $\bb Z^2$: $\partial_+A =\{x\not\in A : \exists\, y\in A
\text{ s.t. } \Vert y-x\Vert = 1\}$. 

\begin{asser}
\label{as4}
Let $A\in \mc C_n$, $1\le n\le (n_0+1)^2$, and let $\sigma$ be a
configuration of $\{-1,0,1\}^{\bb Z^2}$ whose spins in $A$ are equal
to $+1$ and whose spins in $\partial_+A$ are either $0$ or $-1$. Let
$\sigma^\star$ be the configuration obtained from $\sigma$ by
switching all spins in $A$ to $0$. Then, $\bb H(\sigma) \ge \bb
H(\sigma^\star) +2$.
\end{asser}

\begin{proof}
By definition of the energy and since $A$ has $n$ points, $\bb
H(\sigma) - \bb H(\sigma^\star) = - hn + P_0 + 3P_{-1} \ge -hn + P$,
where $P_0$ (resp. $P_{-1}$) represents the number of unordered pairs
$\{x,y\}$ such that $x\in A$, $y\in \partial_+A$, $\sigma_y=0$
(resp. $\sigma_y=-1$), and where $P=P_0 + P_{-1}$ is the perimeter of
the set $A$.

It remains to show that $P-hn\ge 2$. For $1\le n\le 3$, this follows
by inspecting all cases, keeping in mind that $h<1\le 2$. Next, assume
that $n\ge 4$. By hypothesis, and since $n_0\ge 2$, $(2/3) \sqrt{n}
\le (2/3) (n_0+1) \le n_0 < 2/h$ so that $hn < 3\sqrt{n}$. Hence, by
Assertion \ref{as5}, $hn < 4\sqrt{n} - \sqrt{n} \le P-2$.
\end{proof}

Let $A(\sigma) = \{x\in\bb Z^2 : \sigma_x \not = -1\}$, $\sigma\in
\{-1,0,1\}^{\bb Z^2}$. Denote by $\mf B$ the boundary of the valley of
$-1$ formed by the set of configurations with $n_0(n_0+1)$ sites with
spins different from $-1$:
\begin{equation*}
\mf B \;=\; \big\{\sigma\in \{-1,0,1\}^{\bb Z^2} : 
|A(\sigma)| = n_0(n_0+1) \big\}\;.
\end{equation*}
Sometimes, we consider $\mf B$ as a subset of $\Omega$.  Denote by
$\mf R$ the subset of $\mf B$ given by
\begin{equation*}
\mf R \;=\; \big\{ \sigma\in \{-1,0\}^{\bb Z^2} : 
A(\sigma) \text{ is a } n_0\times (n_0+1) \text{ rectangle }\big\} \;.
\end{equation*}
Note that the spins of a configuration $\sigma\in \mf R$ are either
$-1$ or $0$ and that all configurations in $\mf R$ have the same
energy.  

\begin{asser}
\label{lem1}
We have that $\bb H(\sigma) \ge \bb H(\zeta) +2$ for all $\sigma\in
\mf B \setminus \mf R$, $\zeta\in\mf R$.
\end{asser}

\begin{proof}
Fix a configuration $\sigma\in \mf B$. Let $\sigma^*$ be the
configuration of $\{-1,0\}^{\bb Z^2}$ obtained from $\sigma$ by
switching all $+1$ spins to $0$. Applying Assertion \ref{as4} $k$
times, where $k$ is the number of connected components formed by
$+1$ spins, we obtain that $\bb H(\sigma) \ge \bb H(\sigma^*) + 2k$.
It is therefore enough to prove the lemma for configurations
$\sigma\in \{-1,0\}^{\bb Z^2}$.

Let $\sigma$ be a configuration in $\mf B \cap \{-1,0\}^{\bb Z^2}$. If
$A(\sigma)$ is not a connected set, by gluing the connected components
of $A(\sigma)$, we reach a new configuration $\sigma^*\in \mf B \cap
\{-1,0\}^{\bb Z^2}$ such that $A(\sigma^*) \in \mc C_m$,
$m=n_0(n_0+1)$. Since by gluing two components, the volume remains
unchanged, but the perimeter decreases at least by $2$, $\bb H(\sigma)
\ge \bb H(\sigma^*) + 2$. It is therefore enough to prove the lemma
for those configuration in $\mf B \cap \{-1,0\}^{\bb Z^2}$ for which
$A(\sigma)$ is a connected set.

Finally, fix a configuration in $\mf B \cap \{-1,0\}^{\bb Z^2}$ for
which $A(\sigma)$ is a connected set different from a $n_0\times
(n_0+1)$ rectangle.  Since all spins of $\sigma$ are either $-1$ or
$0$. By definition of the energy, $\bb H(\sigma) - \bb H(\zeta) =
P(A(\sigma))- P(A(\zeta))$, and the result follows from Assertion
\ref{as6}.
\end{proof}

Denote by $\mf R^+$ the set of configurations in $\{-1,0, +1\}^{\bb
  Z^2}$ in which there are $n_0(n_0+1)+1$ spins which are not equal to
$-1$. Of these spins, $n_0(n_0+1)$ form a $n_0\times
(n_0+1)$-rectangle of $0$ spins. The remaining spin not equal to $-1$
is either $0$ or $+1$.

It is clear that starting from $\bf -1$ the set $(\mf B \setminus \mf
R) \cup \mf R^+$ is hit before the chain attains the set $\{{\bf 0},
{\bf +1}\}$:
\begin{equation}
\label{eq09}
H_{\mf B^+} \;<\; H_{\{{\bf 0}, {\bf +1}\}} \quad \bb P_{\bf -1} \text{
  a.s.}\;, \quad\text{where $\mf B^+ = (\mf B \setminus \mf R) \cup \mf
  R^+$} \;. 
\end{equation}
Let $\mf R^{a}\subset \mf R^+$ be the set of configurations for which the
remaining spin is a $0$ spin attached to one of the sides of the
rectangle. Note that all configurations of $\mf R^a$ have the same
energy and that $\bb H(\xi) = \bb H(\zeta)+2 -h$ if $\xi\in \mf R^a$,
$\zeta\in \mf R$. In particular, 
\begin{equation}
\label{eq07}
\bb H(\sigma) \;\ge\; \bb H(\xi) \;+\; h \;, \quad 
\sigma\in \mf B \setminus \mf R\;\;,\;\;
\xi\in \mf R^a\;.
\end{equation}
On the other hand, for a configuration $\eta\in\mf R^+ \setminus \mf
R^a$, $\bb H(\eta) \ge \bb H(\zeta)+4 -h$ if $\zeta\in \mf R$, so that
\begin{equation}
\label{eq08}
\bb H(\eta) \;\ge\; \bb H(\xi) \;+\; 2 \;, \quad 
\eta\in \mf R^+ \setminus \mf R^a\;\;,\;\;
\xi\in \mf R^a\;.
\end{equation}
\smallskip

Recall the notation introduced just above Remark \ref{rm2}. For two
disjoint subsets $\ms A$ to $\ms B$ of $\Omega$, denote by $\xi_{\ms
  A, \ms B}$, the configuration with highest energy in the optimal
path joining $\ms A$ to $\ms B$. By Remark \ref{rm2} and \eqref{303},
the limit
\begin{equation}
\label{mf04}
\lim_{\beta\to\infty} \frac{\Cap (\ms A, \ms B)}
{\mu_\beta(\xi_{\ms A, \ms B})} \;\;\text{exists and takes value in}\;\; 
(0,\infty) \;.
\end{equation}
In particular, for every subset $\ms B$ of $\Omega$ and every
configuration $\sigma\not\in\ms B$,
\begin{equation}
\label{401}
\Cap (\sigma, \ms B) \;\preceq\; \frac 1{Z_\beta}\, e^{-\beta \bb H(\sigma)}  \;.
\end{equation}

\begin{asser}
\label{as7}
For all configurations $\xi\in\mf R^a$, $ \Cap (\xi, {\bf -1})
\;\approx\; Z^{-1}_\beta \exp\{-\beta \bb H(\xi)\}$.
\end{asser}

\begin{proof}
Fix $\xi\in \mf R^a$. By \eqref{mf04}, \eqref{401}, it is enough to
exhibit a path $\gamma = (\xi=\xi_0, \xi_1, \dots, \xi_n={\bf -1})$
from $\xi$ to $\bf -1$ such that $\max_i \bb H(\xi_i) = \bb H(\xi)$.

Consider the path $\gamma = (\xi=\xi_0, \xi_1, \dots, \xi_n={\bf
  -1})$, $n=n_0(n_0+1)+1$, constructed as follows. $\xi_1$ is the
configuration obtained from $\xi$ by switching the attached particle
from $0$ to $-1$. Clearly, $\bb H(\xi_1) = \bb H(\xi) -2 +h$, and
$\xi_1$ consists of a $n_0\times (n_0+1)$ rectangle of $0$ spins.

The portion $(\xi_1, \dots, \xi_{n_0+1})$ of the path $\gamma$ is
constructed by flipping, successively, from $0$ to $-1$, all spins of
one of the shortest sides of the rectangle, keeping until the last
step the perimeter of the set $A(\xi_i)$ equal to $4n_0(n_0+1)$. In
particular, $\xi_{n_0+1}$ consists of a $n_0\times n_0$ square of $0$
spins, $\bb H(\xi_{i+1}) = \bb H(\xi_{i}) +h$ for $1\le i < n_0$, and
$\bb H(\xi_{n_0+1}) = \bb H(\xi_{n_0}) -2+h$. The energy of this piece
of the path attains its maximum at $\xi_{n_0}$ and $\bb H(\xi_{n_0}) =
\bb H(\xi_{1}) + (n_0-1)h = \bb H(\xi) -2 + n_0h < \bb H(\xi)$.

The path proceed in this way by always flipping from $0$ to
$-1$ all spins of one of the shortest sides. It is easy to check that
$\bb H(\xi_i) < \bb H(\xi)$ for all $1\le i\le n$, proving the
assertion. 
\end{proof}

\begin{asser}
\label{as3}
For every $\sigma\in \mf B^+\setminus \mf R^a$,
$\lim_{\beta\to\infty} \bb P_ {\bf -1} [H_{\sigma} = H_{\mf B^+} ] =0$.
\end{asser}

\begin{proof}
Fix $\sigma\in \mf B^+ \setminus \mf R^a$ and $\xi\in\mf R^a$. By
\eqref{mf02}; by the monotonicity of the capacity, stated in
\cite[Lemma 2.2]{gl14}, and by \eqref{401}; and by Assertion
\ref{as7},
\begin{equation*}
\bb P_{{\bf -1}} [H_{\sigma} = H_{\mf B^+}]  \;\le\;
\frac{ \Cap(\sigma , {\bf -1})} {\Cap(\mf B^+ , {\bf -1})} 
\;\le\;
\frac{C_0}{Z_\beta} \, \frac{ e^{-\beta \bb H(\sigma)} } {\Cap(\xi , {\bf -1})} 
\;\le \; C_0\, e^{-\beta \{\bb H(\sigma) - \bb H(\xi)\}} 
\end{equation*}
for some finite constant $C_0$ independent of $\beta$. By \eqref{eq07},
\eqref{eq08}, this expression vanishes as $\beta\uparrow\infty$, proving
the assertion.
\end{proof}

Denote by $\ms S$ the set of stable configurations:
\begin{equation}
\label{stable}
\ms S \;=\; \big\{\sigma \in\Omega : \lim_{\beta\to\infty}
\lambda_\beta(\sigma)=0\big\}\;. 
\end{equation}
The next assertion is the only one in which capacities are not used to
derive the needed bounds, because we estimate the probability of
reaching a state which can be attained through paths in which the
energy never increase. The argument, though, is fairly simple.

\begin{figure}
  \centering
\begin{tikzpicture}[scale = .3]
\foreach \y in {0, ..., 4}
\foreach \x in {0, ..., 5}
\draw[very thick] (\x, \y) -- (\x +1, \y) -- (\x
+ 1, \y + 1) -- (\x , \y + 1) -- (\x , \y );
\draw[very thick] (-1, 4) -- (0, 4) -- (0, 5) -- (-1 , 5) -- 
(-1 , 4);
\foreach \x in {9, ..., 14}
\foreach \y in {0, ..., 4}
\draw[very thick] (\x, \y) -- (\x +1, \y) -- (\x
+ 1, \y + 1) -- (\x , \y + 1) -- (\x , \y );
\draw[very thick] (10, 5) -- (11, 5) -- (11,6) -- (10,6) -- (10,5);
\end{tikzpicture}
\caption{Examples of configurations $\sigma\in\mf R^c$ and
  $\sigma'\in\mf R^i$ in the case where $n_0=5$. A $1\times 1$ square
  centered at $x$ has been placed at each site $x$ occupied by a
  $0$-spin. All the other spins are equal to $-1$.}
\label{fig1}
\end{figure}
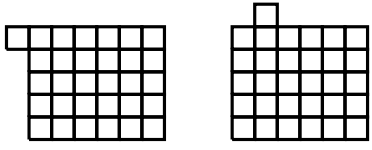

Denote by $\mf R^{c}$, $\mf R^{i}$ the configurations of $\mf R^{a}$
in which the extra particle is attached to the corner, interior of the
rectangle, respectively (cf. Figure \ref{fig1}).

\begin{asser}
\label{as9}
For $\sigma\in \mf R^{c}$ and $\sigma'\in \mf R^{i}$,
\begin{align*}
& \lim_{\beta\to\infty} 
\bb P_{\sigma} [H_{\sigma_+} = H_{\ms S}]  \;=\; 1/2
\quad \text{and}\quad
\lim_{\beta\to\infty} 
\bb P_{\sigma} [H_{\sigma_-} = H_{\ms S}]  \;=\; 1/2\;, \\
& \quad \lim_{\beta\to\infty} 
\bb P_{\sigma'} [H_{\sigma_+} = H_{\ms S}]  \;=\; 2/3
\quad \text{and}\quad
\lim_{\beta\to\infty} 
\bb P_{\sigma'} [H_{\sigma_-} = H_{\ms S}]  \;=\; 1/3\;,
\end{align*}
where $\sigma_-$ is the configuration obtained from $\sigma$ or
$\sigma'$ by flipping to $-1$ the attached $0$ spin, and $\sigma_+$ is
the configuration whose set $A(\sigma_+)$, formed only by $0$ spins,
is the smallest rectangle which contains $A(\sigma)$.
\end{asser}

\begin{proof}
Suppose that the extra $0$ spin is not attached to the corner of the
rectangle. Denote by $\sigma_1$, $\sigma_2$ the configurations
obtained from $\sigma'$ by flipping from $-1$ to $0$ one of the two
$-1$ spins which has two neighbor spins equal to $0$, and let
$\sigma_0=\sigma_-$. By definition, $R_\beta(\sigma',\sigma_j) =1$,
$0\le j\le 2$, and $R_\beta(\sigma',\sigma')=o(1)$ for all the other
configurations, where $o(1)$ represents an expression
which vanishes as $\beta\uparrow\infty$.  This shows that
$p_\beta(\sigma',\sigma_j)$ converges to $1/3$, $0\le j\le 2$.  We may
repeat this argument to show that from $\sigma_j$, $j=1$, $2$, one
reaches $\ms S$ at $\sigma_+$ with a probability asymptotically equal
to $1$. The argument is similar if the extra spin is attached to the
corner.
\end{proof}

\begin{asser}
\label{as10}
Fix a configuration $\sigma\in\ms S$ for which $A(\sigma)$ is a
$m\times n$ rectangle of $0$ spins in a sea of $-1$ spins. Assume that
$m\le n$. Then, 
\begin{equation*}
\lim_{\beta\to\infty} 
\bb P_{\sigma} [H_{\ms B} = H_{\ms S\setminus\{\sigma\}}]  \;=\; 1\;.
\end{equation*}
In this equation, If $n_0<m$, $n\le L-3$, $\ms B$ is the set of four
configurations in which a row or a column of $0$ spins is added to the
rectangle $A(\sigma)$. If $n_0<m < n=L-2$, the set $\ms B$ is a triple
which includes a band of $0$ spins of width $m$ and two configurations
in which a row or a column of $0$ spins of length $n$ is added to the
rectangle $A(\sigma)$.  If $n_0<m\le L-3$, $n=L$, the set $\ms B$ is a
pair formed by two bands of $0$ spins of width $m+1$. If $n_0<m=
n=L-2$, $\ms B$ is a pair of two bands of width $L-2$. If $n_0<m=L-2$,
$n=L$, $\ms B =\{{\bf 0}\}$. Finally, if $2\le m\le n_0$, $n\ge 3$,
the set $\ms B$ is the pair (quaternion if $m=n$) of configurations in
which a row or a column of $0$ spins of length $m$ is removed from the
rectangle $A(\sigma)$, and if $m=n=2$, $\ms B =\{{\bf -1}\}$.
\end{asser}

\begin{proof}
The assertion follows from inequality \eqref{mf02}, and from estimates
of $\Cap (\sigma, \ms S\setminus\{\sigma\})$, $\Cap (\sigma, \ms
S\setminus [\ms B \cup\{\sigma\}])$. In the case $m>n_0$, $\Cap
(\sigma, \ms S\setminus\{\sigma\}) \approx \mu_\beta(\sigma)
\exp\{-\beta (2-h)\}$ and $\Cap (\sigma, \ms S\setminus [\ms B
\cup\{\sigma\}]) \prec \mu_\beta(\sigma) \exp\{-\beta (2-h)\}$, while
in the case $m\le n_0$, $\Cap (\sigma, \ms S\setminus\{\sigma\})
\approx \mu_\beta(\sigma) \exp\{-\beta (m-1)h\}$ and $\Cap (\sigma,
\ms S\setminus [\ms B \cup\{\sigma\}]) \prec \mu_\beta(\sigma)
\exp\{-\beta (m-1)h\}$.
\end{proof}

\begin{lemma}
\label{as61}
For every $\sigma\in \mf B^+$, 
\begin{equation*}
\lim_{\beta\to\infty} \bb P_ {\bf -1}
[H_{\sigma} = H_{\mf B^+} ] \;=\; \frac 1{|\mf R^a|} 
\, \mb 1\{\sigma\in\mf R^a\}\;,
\end{equation*}
where $|\mf R^a|$ represents the number of configurations in $\mf
R^a$.
\end{lemma}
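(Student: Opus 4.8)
The plan is to combine Assertion~\ref{as3} (which rules out all targets in $\mf B^+\setminus\mf R^a$) with a symmetry argument showing that, among the configurations of $\mf R^a$, the entrance distribution of the chain into $\mf B^+$ is asymptotically uniform. The first half is immediate: by Assertion~\ref{as3}, for every $\sigma\in\mf B^+\setminus\mf R^a$ we have $\bb P_{\bf -1}[H_\sigma = H_{\mf B^+}]\to 0$, and since $\sum_{\sigma\in\mf B^+}\bb P_{\bf -1}[H_\sigma=H_{\mf B^+}]=1$ by \eqref{eq09}, the mass asymptotically concentrates on $\mf R^a$. So the whole content is to show $\bb P_{\bf -1}[H_\sigma=H_{\mf B^+}]\to 1/|\mf R^a|$ for each $\sigma\in\mf R^a$.

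For the uniformity on $\mf R^a$, the idea is to use the symmetries of the dynamics. Starting from $\bf -1$, before reaching $\mf B^+$ the chain visits only configurations in $\ms S$ whose $0$-clusters (together with the boundary structure described in Assertions~\ref{as9}, \ref{as10}) grow monotonically in energy up to $\mf B$; the relevant optimal paths pass through configurations whose $0$-spin support is an $m\times n$ rectangle with $m,n\le n_0$ (and eventually an $n_0\times(n_0+1)$ rectangle). The torus $\Lambda_L$ has a large symmetry group (translations, reflections, the $90^\circ$ rotation where applicable), and this group acts on $\Omega$ preserving $\bb H$, hence preserving $\mu_\beta$, the rates $R_\beta$, and the whole law $\bb P_{\bf -1}$ since $\bf -1$ is a fixed point. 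Consequently $\sigma\mapsto \bb P_{\bf -1}[H_\sigma=H_{\mf B^+}]$ is invariant under this group. One then argues that the group acts transitively on $\mf R^a$ up to the asymptotically negligible distinction already handled: the group is transitive on $n_0\times(n_0+1)$ rectangles (translations plus a rotation/reflection to swap the two side lengths), and for a fixed rectangle it acts transitively on the $2(2n_0+1)$ positions along the two longest sides to which the extra $0$-spin can be attached — but here one must be careful, because $\mf R^c$ (corner) and $\mf R^i$ (interior) sites are \emph{not} in the same orbit, and Assertion~\ref{as9} shows the chain's behavior genuinely differs at corners vs. interior sites (probabilities $1/2$ vs. $2/3$).

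The main obstacle, therefore, is precisely that the naive symmetry argument fails: the stabilizer-orbit structure on $\mf R^a$ does not make all of $\mf R^a$ a single orbit, so one cannot conclude uniformity from invariance alone. The resolution is to track the \emph{entrance mechanism} more carefully. The plan is to note that $\mf B^+$ is first hit either (a) by adding the extra $0$-spin to a configuration $\sigma_0\in\ms S$ whose support is an $n_0\times(n_0+1)$ rectangle of $0$-spins, in which case the extra spin lands at one of the $2(2n_0+1)$ sites along the long sides, and by the local rate structure (all such flips have rate $1$ and are equally likely at the moment of exit, as in the proof of Assertion~\ref{as9}) the resulting point of $\mf R^a$ is uniform; combined with symmetry-transitivity on the $n_0\times(n_0+1)$ rectangles themselves, this yields the uniform law on $\mf R^a$; or (b) by a configuration in $\mf B^+\setminus\mf R^a$, which is negligible by Assertion~\ref{as3}, or by a non-rectangular configuration of $\mf B$, which is excluded from being the \emph{first} entrance into $\mf B^+$ by Assertion~\ref{lem1} together with Assertions~\ref{as9}–\ref{as10} (the chain reaches $\ms S\cap\mf B$ only at an $n_0\times(n_0+1)$ rectangle with probability tending to $1$, since all competing configurations in $\mf B\setminus\mf R$ have strictly higher energy). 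Thus the proof reduces to: (i) invoke Assertion~\ref{as3} for the easy half; (ii) use Assertions~\ref{as9}, \ref{as10} to show the chain approaches $\mf B^+$ through an $n_0\times(n_0+1)$ rectangle of $0$-spins; (iii) use the uniform exit-rate structure at that rectangle to spread the entrance mass uniformly over the $2(2n_0+1)$ attachment sites; (iv) use the translation/reflection symmetry to identify all $n_0\times(n_0+1)$ rectangles, concluding $\bb P_{\bf -1}[H_\sigma=H_{\mf B^+}]\to 1/|\mf R^a|$ with $|\mf R^a| = 2(2n_0+1)\,|\Lambda_L|$.
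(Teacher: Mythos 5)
Your route is genuinely different from the paper's, and its skeleton is viable: kill $\mf B^+\setminus\mf R^a$ by Assertion~\ref{as3}, then show the entrance law is exactly uniform on $\mf R^a$ by combining the lattice symmetries (which act transitively on the $n_0\times(n_0+1)$ rectangles, fix $\bf -1$ and preserve $\mf B^+$) with the fact that, from a fixed rectangle $\rho\in\mf R$, all $2(2n_0+1)$ attachment moves carry the \emph{same} rate. You are also right that symmetry alone cannot finish the job, since corner and interior attachment sites lie in different orbits. The paper instead argues by time reversal: writing $\bb P_{\bf -1}[H_\sigma=H_{\mf B^+}]=M(\sigma)\,\bb P_\sigma[H_{\bf -1}=H^+_{\mf B^+\cup\{{\bf -1}\}}]/\Cap({\bf -1},\mf B^+)$ via \eqref{mf01} and reversibility, and then using Assertions~\ref{as9}--\ref{as10} to get $\lambda(\sigma)\,\bb P_\sigma[H_{\bf -1}=H^+_{\mf B^+\cup\{{\bf -1}\}}]\to 2\cdot\tfrac12=3\cdot\tfrac13=1$ for corner and interior sites alike; since $\mu_\beta$ is constant on $\mf R^a$, uniformity follows by normalization, with no appeal to spatial symmetry and with the capacity asymptotics \eqref{410} as a by-product. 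The corner/interior cancellation $\lambda(\sigma)\mf n(\sigma)\to1$ in the paper is the time-reversed counterpart of your ``equal attachment rates.''

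Two concrete problems in your step (iii) must be repaired. First, the attachment flips from $\rho\in\mf R$ to $\mf R^a$ do \emph{not} have rate $1$: they raise the energy by $2-h$, so their rate is $e^{-\beta(2-h)}$, while the dominant move from $\rho$ is the removal of a corner $0$-spin at rate $e^{-\beta h}\gg e^{-\beta(2-h)}$. Hence the chain does not ``exit'' $\rho$ into $\mf B^+$; it typically shrinks, returns to $\bf -1$, and tries again many times. The statement you need is the last-jump decomposition: since before $H_{\mf B^+}$ every configuration with $n_0(n_0+1)$ non-$(-1)$ spins lies in $\mf R$ and every upward or $0\to+1$ move from $\mf R$ lands in $\mf B^+$, the strong Markov property gives $\bb P_{\bf -1}[H_\sigma=H_{\mf B^+}]=G(\rho)\,p_\beta(\rho,\sigma)$ for $\sigma\in\mf R^a$ attached to $\rho$, where $G(\rho)$ is the expected number of visits to $\rho$ strictly before $H_{\mf B^+}$. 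Then $p_\beta(\rho,\sigma)=e^{-\beta(2-h)}/\lambda_\beta(\rho)$ is the same for all attachment sites and $G(\rho)$ is the same for all $\rho$ by symmetry, which is the correct form of your claim (and in fact yields exact, not merely asymptotic, uniformity on $\mf R^a$ for each $\beta$). Citing Assertion~\ref{as9} here is misplaced: it concerns the downhill moves \emph{from} $\mf R^a$, not the uphill entrance into it. Second, a minor count: the rectangles come in two orientations, so $|\mf R^a|=2|\Lambda_L|\cdot 2(2n_0+1)=4(2n_0+1)|\Lambda_L|$, not $2(2n_0+1)|\Lambda_L|$ (this is consistent with the value $\tfrac{4(2n_0+1)}{3}|\Lambda_L|$ in Proposition~\ref{mt3}).
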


\begin{proof}
In view of Assertion \ref{as3}, we may restrict our attention to 
$\sigma\in \mf R^a$. Fix a reference configuration $\sigma^*$ in
$\mf R^a$. By \eqref{mf01} and by definition of the
capacity, 
\begin{equation*}
\bb P_ {\bf -1} [H_{\sigma} = H_{\mf B^+} ] \;=\; \frac{M({\bf -1}) 
\, \bb P_ {\bf -1} [H_{\sigma} = H^+_{\mf B^+ \cup\{{\bf -1}\}} ]}
{\Cap ({\bf -1}, \mf B^+)}\;\cdot
\end{equation*}
By reversibility, the numerator of this expression is equal to
\begin{equation*}
M(\sigma) \, \bb P_ {\sigma} [H_{\bf -1} = H^+_{\mf B^+ \cup\{{\bf -1}\}} ]
\;=\; \mu_\beta(\sigma) \, \lambda(\sigma) \, 
\bb P_ {\sigma} [H_{\bf -1} = H^+_{\mf B^+ \cup\{{\bf -1}\}} ]\;.
\end{equation*}
By Assertions \ref{as9} and \ref{as10}, $\bb P_ {\sigma} [H_{\bf -1} =
H^+_{\mf B^+ \cup\{{\bf -1}\}} ] = \mf n (\sigma) +o(1)$, where
\begin{equation*}
\mf n(\sigma) \;=\;
\begin{cases}
1/2  & \text{ if $\sigma\in\mf R^{c}$,} \\
1/3 & \text{ if $\sigma\in\mf R^{i}$.}
\end{cases}
\quad\text{Since}\quad
\lambda(\sigma) \;=\;
\begin{cases}
2 + o(1) & \text{ if $\sigma\in\mf R^{c}$,} \\
3 +o(1) & \text{ if $\sigma\in\mf R^{i}$.}
\end{cases}
\end{equation*}
Since $\mu_\beta(\sigma) = \mu_\beta(\sigma^*)$, we conclude that
\begin{equation*}
\bb P_ {\bf -1} [H_{\sigma} = H_{\mf B^+} ] \;=\; 
\frac{\mu_\beta(\sigma^*)}
{\Cap ({\bf -1}, \mf B^+)} \, \Big( \mb 1\{\sigma \in \mf R^a\} 
+ o(1) \Big)\;.
\end{equation*}
Summing over $\sigma\in\mf B^+$, we conclude that
$\mu_\beta(\sigma^*)/\Cap ({\bf -1}, \mf B^+) = |\mf R^a|^{-1}
(1+o(1))$, which completes the proof of the assertion.
\end{proof}

It follows from the proof of the previous lemma that for any
configuration $\sigma^*\in\mf R^a$,
\begin{equation}
\label{410}
\lim_{\beta\to\infty}
\frac{\Cap ({\bf -1}, \mf B^+)}{\mu_\beta(\sigma^*)} 
\;=\; |\mf R^a|\;.
\end{equation}

Denote by $\mf R^{l}$, $\mf R^{s}$ the configurations of $\mf R^a$ in
which the extra particle is attached to one of the longest, shortest
sides, respectively, and let $\mf R^{lc} = \mf R^{l} \cap \ms R^{c}$,
$\mf R^{li}= \mf R^{l} \cap \mf R^{i}$.  The next lemma is an
immediate consequence of Assertions \ref{as9} and \ref{as10}.

\begin{lemma}
\label{as11b}
For $\sigma\in \mf R^{lc}$, $\sigma'\in \mf R^{li}$, and $\sigma''\in \mf R^{s}$,
\begin{align*}
& \lim_{\beta\to\infty} 
\bb P_{\sigma} [H_{\bf -1} = H_{\ms M}]  \;=\; 1/2
\quad \text{and}\quad
\lim_{\beta\to\infty} 
\bb P_{\sigma} [H_{\bf 0} = H_{\ms M}]  \;=\; 1/2\;, \\
& \quad \lim_{\beta\to\infty} 
\bb P_{\sigma'} [H_{\bf -1} = H_{\ms M}]  \;=\; 1/3
\quad \text{and}\quad
\lim_{\beta\to\infty} 
\bb P_{\sigma'} [H_{\bf 0} = H_{\ms M}]  \;=\; 2/3\;, \\
& \qquad \lim_{\beta\to\infty} 
\bb P_{\sigma''} [H_{\bf -1} = H_{\ms M}]  \;=\; 1\;.
\end{align*}
\end{lemma}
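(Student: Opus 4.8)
The plan is to combine Assertions \ref{as9} and \ref{as10}, following the sequence of stable configurations visited by the chain. Since ${\bf -1}$, ${\bf 0}$, ${\bf +1}$ are local minima of $\bb H$, they lie in $\ms S$; hence every configuration of $\ms M$ is stable, $H_{\ms M}$ is attained at a stable configuration, and it suffices to decide, for the trace of $\sigma_t$ on $\ms S$, which element of $\ms M$ it reaches first and with which probability. The configurations $\sigma\in\mf R^{lc}$, $\sigma'\in\mf R^{li}$, $\sigma''\in\mf R^s$ are not stable, so the first stable configuration visited is the one provided by Assertion \ref{as9} (applied with $\mf R^c\supset\mf R^{lc}$, with $\mf R^i\supset\mf R^{li}$, and with $\mf R^c$ or $\mf R^i$ according to whether the extra spin of $\sigma''$ sits at a corner or in the interior of the short side): starting from $\sigma$ it is the rectangle $\sigma_-\in\mf R$ with probability $\to 1/2$ and the $(n_0+1)\times(n_0+1)$ square $\sigma_+$ with probability $\to 1/2$; starting from $\sigma'$ these probabilities are $1/3$ and $2/3$; and starting from $\sigma''$ it is either the rectangle in $\mf R$ or the $n_0\times(n_0+2)$ rectangle $\sigma''_+$, the split being immaterial for the conclusion.

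I would then iterate Assertion \ref{as10} along the trace on $\ms S$. Starting from any rectangle of $0$ spins both of whose sides are at most $n_0$ --- in particular from the rectangle in $\mf R$, and from the $n_0\times(n_0+2)$ rectangle once it has lost one of its short sides --- the set $\ms B$ consists of the configurations obtained by deleting one of its shortest sides, so the trace runs through a strictly decreasing sequence of rectangles and reaches the $2\times 2$ square, then ${\bf -1}$. Starting from the $(n_0+1)\times(n_0+1)$ square we have $n_0<m=n$ and $n\le L-3$ (because $L>n_0+3$), so $\ms B$ consists of configurations obtained by adjoining a row or a column of $0$ spins; iterating, the trace runs through an increasing sequence of rectangles, then bands of $0$ spins, and finally reaches ${\bf 0}$. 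In each case the number of steps is bounded uniformly in $\beta$ and each step follows the prescription of Assertion \ref{as10} with probability $1-o(1)$, so the descent to ${\bf -1}$, respectively the ascent to ${\bf 0}$, takes place with probability tending to $1$; moreover, since $L>n_0+3$, none of the intermediate stable configurations is all $0$, all $-1$ or all $+1$, so none of them belongs to $\ms M$.

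Assembling the two steps gives the statement: from $\sigma\in\mf R^{lc}$ the first element of $\ms M$ reached is ${\bf -1}$ (through the descent from $\sigma_-$) with probability $\to 1/2$ and ${\bf 0}$ (through the ascent from $\sigma_+$) with probability $\to 1/2$; from $\sigma'\in\mf R^{li}$ these probabilities are $1/3$ and $2/3$; and from $\sigma''\in\mf R^s$ both possible first stable configurations lead to the descent to ${\bf -1}$, whence $\bb P_{\sigma''}[H_{\bf -1}=H_{\ms M}]\to 1$. The only point that needs a little care is the second step --- checking that the ``$1-o(1)$'' estimates propagate through the descent and the ascent, which is immediate since their lengths do not depend on $\beta$, and that these bounded trajectories never meet $\ms M$ before their endpoints, which uses $L>n_0+3$; the rest is bookkeeping with Assertions \ref{as9} and \ref{as10}.
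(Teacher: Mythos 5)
Your proof is correct and follows exactly the route the paper intends: the paper dismisses this lemma with the single remark that it is ``an immediate consequence of Assertions \ref{as9} and \ref{as10}'', and your two-stage argument (first stable configuration via Assertion \ref{as9}, then the deterministic growth of the supercritical square to $\bf 0$ and shrinkage of the subcritical rectangles to $\bf -1$ via iteration of Assertion \ref{as10}) is precisely that consequence, spelled out. The points you flag --- the uniformly bounded number of iterations, the fact that $L>n_0+3$ keeps the intermediate stable configurations out of $\ms M$, and that both stable successors of $\sigma''\in\mf R^s$ are subcritical --- are the right details to check.
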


It follows from this assertion that for every $\sigma\in \mf R^a$, 
\begin{equation}
\label{411}
\lim_{\beta\to\infty} 
\bb P_{\sigma} [H_{\{{\bf -1} ,\bf{0}\}} < H_{\bf +1}]  \;=\; 1\;,
\end{equation}

\begin{proof}[Proof of Proposition \ref{mt3}, Part A]
We first claim that
\begin{equation}
\label{601}
\Cap ({\bf -1} , \{{\bf 0}, {\bf +1}\}) \;=\;
\Cap ({\bf -1} , \mf B^+)
\sum_{\sigma\in\mf B^+} \bb P_{\bf -1} [ H_{\sigma} = H_{\mf B^+} ]  \,
\bb P_{\sigma} [ H_{\{{\bf 0}, {\bf +1}\} }  < H_{\bf -1} ] \;.
\end{equation}
Indeed, since starting from $\bf -1$ the process hits $\mf B^+$ before
$\{{\bf 0}, {\bf +1}\}$, by the strong Markov property we have that
\begin{equation*}
\bb P_{\bf -1} [ H_{\{{\bf 0}, {\bf +1}\} }  < H^+_{\bf -1} ] \;=\;
\sum_{\sigma\in\mf B^+} \bb P_{\bf -1} [ H_{\sigma}
= H^+_{\mf B^+ \cup \{\bf -1\}} ]  \,
\bb P_{\sigma} [ H_{\{{\bf 0}, {\bf +1}\} }  < H_{\bf -1} ] \;.
\end{equation*}
By \eqref{mf01}, we may rewrite the previous expression as
\begin{equation*}
\bb P_{\bf -1} [ H_{\mf B^+} < H^+_{\bf -1} ]  
\sum_{\sigma\in\mf B^+} \bb P_{\bf -1} [ H_{\sigma} = H_{\mf B^+} ]  \,
\bb P_{\sigma} [ H_{\{{\bf 0}, {\bf +1}\} }  < H_{\bf -1} ] \;.
\end{equation*}
This proves \eqref{601} in view of the definition \eqref{202} of the
capacity. 

By \eqref{601} and \eqref{410}, for any configuration $\sigma^*\in\mf
R^a$,
\begin{equation*}
\lim_{\beta\to\infty} \frac{\Cap ({\bf -1} , \{{\bf 0}, {\bf +1}\})} 
{\mu_\beta(\sigma^*)} \;=\; |\mf R^a|\, \lim_{\beta\to\infty} 
\sum_{\sigma\in\mf B^+} \bb P_{\bf -1} [ H_{\sigma} = H_{\mf B^+} ]  \,
\bb P_{\sigma} [ H_{\{{\bf 0}, {\bf +1}\} }  < H_{\bf -1} ] \;.
\end{equation*}
By Lemma \ref{as61}, the right hand side is equal to
\begin{equation}
\label{415}
\lim_{\beta\to\infty} 
\sum_{\sigma\in\mf R^a} 
\bb P_{\sigma} [ H_{\{{\bf 0}, {\bf +1}\} }  < H_{\bf -1} ] \;.
\end{equation}
By Lemma \ref{as11b}, this expression is equal to
$(1/2) |\mf R^{lc}| + (2/3) |\mf R^{li}| =
2|\Lambda_L|\{2+(4/3)(n_0-1)\}$, which completes the proof of the
first claim of the proposition.
\end{proof}

\begin{asser}
\label{as18}
We have that
\begin{equation*}
\lim_{\beta\to\infty} 
\frac{\Cap ({\bf -1}, {\bf 0})}{\Cap ({\bf -1}, \{{\bf 0} , 
{\bf  +1}\})}\;=\; 1 \;.
\end{equation*}
\end{asser}

\begin{proof}
Let $\sigma^*$ be a configuration in $\mf R^a$.
By the proof of Proposition \ref{mt3} up to \eqref{415},
\begin{equation*}
\lim_{\beta\to\infty} 
\frac{\Cap ({\bf -1}, {\bf 0})}{\mu_\beta(\sigma^*)}\;=\; 
\lim_{\beta\to\infty} 
\sum_{\sigma\in\mf R^a} \bb P_{\sigma} [ H_{\bf 0}  < H_{\bf -1} ] \;.
\end{equation*}
By \eqref{411}, this expression is equal to \eqref{415}. This
completes the proof of the assertion.
\end{proof}

\begin{proof}[Proof of Proposition \ref{0b1}]
Let $q(\sigma) = \bb P_ {\bf -1} [H_{\sigma} = H_{\mf B^+} ]$,
$\sigma\in\mf B^+$. By Assertion \ref{as3}, $q(\sigma)\to 0$ if
$\sigma\not\in\mf R^a$. Hence, by \eqref{eq09},
\begin{equation*}
\bb P_ {\bf -1} [H_{\bf +1} < H_{\bf 0} ]\;=\; \sum_{\sigma\in\mf B^+}
q(\sigma) \, \bb P_ {\sigma} [H_{\bf +1} < H_{\bf 0} ]
\;=\; \sum_{\sigma\in\mf R^a}
q(\sigma) \, \bb P_ {\sigma} [H_{\bf +1} < H_{\bf 0} ] \;+\; o(1) \;,
\end{equation*}
By \eqref{411}, for all $\sigma\in\mf R^a$, 
\begin{equation*}
\bb P_ {\sigma} [H_{\bf +1} < H_{\bf 0} ] \;=\;
\bb P_ {\sigma} [H_{\bf +1} < H_{\bf 0} \,,\,
H_{\{{\bf -1},{\bf 0}\}} < H_{\bf +1} ] \;=\;
\bb P_ {\sigma} [ H_{\bf -1} < H_{\bf +1} < H_{\bf 0}]\;.
\end{equation*}
Therefore, by the strong Markov property, 
\begin{equation*}
\bb P_ {\bf -1} [H_{\bf +1} < H_{\bf 0} ]\;=\;
\bb P_ {\bf -1} [H_{\bf +1} < H_{\bf 0} ] 
\sum_{\sigma\in\mf R^a} q(\sigma) \, \bb P_ {\sigma} [H_{\bf -1} <
H_{\{{\bf 0},{\bf +1}\}} ]\, \;+\; o(1) \;.
\end{equation*}
By Lemma \ref{as11b}, for $\sigma\in \mf R^l$,
$\limsup_{\beta\to\infty}\bb P_ {\sigma} [H_{\bf -1} < H_{\{{\bf
    0},{\bf +1}\}} ] \le 1/2$, which completes the proof of the
proposition.
\end{proof}

\begin{proof}[Proof of Proposition \ref{mt4}]
We prove the proposition when the chain starts from $\bf -1$, the
argument being analogous when it starts from $\bf 0$. Since the chains
hits $\mf B^+$ before reaching $\bf 0$ and $\mf R^l$, by the strong
Markov property,
\begin{equation*}
\bb P_{\bf -1} [ H_{\mf R^l} < H_{\bf 0}] \;=\;
\sum_{\sigma\in\mf B^+} \bb P_{\bf -1} [ H_{\sigma} = H_{\mf B^+} ] \,
\bb P_{\sigma} [ H_{\mf R^l} < H_{\bf 0}]\;. 
\end{equation*}
By Lemma \ref{as61}, this expression is equal to
\begin{equation*}
\big (1+ o(1)\big) \frac 1{|\mf R^a|} \Big\{ |\mf R^l| \,+\,
\sum_{\sigma\in\mf R^s} \bb P_{\sigma} [ H_{\mf R^l} < H_{\bf 0}] \Big\}\;. 
\end{equation*} 
By Assertions \ref{as9} and \ref{as10}, for all $\sigma\in\mf R^s$,
$\sigma'\in\mf R$ 
\begin{equation*}
\lim_{\beta\to\infty} \bb P_{\sigma} [ H_{\mf R} < H_{\mf R^l
  \cup\{{\bf -1}, {\bf 0}\}}]\;=\; 1\;, \quad
\lim_{\beta\to\infty} \bb P_{\sigma'} [ H_{\bf -1} < H_{\mf R^l
  \cup\{ {\bf 0}\}}]\;=\; 1 \;. 
\end{equation*} 
Therefore, for all $\sigma\in\mf R^s$,
\begin{equation*}
\lim_{\beta\to\infty} \bb P_{\sigma} [ H_{\bf -1} < H_{\mf R^l 
\cup\{ {\bf 0}\}}]\;=\; 1\;. 
\end{equation*} 
Hence, by the strong Markov property and by the first two identities
of this proof,
\begin{equation*}
\bb P_{\bf -1} [ H_{\mf R^l} < H_{\bf 0}] \;=\;
\big (1+ o(1)\big) \frac 1{|\mf R^a|} \Big\{ |\mf R^l| \,+\,
\sum_{\sigma\in\mf R^s} \bb P_{\bf -1} [ H_{\mf R^l} < H_{\bf 0}] \Big\}\;,
\end{equation*}
which completes the proof of the proposition.  
\end{proof}

\section{Proofs of Propositions \ref{mt3}.B and \ref{mt2}}
\label{sec4}

We examine in this section the metastable behavior of the Blume-Capel
model starting from $\bf 0$. The main observation is that the energy
barrier from $\bf 0$ to $\bf -1$ is larger that the one from $\bf 0$
to $\bf +1$. We may therefore ignore $\bf -1$ and argue by symmetry
that the passage from $\bf 0$ to $\bf +1$ is identical to the one from
$\bf -1$ to $\bf 0$.

In analogy to the notation introduced right before Assertion \ref{lem1},
let $\mf B_0$ be the set of configurations with $n_0(n_0+1)$ sites
with spins different from $0$, and let $\mf R_0$ be the subset of $\mf
B_0$ given by
\begin{equation*}
\mf R_0 \;=\; \big\{ \sigma\in \{0, +1\}^{\bb Z^2} : 
\{x : \sigma(x) \not = 0\} \text{ forms a } n_0\times (n_0+1) 
\text{ rectangle }\big\} \;.
\end{equation*}
Denote by $\mf R^+_0$ the set of configurations in $\{-1,0, +1\}^{\bb
  Z^2}$ in which there are $n_0(n_0+1)+1$ spins which are not equal to
$0$, and in which $n_0(n_0+1)$ spins of magnetization $+1$ form a
$n_0\times (n_0+1)$-rectangle. Finally, let $\mf R^a_0\subset \mf R^+_0$
be the set of configurations for which the remaining spin is a $+1$
spin attached to one of the sides of the rectangle, and let
\begin{equation*}
\mf B^+_0 \;=\; (\mf B_0 \setminus \mf R_0) \,\cup\, 
\mf  R^+_0 \;. 
\end{equation*}

\begin{asser}
\label{as62}
For every $\sigma\in \mf B^+_0$, $\lim_{\beta\to\infty} \bb P_ {\bf 0}
[H_{\sigma} = H_{\mf B^+_0} ] = |\mf R^a_0|^{-1} \mb 1\{\sigma\in\mf
R^a_0\}$. Moreover, if $\sigma^*$ represents a configuration in $\mf
R^a_0$, 
\begin{equation*}
\lim_{\beta\to\infty}
\frac{\Cap ({\bf 0}, \mf B^+_0)}{\mu_\beta(\sigma^*)} 
\;=\; |\mf R^a_0|\;.
\end{equation*}
\end{asser}

\begin{proof}
As in Assertion \ref{as3}, we may exclude all configurations
$\sigma\in \{0,+1\}^{\Lambda_L}$ which do not belong to $\mf R^a_0$. 
We may also exclude all configurations in $\mf B^+_0$ which have a
negative spin since by turning all negative spins into positive spins
we obtain a new configurations whose energy is strictly smaller than
the one of the original configuration. For the configurations in $\mf
R^a_0$ we may apply the arguments presented in the proof of Lemma
\ref{as61}. 
\end{proof}

Denote by $\mf R^{c}_0$, $\mf R^{i}_0$ the configurations of $\mf
R^{a}_0$ in which the extra particle is attached to the corner,
interior of the rectangle, respectively.  Denote by $\mf R^{l}_0$,
$\mf R^{s}_0$ the configurations of $\mf R^a_0$ in which the extra
particle is attached to one of the longest, shortest sides,
respectively, and let $\mf R^{lc}_0 = \mf R^{l}_0 \cap \ms R^{c}_0$,
$\mf R^{li}_0= \mf R^{l}_0 \cap \mf R^{i}_0$.  The proof of the next
assertion is analogous to the one of Lemma \ref{as11b} since it
concerns configurations with only $0$ and $+1$ spins.

\begin{asser}
\label{as63}
For $\sigma\in \mf R^{lc}_0$, $\sigma'\in \mf R^{li}_0$, and
$\sigma''\in \mf R^{s}_0$, 
\begin{align*}
& \lim_{\beta\to\infty} 
\bb P_{\sigma} [H_{\bf 0} = H_{\ms M}]  \;=\; 1/2
\quad \text{and}\quad
\lim_{\beta\to\infty} 
\bb P_{\sigma} [H_{\bf +1} = H_{\ms M}]  \;=\; 1/2\;, \\
& \quad \lim_{\beta\to\infty} 
\bb P_{\sigma'} [H_{\bf 0} = H_{\ms M}]  \;=\; 1/3
\quad \text{and}\quad
\lim_{\beta\to\infty} 
\bb P_{\sigma'} [H_{\bf +1} = H_{\ms M}]  \;=\; 2/3\;, \\
& \qquad \lim_{\beta\to\infty} 
\bb P_{\sigma''} [H_{\bf 0} = H_{\ms M}]  \;=\; 1\;.
\end{align*}
\end{asser}

The next claim follows from the previous two assertions.

\begin{asser}
\label{as65}
We have that
\begin{equation*}
\lim_{\beta\to\infty} \bb P_{\bf 0} [H_{\bf -1} < H_{\bf +1}]
\;=\; 0 \;.
\end{equation*}
\end{asser}

\begin{proof}
Since, starting from $\bf 0$, the set $\mf B^+_0$ is reached before
the process hits $\{{\bf - 1}, {\bf +1}\}$, by the strong Markov
property, 
\begin{equation*}
\lim_{\beta\to\infty} \bb P_{\bf 0} [H_{\bf -1} < H_{\bf +1}]
\;=\; \lim_{\beta\to\infty} \sum_{\sigma\in \mf B^+_0}
\bb P_{\bf 0} [H_{\sigma} = H_{\mf B^+_0}] \, \bb P_{\sigma} [H_{\bf -1}
< H_{\bf +1}]\;. 
\end{equation*}
By Assertion \ref{as62} and by the strong Markov property at time
$H_{\ms M}$, this expression is equal to
\begin{equation*}
\lim_{\beta\to\infty} \frac 1{|\mf R^a_0|} \sum_{\sigma\in \mf R^a_0}
\bb E_{\sigma} \Big[ \bb P_{\sigma(H_{\ms M})} [H_{\bf -1} < H_{\bf
  +1}]\,  \Big] 
\; =\;  c_0 \lim_{\beta\to\infty} 
\bb P_{\bf 0 } [H_{\bf -1} < H_{\bf +1}] \;. 
\end{equation*}
where we applied Assertion \ref{as63} to derive the last identity. In
this equation, $c_0 = \{|\mf R^{lc}_0|/2|\mf R^a_0| \} + \{|\mf
R^{li}_0|/3|\mf R^a_0|\} <1$. This completes the proof of the assertion.
\end{proof}

\begin{proof}[Proof of Proposition \ref{mt3}, Part B]
The proof is similar to the one of Part A, presented in the previous
section. As in \eqref{601}, we have that
\begin{equation*}
\Cap ({\bf 0} , \{{\bf -1}, {\bf +1}\}) \;=\;
\Cap ({\bf 0} , \mf B^+_0)
\sum_{\sigma\in\mf B^+_0} \bb P_{\bf 0} [ H_{\sigma} = H_{\mf B^+_0} ]  \,
\bb P_{\sigma} [ H_{\{{\bf -1}, {\bf +1}\} }  < H_{\bf 0} ] \;.
\end{equation*}
Hence, by Assertion \ref{as62}, for any configuration $\sigma^*\in\mf
R^a_0$,
\begin{equation*}
\lim_{\beta\to\infty} \frac{\Cap ({\bf 0} , \{{\bf -1}, {\bf +1}\})} 
{\mu_\beta(\sigma^*)} \;=\; |\mf R^a_0|\, \lim_{\beta\to\infty} 
\sum_{\sigma\in\mf B^+_0} \bb P_{\bf 0} [ H_{\sigma} = H_{\mf B^+_0} ]  \,
\bb P_{\sigma} [ H_{\{{\bf -1}, {\bf +1}\} }  < H_{\bf 0} ] \;.
\end{equation*}
By Assertion \ref{as62}, the right hand side is equal to
\begin{equation*}
\lim_{\beta\to\infty} 
\sum_{\sigma\in\mf R^a_0} 
\bb P_{\sigma} [ H_{\{{\bf -1}, {\bf +1}\} }  < H_{\bf 0} ] \;.
\end{equation*}
By Assertion \ref{as63}, this expression is equal to
$(1/2) |\mf R^{lc}_0| + (2/3) |\mf R^{li}_0| =
2|\Lambda_L|\{2+(4/3)(n_0-1)\}$, which completes the proof of the
second claim of the proposition.
\end{proof}

As in Assertion \ref{as18} we have that 
\begin{equation}
\label{603}
\lim_{\beta\to\infty} 
\frac{\Cap ({\bf 0}, {\bf +1})}{\Cap ({\bf 0}, \{{\bf -1} , 
{\bf  +1}\})}\;=\; 1 \;.
\end{equation}

\begin{asser}
\label{as64}
We have that
\begin{equation*}
\lim_{\beta\to\infty} 
\frac{\Cap ({\bf -1}, {\bf +1})}{\Cap ({\bf -1}, \{{\bf 0} , 
{\bf  +1}\})}\;=\; 1 \;.
\end{equation*}
\end{asser}

\begin{proof}
We repeat the proof of the part A of Proposition \ref{mt3} up
\eqref{415} to obtain that
\begin{equation*}
\lim_{\beta\to\infty} 
\frac{\Cap ({\bf -1}, {\bf +1})}{\mu_\beta(\sigma^*)}\;=\; 
\lim_{\beta\to\infty} 
\sum_{\sigma\in\mf R^a} \bb P_{\sigma} [ H_{\bf +1}  < H_{\bf -1} ] \;,
\end{equation*}
if $\sigma^*$ represents a configuration in $\mf R^a$. By \eqref{411}, this
expression is equal to
\begin{align*}
& \lim_{\beta\to\infty} \sum_{\sigma\in\mf R^a} \bb P_{\sigma} 
[ H_{\bf 0}  <  H_{\bf +1}  < H_{\bf -1} ] \\
& \qquad \;=\; \lim_{\beta\to\infty} \bb P_{\bf 0} [ H_{\bf +1}  < H_{\bf -1} ]
\sum_{\sigma\in\mf R^a} \bb P_{\sigma} 
[ H_{\bf 0}  <  H_{\{\bf -1 , \bf +1\}} ]\;.
\end{align*}
where we used the strong Markov property in the last step. By
Lemma \ref{as11b} and Assertion \ref{as65}, this limit is equal
to $(1/2) |\mf R^{lc}| + (2/3) |\mf R^{li}|$, which completes the
proof of the assertion.
\end{proof}

\begin{asser}
\label{500}
We have that
\begin{equation*}
\lim_{\beta\to\infty} \frac{\Cap({\bf +1}, \{{\bf -1}, {\bf 0}\})} 
{\Cap({\bf 0}, \{{\bf -1}, {\bf +1}\})} \;=\; 1\;.
\end{equation*}
\end{asser}

\begin{proof}
Indeed, by monotonicity of the capacity and by \eqref{mf06},
\begin{equation*}
\Cap({\bf +1}, {\bf 0}) \;\le\;
\Cap({\bf +1}, \{{\bf -1}, {\bf 0}\}) \;\le\;
\Cap({\bf +1}, {\bf 0}) \;+\; \Cap({\bf +1}, {\bf -1})\;.
\end{equation*}
By Assertion \ref{as64}, by \eqref{603}, and by Proposition \ref{mt3},
$\Cap({\bf +1}, {\bf -1})/ \Cap({\bf 0}, {\bf +1}) \to 0$ as
$\beta\uparrow\infty$. Hence,
\begin{equation*}
\lim_{\beta\to\infty} \frac{\Cap({\bf +1}, \{{\bf -1}, {\bf 0}\})} 
{\Cap({\bf 0}, {\bf +1})} \;=\; 1\;.
\end{equation*}
To complete the proof, it remains to recall \eqref{603}.
\end{proof}

We turn to the proof of Proposition \ref{mt2}.  We first show that the
assumption of Lemma \ref{mest1} are in force. Recall Remark \ref{rm1}.

\begin{asser}
\label{as15}
Consider two configurations $\sigma\not\in \ms M$ and $\eta\in\ms
M$. If $\bb H(\sigma) \le \bb H(\eta)$, then $\bb H(\xi_{\sigma, \ms
  M}) - \bb H(\sigma) < \bb H(\xi_{\sigma,\eta}) - \bb H(\eta)$. 
\end{asser}

\begin{proof}
We claim that for any configuration $\sigma\not\in \ms M$, $\bb
H(\xi_{\sigma, \ms M}) - \bb H(\sigma)\le 2-h$. To prove this claim it
is enough to exhibit a self-avoiding path from $\sigma$ to $\ms M$
whose energy is kept below $\bb H(\sigma) + 2-h$. This is
easy. Starting from $\sigma$ we may first reach the set $\ms S$ of
stable configurations through a path whose energy does not
increase. Denote by $\sigma^\star$ the configuration in $\ms S$
attained through this path. From $\sigma^\star$ we may reach the set
$\ms M$ by removing all small droplets (the ones whose smaller side
has length $n_0$ or less) and by increasing the large droplets (the
ones whose both sides have length at least $n_0+1$) in such a way that
the energy remains less than or equal to $\bb H(\sigma^\star) + 2-h$.
This proves the claim.

On the other hand, since $\bb H(\zeta) \ge \bb H(\eta) +4 -h$ for any
configuration $\zeta$ which differs from $\eta$ at one site, $\bb
H(\xi_{\sigma,\eta}) - \bb H(\eta) \ge 4 -h$, which proves the assertion.
\end{proof}

\begin{asser}
\label{as66}
We have that
\begin{equation*}
\lim_{\beta\to\infty} \frac{M ({\bf -1}) \, \bb P_{\bf -1} 
[H_{\bf 0} < H^+_{\{{\bf -1}, {\bf +1}\}}]}
{\Cap ({\bf -1} , \{{\bf 0}, {\bf +1}\}) }\;=\; 1 \;.
\end{equation*}
\end{asser}

\begin{proof}
Fix $\sigma^*$ in $\mf R^a$. In view of Proposition \ref{mt3}, it is
enough to show that
\begin{equation*}
\lim_{\beta\to\infty} \frac{M ({\bf -1}) \, \bb P_{\bf -1} 
[H_{\bf 0} < H^+_{\{{\bf -1}, {\bf +1}\}}]}
{\mu_\beta(\sigma^*)}\;=\; \frac{4(2n_0+1)}3\, |\Lambda_L|\;. 
\end{equation*}
In the proof of Proposition \ref{mt3}.A, replace $\Cap ({\bf -1} ,
\{{\bf 0}, {\bf +1}\})$ by the numerator appearing in the statement of
this assertion.  The proof is identical up to formula \eqref{415}. It
remains to estimate 
\begin{equation*}
\lim_{\beta\to\infty} \sum_{\sigma\in\mf R^a} 
\bb P_{\sigma} [ H_{\bf 0}  < H_{\{{\bf -1}, {\bf +1}\}} ]
\;\;\text{which is equal to}\;\;
\lim_{\beta\to\infty} \sum_{\sigma\in\mf R^a} 
\bb P_{\sigma} [ H_{\{{\bf 0}, {\bf +1}\}}  < H_{\bf -1} ]
\end{equation*}
in view of \eqref{411}.  This expression has been computed at the end
of the proof of Proposition \ref{mt3}.A, which completes the proof of
the assertion.
\end{proof}

\begin{proof}[Proof of Proposition \ref{mt2}]
We first assume that the chain starts from $\bf 0$.  By Lemma
\ref{mest1} and Assertion \ref{as15},
\begin{equation*}
\bb E_{\bf 0} [ H_{\bf +1}]  \; = \; \big(1+o(1)\big)\, 
\frac 1{\Cap ({\bf 0}, {\bf +1})} 
\Big\{ \mu_\beta ({\bf 0}) + \mu_\beta ({\bf -1}) 
\bb P_{\bf -1} [H_{\bf 0} < H_{\bf +1}] \Big\}\;.
\end{equation*}
Since the second term in the expression inside braces is bounded by
$\mu_\beta ({\bf -1})\prec \mu_\beta ({\bf 0})$, the expectation is
equal to $(1+o(1)) \mu_\beta ({\bf 0}) / \Cap ({\bf 0}, {\bf +1})$. To
complete the proof it remains to recall \eqref{603} and Proposition
\ref{mt3}. 

We turn to the case in which the chain starts from $\bf -1$. By Lemma
\ref{mest1},
\begin{equation*}
\bb E_{\bf -1} [ H_{\bf +1}]  \; = \; \big(1+o(1)\big)\,
\frac 1{\Cap ({\bf -1}, {\bf +1})} 
\Big\{ \mu_\beta ({\bf -1}) + \mu_\beta ({\bf 0}) 
\bb P_{\bf 0} [H_{\bf -1} < H_{\bf +1}] \Big\}\;.
\end{equation*}

By \eqref{mf01}, by reversibility and by definition of the capacity,
\begin{align*}
& \mu_\beta ({\bf 0}) \, \bb P_{\bf 0} [H_{\bf -1} < H_{\bf +1}] \; =\;
\frac{M ({\bf 0}) \, \bb P_{\bf 0} [H_{\bf -1} < H^+_{\{{\bf 0}, {\bf +1}\}}] }
{\lambda_\beta ({\bf 0})\, \bb P_{\bf 0} [H_{\{{\bf -1}, {\bf +1}\}} < H^+_{\bf 0}]} \\
& \qquad =\;
\frac{M ({\bf -1}) \, \bb P_{\bf -1} [H_{\bf 0} < H^+_{\{{\bf -1}, {\bf +1}\}}] }
{\lambda_\beta ({\bf 0})\, \bb P_{\bf 0} [H_{\{{\bf -1}, {\bf +1}\}} < H^+_{\bf 0}]}
\;=\; \frac{\mu_\beta ({\bf 0}) \,  M ({\bf -1}) \, \bb P_{\bf -1} 
[H_{\bf 0} < H^+_{\{{\bf -1}, {\bf +1}\}}] }
{\Cap ( {\bf 0} ,  \{{\bf -1}, {\bf +1}\})}\;\cdot
\end{align*}
Hence, by Assertion \ref{as66},
\begin{equation*}
\bb E_{\bf -1} [ H_{\bf +1}]  \; = \; \big(1+o(1)\big)\, 
\frac {\mu_\beta ({\bf -1})}{\Cap ({\bf -1}, {\bf +1})}
\Big\{ 1 + \frac{\mu_\beta ({\bf 0}) \, \Cap ({\bf -1} , \{{\bf 0}, {\bf +1}\}) }
{\mu_\beta ({\bf -1}) \, 
\Cap ( {\bf 0} ,  \{{\bf -1}, {\bf +1}\})} \Big\}\;. 
\end{equation*}
To complete the proof it remains to recall the statements of
Proposition \ref{mt3} and Assertion \ref{as64}.
\end{proof}

\section{The hitting time of  ${\bf 0}$ starting from $\bf -1$} 
\label{sec6}

We prove in this section the identities \eqref{eq04} and
\eqref{eq03}. We start with \eqref{eq04}. By Lemma \ref{mest1} and
Assertion \ref{as15},
\begin{equation*}
\bb E_{\bf -1}[H_0] \;=\; \big(1+o(1)\big) \, 
\frac{1}{\textnormal{cap}({\bf -1},\textbf{0})} \,
\Big\{ \mu_\beta({\bf -1}) + \mu_\beta(\textbf{+1}) 
\bb P_\textbf{+1} [ H_{\bf -1} < H_\textbf{0} ] \Big\}\;.  
\end{equation*}
By \eqref{mf01} and the first identity in \eqref{mf05}, and by
reversibility, the second term inside braces is equal to
\begin{equation*}
\mu_\beta(\textbf{+1}) \, M({\bf +1})
\frac{\bb P_\textbf{+1} [ H_{\bf -1} < H^+_{\{ {\bf 0} , {\bf +1}\}} ] }
{\Cap ( {\bf +1} , \{ {\bf -1} , {\bf 0}\})}\;=\;
\mu_\beta(\textbf{+1}) \, M({\bf -1})
\frac{\bb P_{\bf -1} [ H_\textbf{+1} < H^+_{\{ {\bf -1} , {\bf 0}\}} ] }
{\Cap ( {\bf +1} , \{ {\bf -1} , {\bf 0}\})}\;\cdot
\end{equation*}
By the first identity in \eqref{mf05}, this expression is equal to
\begin{equation*}
\mu_\beta(\textbf{+1}) \,
\frac{ \textnormal{cap} ({\bf -1}, \lbrace \textbf{0}, 
\textbf{+1} \rbrace) }
{\textnormal{cap} (\textbf{+1}, \lbrace \textbf{0}, 
{\bf -1} \rbrace)} \, \bb P_{\bf -1} [H_\textbf{+1} <
H_{\textbf{0}}] \;. 
\end{equation*}
By Assertion \ref{500} and Proposition \ref{mt3}, we may replace the
ratio of the capacities by $\mu_\beta({\bf
  -1})/\mu_\beta(\textbf{0})$.  Hence,
\begin{equation*}
\bb E_{{\bf -1}} [ H_\textbf{0}] \; = \; \big(1+o(1)\big) \,
\frac{\mu_\beta({\bf -1})} {\textnormal{cap}({\bf -1},\textbf{0})} 
\Big\{  1 +  \frac{\mu_\beta(\textbf{+1})}{\mu_\beta(\textbf{0})}\,
\bb P_{{\bf -1}} [H_\textbf{+1} <H_{\textbf{0}}] \Big\} \;.
\end{equation*}
To complete the proof of \eqref{eq04}, it remains to recall the
definition of $\theta_\beta$, the statement of Assertion \ref{as18},
and the one of Lemma \ref{ll1} below.

\begin{lemma}
\label{ll1}
We have that
\begin{equation*}
\lim_{\beta\to \infty} \frac{\mu_\beta ({\bf +1})}{\mu_\beta ({\bf
    0})} \, \bb P_{\bf -1} [H_{\bf +1} < H_{\bf 0}] \;=\; \infty\;.  
\end{equation*}
\end{lemma}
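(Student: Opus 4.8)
The plan is to estimate the two factors $\mu_\beta({\bf +1})/\mu_\beta({\bf 0})$ and $\bb P_{\bf -1}[H_{\bf +1} < H_{\bf 0}]$ separately on the exponential scale and show that the first one blows up faster than the second one decays. For the measure ratio, by definition of the Gibbs measure $\mu_\beta({\bf +1})/\mu_\beta({\bf 0}) = \exp\{\beta[\bb H({\bf 0}) - \bb H({\bf +1})]\}$, and since all spins differ by one unit, $\bb H({\bf 0}) - \bb H({\bf +1}) = h|\Lambda_L|$. Hence this ratio equals $\exp\{\beta h |\Lambda_L|\}$, which grows exponentially in $\beta$ at rate $h|\Lambda_L|$.

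For the probability, I would use inequality \eqref{mf02} from Section \ref{sec2} together with the capacity estimates already available. Write
\begin{equation*}
\bb P_{\bf -1}[H_{\bf +1} < H_{\bf 0}] \;\le\; \frac{\Cap({\bf -1}, {\bf +1})}{\Cap({\bf -1}, {\bf 0})}\;.
\end{equation*}
By Proposition \ref{mt3}, $\Cap({\bf -1},\{{\bf 0},{\bf +1}\}) \approx \mu_\beta(\eta)$ for $\eta \in \mf R^l$, and by Assertion \ref{as18} the capacity $\Cap({\bf -1}, {\bf 0})$ has the same asymptotics, so $\Cap({\bf -1}, {\bf 0}) \approx \mu_\beta(\eta)$. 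Similarly, by Assertion \ref{as64}, $\Cap({\bf -1}, {\bf +1}) \approx \mu_\beta(\eta)$ as well — but this crude comparison only gives that the ratio is $\approx 1$, which is not enough. Instead I would go back to the path characterization \eqref{mf04}, writing $\Cap({\bf -1}, {\bf +1}) \approx Z_\beta^{-1} e^{-\beta \bb H(\xi_{{\bf -1}, {\bf +1}})}$ where $\xi_{{\bf -1}, {\bf +1}}$ is the highest-energy configuration on the optimal path from ${\bf -1}$ to ${\bf +1}$; since any such path must pass near ${\bf 0}$ (the mountain pass from ${\bf -1}$ to ${\bf +1}$ is strictly higher than the one from ${\bf -1}$ to $\{{\bf 0},{\bf +1}\}$, because after reaching a critical droplet of zeros the chain must still nucleate a droplet of $+1$'s inside the sea of zeros), one has $\bb H(\xi_{{\bf -1},{\bf +1}}) > \bb H(\xi_{{\bf -1}, \{{\bf 0},{\bf +1}\}})$, with the gap being the extra energy $h|\Lambda_L|$ roughly needed — more precisely, $\bb H(\xi_{{\bf -1},{\bf +1}}) = \bb H(\eta) + [\bb H({\bf 0}) - \bb H({\bf +1})]$ up to lower order terms coming from the second critical droplet. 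Combining, $\Cap({\bf -1}, {\bf +1})/\Cap({\bf -1}, {\bf 0}) \preceq \mu_\beta({\bf +1})/\mu_\beta({\bf 0}) \cdot e^{-\beta c}$ for some constant $c>0$ coming from the subexponential part of the second nucleation cost, so that $(\mu_\beta({\bf +1})/\mu_\beta({\bf 0})) \, \bb P_{\bf -1}[H_{\bf +1}<H_{\bf 0}]$ is bounded below by $e^{\beta c'}$ for some $c'>0$ and diverges.

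The main obstacle is controlling $\bb P_{\bf -1}[H_{\bf +1} < H_{\bf 0}]$ \emph{from below} (the inequality \eqref{mf02} only gives an upper bound), since $(\mu_\beta({\bf +1})/\mu_\beta({\bf 0})) \bb P_{\bf -1}[H_{\bf +1}<H_{\bf 0}]$ could conceivably be small if the probability decayed faster than $\mu_\beta({\bf 0})/\mu_\beta({\bf +1}) = e^{-\beta h|\Lambda_L|}$. To rule this out I would exhibit an explicit path realizing the event: starting from ${\bf -1}$, nucleate and grow a droplet of $0$'s all the way to ${\bf 0}$ — but stop just before the very last flip completing ${\bf 0}$, so one is at a configuration $\eta$ with a single $-1$ spin in a sea of $0$'s — then from $\eta$ directly nucleate a droplet of $+1$'s and grow it to ${\bf +1}$ without ever first completing ${\bf 0}$. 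The probability of this event is, on the exponential scale, $\exp\{-\beta(\bb H(\xi_{{\bf -1},{\bf +1}}) - \bb H({\bf -1}))\}$, and comparing with $\mu_\beta({\bf 0})/\mu_\beta({\bf +1}) = \exp\{-\beta(\bb H({\bf 0}) - \bb H({\bf +1}))\}$ one checks that the former exponent is smaller than the latter (the two critical droplet costs are $O(1)$ while $h|\Lambda_L|$ is the dominant term, and crucially the path energy at the second saddle already exceeds $\bb H({\bf 0})$ by the remaining $+1$-nucleation cost). Thus $\bb P_{\bf -1}[H_{\bf +1}<H_{\bf 0}] \gg \mu_\beta({\bf 0})/\mu_\beta({\bf +1})$, which is exactly the claim. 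The careful bookkeeping of these $O(1)$ droplet-energy corrections, and verifying the strict inequality between the two exponents, is the one genuinely delicate point; everything else reduces to the already-established capacity asymptotics of Propositions \ref{mt3}, Assertions \ref{as18}, \ref{as64}, and \ref{500}.
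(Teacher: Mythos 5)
Your overall plan coincides with the paper's: compute $\mu_\beta({\bf +1})/\mu_\beta({\bf 0})=e^{\beta h|\Lambda_L|}$ exactly, and bound $\bb P_{\bf -1}[H_{\bf +1}<H_{\bf 0}]$ from below by exhibiting an explicit path that grows a droplet of $0$-spins until a single non-zero spin remains and then nucleates the $+1$-droplet from that spin, so that ${\bf 0}$ is never visited. This is precisely the path used in the paper (the configurations $\xi_0,\xi_1$ and the switch of the last $-1$-spin directly to $+1$). However, two points in your write-up do not survive scrutiny. First, the claim in your middle paragraph that $\bb H(\xi_{{\bf -1},{\bf +1}})>\bb H(\xi_{{\bf -1},\{{\bf 0},{\bf +1}\}})$ ``with the gap being the extra energy $h|\Lambda_L|$'' is false: the optimal path from ${\bf -1}$ to ${\bf +1}$ relaxes to ${\bf 0}$ after the first saddle, and the second saddle sits at height $\bb H({\bf 0})+\Gamma<\bb H({\bf -1})+\Gamma$, where $\Gamma$ is the common barrier of \eqref{200}; hence the two saddle heights coincide and $\Cap({\bf -1},{\bf +1})\approx\Cap({\bf -1},\{{\bf 0},{\bf +1}\})$, which is exactly Assertion \ref{as64}. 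That paragraph is in any case a dead end, since an upper bound on the probability cannot prove divergence, as you note yourself.

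The genuine gap is at the step you defer as ``bookkeeping.'' You assert that the probability of the path event is, on the exponential scale, $\exp\{-\beta(\bb H(\xi_{{\bf -1},{\bf +1}})-\bb H({\bf -1}))\}$. This is not a valid principle for an event of the form $\{H_A<H_B\}$ when the optimal path to $A$ passes arbitrarily close to $B$ — which is exactly the present situation. The probability of traversing a trajectory that climbs the first barrier, descends deep into the well of ${\bf 0}$, and climbs the second barrier without relaxing at ${\bf 0}$ is controlled by the sum of the uphill increments, compensated by the holding rates at the stable configurations visited along the way; it is not controlled by the maximum of the energy along the path. Making this lower bound rigorous is the entire substance of the paper's proof: one writes $\bb P_{\bf -1}[H_{\bf +1}<H_{\bf 0}]$ via \eqref{mf01} as in \eqref{cl4}, decomposes the numerator along the trace chain on $\ms S$ as in \eqref{cl5}, and estimates the three factors by reversibility of the trace chain (Assertions \ref{asl1}--\ref{asl3}). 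The rate that emerges is $4(n_0-1)-[n_0(n_0+1)-2]h$, i.e.\ $\Gamma$ diminished by the sub-barrier $8-3h$ to the first stable droplet — not $\Gamma$ itself — and the final comparison with $h|\Lambda_L|$ then uses $L>n_0+3$. Your claimed rate would, if true, also suffice numerically, but no argument is given for it, and the heuristic you invoke to obtain it is incorrect as stated; so the one step you identify as delicate is in fact the whole proof and remains unproved in your proposal.
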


The proof of this lemma is divided in several assertions.
By \eqref{mf01}, and by the definition of the capacity,
\begin{equation}
\label{cl4}
\bb P_{\bf -1} [H_{\bf +1} < H_{\bf 0}]  \;=\; 
\frac{\mu_\beta({\bf -1}) \,\lambda_\beta({\bf -1}) \, \bb P_{\bf -1}
  [H_{\bf +1} < H^+_{\{{\bf -1}, {\bf 0}\}}]}
{\Cap ({\bf -1}, \{{\bf -1}, {\bf 0}\})}\;\cdot
\end{equation}

We estimate the probability appearing in the numerator. This is done
by proposing a path from $\bf -1$ to $\bf +1$ which does not visit
$\bf 0$. The obvious path is the optimal one from $\bf -1$ to $\bf 0$
juxtaposed with the optimal one from $\bf 0$ $\bf +1$, modified not to
visit $\bf 0$.

We describe the path in $\ms S$, the set of stable configurations
introduced in \eqref{stable}. Let $\xi_0\in \ms S$ be the
configuration formed by a $L\times (L-2)$ band of $0$-spins and a
$L\times 2$ band of $-1$ spins. The first piece of the path, denoted
by $\gamma_0$, connects $\bf -1$ to $\xi_0$. It is formed by creating
and increasing a droplet of $0$-spins in a sea of $-1$-spins.

Let $ \gamma_0 = ({\bf -1} = \eta_0,\dots,\eta_{N} = \xi_0)$, where
\begin{itemize}
\item $N=2(L-3)$,
\item $\eta_1$ is a $2 \times 2$ square of $0$-spins in a background
  of negative spins,
\item For $k<N-1$, $\eta_{k+1}$ is obtained from $\eta_k$ adding a
  line of $0$-spins to transform a $j\times j$-square of $0$-spins
  into a $(j+1)\times j$-square of $0$-spins, or to transform
  $(j+1)\times j$-square of $0$-spins into a $j\times j$-square of
  $0$-spins.
\end{itemize}
Note that $\eta_N$ is obtained from $\eta_{N-1}$ transforming a
$(L-2)\times(L-2)$ rectangle into a $L\times (L-2)$ band.

Let $\xi_1\in \ms S$ be the configuration formed by a $2\times 2$
square of $+1$-spins in a background of $0$-spins. The last piece of
the path, denoted by $\gamma_1$, connects $\xi_0$ to $\bf +1$ and is
constructed in a similar way as $\gamma_0$ so that $\gamma_1 = (\xi_1
= \zeta_0, \dots, \zeta_{N} ={\bf +1})$. Note that the length of
$\gamma_1$ is the same as the one of $\gamma_0$.

Denote by $q(\eta,\xi)$ the jump probabilities of the trace of
$\sigma(t)$ on $\ms S$: $q(\eta,\xi) = \bb P_{\eta}[H_\xi = H_{\ms S
  \setminus \{\eta\}}]$. Let
\begin{equation*}
q(\gamma_0) \;=\; \prod_{k=0}^{N-1} q(\eta_k,\eta_{k+1})\; , \quad
q(\gamma_1) \;=\; \prod_{k=0}^{N-1} q(\zeta_k,\zeta_{k+1})\;, 
\end{equation*}
so that
\begin{equation}
\label{cl5}
\bb P_{\bf -1} \big [H_{\bf +1} < H^+_{\{{\bf -1}, {\bf 0}\}} \big] \;\ge\;
q(\gamma_0) \, q(\xi_0, \xi_1) \, q(\gamma_1) \;.
\end{equation}
We estimate the three terms on the right hand side.

\begin{asser}
\label{asl1}
There exists a positive constant $c_0$, independent of $\beta$, such
that 
\begin{equation*}
q(\gamma_0) \;\ge\;  c_0\,
e^{ - \beta \{ 4(n_0 -1) - [ n_0(n_0+1) - 2] h\} } \;.
\end{equation*}
\end{asser}

\begin{proof}
By the arguments presented in the proof of Assertion \ref{as10}, there
exists a positive constant $c_0$, independent of $\beta$, such that
$q(\eta_k,\eta_{k+1})\ge c_0$ if $k\ge 2n_0-3$. Thus,
\begin{equation*}
q(\gamma_0) \;\ge \; c_0 \prod_{k=0}^{2(n_0-2)} q(\eta_k,\eta_{k+1})\;,
\end{equation*}
and $\eta_{2n_0-3}$ is a $(n_0+1)\times(n_0+1)$ square of $0$ spins in
a sea of $-1$-spins. 

Denote by $\lambda_{\ms S}$ the holding rates of the trace of
$\sigma(t)$ on $\ms S$, by $\mu_{\ms S}$ the invariant probability
measure, and let $M_{\ms S} (\eta)= \lambda_{\ms S} (\eta) \mu_{\ms
  S}(\eta)$. The measure $M_{\ms S}$ is reversible for the
dicrete-time chain which jumps from $\eta$ to $\xi$ with probability
$q(\eta,\xi)$. 

By the proof of Assertion \ref{as10}, there exists a positive constant
$c_0$, independent of $\beta$, such that $q(\eta_{k+1}, \eta_k)\ge
c_0$ if $k< 2(n_0-2)$. Thus, multiplying and dividing by $M_{\ms
  S}({\bf -1})$, by reversibility
\begin{align*}
\prod_{k=0}^{2(n_0-2)} q(\eta_k,\eta_{k+1}) \; & =\; 
\frac{M_{\ms S}(\eta_{2(n_0-2)})}{M_{\ms S}({\bf -1})} \prod_{k=0}^{2n_0-5}
q(\eta_{k+1}, \eta_k) \; q(\eta_{2(n_0-2)},\eta_{2n_0-3}) \\
\; & \ge\; c_0\, \frac{M_{\ms S}(\eta_{2(n_0-2)})}{M_{\ms S}({\bf -1})}\, 
q(\eta_{2(n_0-2)},\eta_{2n_0-3})\;,
\end{align*}
where the configuration $\eta_{2(n_0-2)}$ is a $(n_0+1)\times n_0$
rectangle of $0$-spins.

Recall that $M_{\ms S} (\eta) = \mu_{\ms S} (\eta) \, \lambda_{\ms S}
(\eta)$. Since $\mu_{\ms S} (\eta) = \mu_{\beta} (\eta)/\mu_{\beta}
(\ms S)$, by \cite[Proposition 6.1]{bl2}, for any $\eta\in \ms S$,
\begin{equation}
\label{cl2}
M_{\ms S} (\eta) \;=\;
\frac{\mu_{\beta} (\eta)}{\mu_{\beta}
(\ms S)} \, \lambda _{\beta}(\eta) \, \bb P_{\eta} 
\big [H_{\ms S\setminus \{\eta\}}< H^+_{\eta} \big] \;=\; 
\frac{\Cap (\eta,\ms S\setminus \{\eta\})}{\mu_{\beta} (\ms S)}\;\cdot 
\end{equation}

We claim that
\begin{equation}
\label{cl1}
M_{\ms S} (\eta_{2(n_0-2)}) \,  q(\eta_{2(n_0-2)},\eta_{2n_0-3}) 
\;\ge\; c_0 \, \mu_{\ms S}(\eta_{2(n_0-2)}) \, e^{-\beta (2 - h)}\;. 
\end{equation}
To keep notation simple, let $\eta=\eta_{2(n_0-2)}$, $\xi =
\eta_{2n_0-3}$. By definition of $q$ and by \eqref{mf01}, the jump
probability appearing on the left hand side is equal to
\begin{equation*}
\bb P_{\eta} \big[ H_{\xi} = H_{\ms S \setminus \{\eta\}}\big] \;=\;
\frac{\mu_\beta(\eta) \, \lambda_\beta(\eta) \, \bb P_{\eta} \big[ H_{\xi} = H^+_{\ms S}\big]}
{\Cap (\eta , \ms S \setminus \{\eta\})}\;\cdot
\end{equation*}
The denominator cancels the numerator in \eqref{cl2}.
On the other hand, to reach $\xi$ from $\eta$ without returning to
$\eta$, the simplest way consists in creating a $0$-spin attached to
the longer side of the rectangle and to build a line of $0$-spins from
this first one. Only the first creation has a cost which vanishes as
$\beta\uparrow\infty$. Hence, $\lambda_\beta(\eta) \, \bb P_{\eta}
\big[ H_{\xi} = H^+_{\ms S}\big] \ge c_0 R_\beta(\eta, \eta')$ where
$\eta'$ is a critical configuration in $\mf R^l$. This completes the
proof of \eqref{cl1} since $R_\beta(\eta, \eta') =
\exp\{-\beta(2-h)\}$.

It remains to estimate $M_{\ms S}({\bf -1})$. Recall
\eqref{mf04}. Since $\xi_{{\bf -1},\ms S\setminus \{{\bf -1}\}}$ is
the configuration with three $0$-spins included in a $2\times 2$
square, $\Cap({\bf -1},\ms S\setminus \{{\bf -1}\}) \le C_0 \exp\{
-\beta [8-3h] \} \mu_\beta({\bf -1})$. Hence, by \eqref{cl2},
\begin{equation}
\label{claim2}
M_{\ms S}({\bf -1}) \;\le\; C_0\, e^{ -\beta (8-3h)} \, \mu_{\ms S}({\bf
  -1})\; .
\end{equation}

Putting together all previous estimates, we obtain that
\begin{equation*}
q(\gamma_0) \;\ge\; c_0 \, \frac{\mu_{\beta}(\eta_{2(n_0-2)})}
{\mu_\beta({\bf -1})} \,
 e^{-\beta (2 - h)} \, e^{ \beta (8-3h)} \;,
\end{equation*}
which completes the proof of the assertion in view of the definition
of $\eta_{2(n_0-2)}$.
\end{proof}

Next result is proved similarly.

\begin{asser}
\label{asl2}
There exists a positive constant $c_0$, independent of $\beta$, such
that 
\begin{equation*}
q(\gamma_1) \;\ge\;  c_0\,
e^{ - \beta \{ 4(n_0 -1) - [ n_0(n_0+1) - 2] h\} } \;.
\end{equation*}
\end{asser}

We turn to the probability $q(\xi_0, \xi_1)$. Recall that $\xi_0 $ is
the configuration formed by a $L\times (L-2)$ band of $0$-spins and a
$L\times 2$ band of $-1$ spins, and that $\xi_1$ is the configuration
formed by a $2\times 2$ square of $+1$-spins in a background of
$0$-spins.

\begin{asser}
\label{asl3}
There exists a positive constant $c_0$, independent of $\beta$, such
that 
\begin{equation*}
q(\xi_0, \xi_1) \;\ge\;  c_0\, e^{-2\beta [2-h]}\;.
\end{equation*}
\end{asser}

\begin{proof}
By definition of $q$ and by \eqref{mf01},
\begin{equation*}
q(\xi_0, \xi_1) \;=\; \bb P_{\xi_0} \big[ H_{\xi_1} = H_{\ms S
  \setminus \{\xi_0\}} \big] \;=\; \frac
{\mu_\beta(\xi_0)\, \lambda_\beta(\xi_0)\, 
\bb P_{\xi_0} \big[ H_{\xi_1} = H^+_{\ms S} \big]}
{\Cap (\xi_0 , \ms S \setminus \{\xi_0\})}\;\cdot
\end{equation*}

We claim that
\begin{equation}
\label{cl3}
\bb P_{\xi_0} \big[ H_{\xi_1} = H^+_{\ms S} \big] \;\ge\; c_0\,
e^{-2\beta [2-h]} \;.  
\end{equation}
To estimate this probability, we propose a path $\gamma_3$ from
$\xi_0$ to $\xi_1$ which avoids $\ms S$. The path consists in filling
the $-1$-spins with $0$-spins, until one $-1$-spin is left. At this
point, to avoid the configuration $\bf 0$, we switch this $-1$-spin to
$+1$. To complete the path we create a $2\times 2$ square of
$+1$-spins from the first $+1$-spin, as in the optimal path from $\bf
0$ to $\xi_1$.

Hence, $\gamma_3$ as length $2L +3$. Denote this path by
$\gamma_3=(\xi_0= \eta'_0, \eta'_1, \dots, \eta'_{2L +
  3}=\xi_1)$. From $\eta'_0$ to $\eta'_{2L-2}$ the next configurations
is obtained by flipping a $-1$ spin to a $0$-spin as in an optimal
path from $\xi_0$ to $\bf 0$. In this piece of the path, all jumps
have a probability bounded below by a positive constant. Therefore,
there exists a positive constant $c_0$, independent of $\beta$, such
that
\begin{equation*}
\bb P_{\xi_0} \big[ H_{\xi_1} = H^+_{\ms S} \big] \;\ge\;
\prod_{j=0}^{2L+2} p_\beta(\eta'_j, \eta'_{j+1}) \;\ge\;
c_0  \, \prod_{j=2L-1}^{2L+2} p_\beta(\eta'_j, \eta'_{j+1}) \;. 
\end{equation*}
The first and the last probabilities in this product,
$p_\beta(\eta'_{2L-1}, \eta'_{2L})$ and $p_\beta(\eta'_{2L+2},
\eta'_{2L+3})$, are also bounded below by a positive constant. The
other ones can be estimated easily, proving \eqref{cl3}.

By \eqref{mf04}, $\Cap (\xi_0 , \ms S \setminus
\{\xi_0\})/\mu_\beta(\xi_0)$ is bounded above by $C_0 \exp \{-\beta
[2-h]\}$, while an elementary computation shows that
$\lambda_\beta(\xi_0)$ is bounded below by $c_0 \exp \{-\beta
[2-h]\}$. This completes the proof of the assertion.
\end{proof}

By \eqref{cl5} and Assertions \ref{asl1}, \ref{asl2} and \ref{asl3}, 
\begin{equation}
\label{cl6}
\bb P_{\bf -1} \big [H_{\bf +1} < H^+_{\{{\bf -1}, {\bf 0}\}} \big]
\;\ge\; c_0\, e^{ - 2 \beta \{ 2(2n_0 -1) - [ n_0(n_0+1) - 1] h\} } \;.
\end{equation}

\begin{proof}[Proof of Lemma \ref{ll1}]
Since $\lambda({\bf -1}) \ge c_0 e^{-\beta [4-h]}$, by \eqref{cl4},
\eqref{cl6},
\begin{equation*}
\bb P_{\bf -1} [H_{\bf +1} < H_{\bf 0}]  \;\ge \; c_0\, 
\frac{\mu_\beta({\bf -1})}
{\Cap ({\bf -1}, \{{\bf -1}, {\bf 0}\})}\,
e^{ - \beta \{ 8 n_0 - [ 2 n_0(n_0+1) - 1] h\} }\;.
\end{equation*}
Therefore, by Proposition \ref{mt3},
\begin{equation*}
\frac{\mu_\beta ({\bf +1})}{\mu_\beta ({\bf
    0})} \, \bb P_{\bf -1} [H_{\bf +1} < H_{\bf 0}] \;\ge\;
c_0\, e^{\beta L^2}\, 
e^{ - \beta \{ 4 (n_0-1)  - [ n_0(n_0+1) - 2] h\} }\;.
\end{equation*}

It remains to show that $L^2 > 4 (n_0-1)  - [ n_0(n_0+1) - 2] h$.
By definition of $n_0$, $n_0 h >2-h$, so that $4 (n_0-1)  - [
n_0(n_0+1) - 2] h \le 2n_0 - 6 + hn_0 +3h$. As $hn_0<2$ and $h< 1$,
this expression is less than or equal to $2n_0$. This expression is
smaller than $L^2$ because $L\ge 2$ and $L>n_0$.
\end{proof}	

\section{Proof of Theorem \ref{mt1b}}

The statement of Theorem \ref{mt1b} follows from Propositions
\ref{c-trb} and \ref{neglb} below and from Theorem 5.1 in
\cite{l-soft}.  We start deriving some consequences of the assumption
\eqref{cond}. Clearly, it follows from \eqref{cond} and from
\eqref{mf04} that for all $\eta\in\ms M$ and $\sigma\in\ms V_\eta$,
$\sigma\not =\eta$,
\begin{equation}
\label{cond3}
\frac{\mu_\beta(\eta)}{\Cap (\sigma, \eta)} \;\prec\; \theta_\beta\;.
\end{equation}

\begin{asser}
\label{as52}
For all $\eta\in\ms M$ and $\sigma\in\ms V_\eta$, $\sigma\not =\eta$,
\begin{equation*}
\mu_\beta(\sigma) \;\prec\; \mu_\beta(\eta) \;, \quad
\Cap (\eta , \ms M \setminus \{\eta\}) \; \prec\; \Cap (\sigma, \eta)
\;.
\end{equation*}
\end{asser}

\begin{proof}
The first bound is a straightforward consequence of the hypothesis
$\bb H(\sigma)>\bb H(\eta)$. In view of \eqref{mf04}, to prove the
second bound we have to show that $\bb H (\sigma, \eta) < \bb H(\eta ,
\ms M \setminus \{\eta\})$. The case $\eta= {\bf -1}$ is a consequence
of the second hypothesis in \eqref{cond}, as the case $\eta= {\bf 0}$
if one recalls \eqref{200}. It remains to consider the case $\eta=
{\bf +1}$. By condition \eqref{cond} and \eqref{200},
\begin{equation*}
\bb H (\sigma, {\bf +1}) \;<\; \bb H({\bf +1}) \;+\; 
\bb H({\bf 0} , \{{\bf -1},{\bf  +1}\}) - \bb H({\bf 0})
\;<\; \bb H({\bf 0} , \{{\bf -1},{\bf  +1}\}) \;.
\end{equation*}
By Assertion \ref{500} and by \eqref{mf04}, $\bb H({\bf 0} , \{{\bf
  -1},{\bf +1}\}) = \bb H({\bf +1}, \{{\bf -1},{\bf 0}\})$. Therefore,
\begin{equation*}
\bb H (\sigma, {\bf +1}) \;<\; \bb H({\bf +1}, \{{\bf -1},{\bf 0}\})\;,
\end{equation*}
which proves the second claim of the assertion.
\end{proof}

\begin{asser}
\label{as51}
For all $\eta\not = \xi\in \ms M$ and for all $\sigma\in \ms
V_{\eta}$, $\sigma'\in \ms V_{\xi}$,
\begin{equation*}
\Cap (\sigma , \sigma') \;\approx\; \Cap (\eta, \xi) \;.
\end{equation*}
\end{asser}

\begin{proof}
Fix $\eta\not = \xi\in \ms M$ and $\sigma\in \ms V_{\eta}$,
$\sigma'\in \ms V_{\xi}$.  We need to prove that $\bb H (\eta, \xi) =
\bb H(\sigma , \sigma')$.  On the one hand, by definition, $\bb
H(\sigma , \sigma') \le \max \{\bb H(\sigma , \eta), \bb H(\eta ,
\eta'), \bb H(\eta' , \sigma') \}$. By the proof of Assertion
\ref{as52}, $\bb H(\sigma , \eta)< \bb H(\eta ,\ms M \setminus
\{\eta\})$, with a similar inequality replacing $\sigma$, $\eta$ by
$\sigma'$, $\eta'$, respectively. Since $\bb H(\ms A, \ms B)$ is
decreasing in each variable, $\bb H(\eta ,\ms M \setminus \{\eta\})$
and $\bb H(\eta' ,\ms M \setminus \{\eta'\})$ are less than or equal
to $\bb H(\eta ,\eta')$, which shows that $\bb H(\sigma , \sigma') \le
\bb H(\eta , \eta')$.

Conversely, $\bb H(\eta , \eta') \le \max \{\bb H(\eta, \sigma), \bb
H(\sigma , \sigma'), \bb H(\sigma' , \eta') \}$. By the previous
paragraph, $\bb H(\eta, \sigma) < \bb H(\eta ,\eta')$ and $\bb
H(\sigma' , \eta') < \bb H(\eta ,\eta')$ so that $\bb H(\eta , \eta')
\le \bb H(\sigma , \sigma')$. This completes the proof of the
assertion. 
\end{proof}

We conclude this preamble with two simple remarks.  Fix $\eta\in\ms M$
and $\sigma\in\ms V_\eta$. By \eqref{mf03} and Assertion \ref{as51},
\begin{equation*}
\Cap (\sigma, \cup_{\xi\not = \eta} \ms V_\xi) \;\approx\; 
\max_{\sigma'\in \cup_{\xi\not = \eta} \ms V_\xi} \Cap (\sigma,
\sigma') \;\approx\; \max_{\xi\in \ms M \setminus\{ \eta\}}
\Cap (\eta, \xi)\;.
\end{equation*}
Applying  \eqref{mf03} once more, we conclude that
\begin{equation}
\label{cons2}
\Cap (\sigma, \cup_{\xi\not = \eta} \ms V_\xi) \;\approx\;
\Cap (\eta, \ms M \setminus\{ \eta\})\;.  
\end{equation}
In particular, by Assertion \ref{as52},
\begin{equation}
\label{cons1}
\lim_{\beta\to\infty} \frac{\Cap (\sigma, \cup_{\xi\not = \eta} \ms
  V_\xi) }{\Cap (\sigma, \eta)}\;=\; 0\;.
\end{equation}

Denote by $\sigma^{\ms A}(t)$, $\ms A\subset\Omega$, the trace of
$\sigma(t)$ on $\ms A$.  By \cite[Proposition 6.1]{bl2}, $\sigma^{\ms
  A}(t)$ is a continuous-time Markov chain. Moreover, for $\ms
B\subset\ms A$, $\sigma^{\ms B}(t)$ is the trace of $\sigma^{\ms
  A}(t)$ on $\ms B$. When $\ms A = \ms M$, we represent $\sigma^{\ms
  A}(t)$ by $\eta(t)$.  Denote $R^{\ms A}_\beta(\sigma, \sigma')$,
$\sigma\not = \sigma'\in \ms A$, the jump rates of the Markov chain
$\sigma^{\ms A}(t)$.

Recall the definition of the map $\pi:\ms M \to \{-1,0,1\}$,
introduced just before the statement of Theorem \ref{mt1b}.
Denote by $\psi = \psi_{\ms V} : \ms V \to \{-1,0, 1\}$
the projections defined by $\psi(\sigma)=\pi(\eta)$ if
$\sigma\in \ms V_\eta$:
\begin{equation*}
\psi(\sigma) \;=\; \sum_{\eta \in \ms M} \pi(\eta) \, \mb 1\{\sigma \in \ms
V_\eta\} \;.
\end{equation*}
Recall also the definition of the time-scale $\theta_\beta$ introduced
in \eqref{203}.

\begin{proposition}
\label{c-trb}
As $\beta\uparrow\infty$, the speeded-up, hidden Markov chain
$\psi(\sigma^{\ms V}(\theta_\beta t))$ converges to the
continuous-time Markov chain $X(t)$ introduced in Theorem \ref{mt1b}.
\end{proposition}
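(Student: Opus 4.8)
The plan is to obtain Proposition \ref{c-trb} from a general reduction theorem for trace processes in the soft topology (the one in \cite{bl4}, or Theorem 5.1 of \cite{l-soft} applied with $\ms V$ playing the role of the full space, $\{\ms V_\eta : \eta\in\ms M\}$ that of the partition, and $\ms M$ that of the core; recall that $\sigma^{\ms V}$ is reversible and, by \cite[Lemma 4.3]{bl4}, satisfies Assumption \ref{mhyp}). Such a theorem reduces the claim to two inputs. The first is purely structural: the blocks $\ms V_\eta$ collapse, on the scale $\theta_\beta$, to their bottom $\eta$, and the capacities between blocks coincide with those between the bottoms. These are already available: $\mu_\beta(\sigma)\prec\mu_\beta(\eta)$ and $\Cap(\eta,\ms M\setminus\{\eta\})\prec\Cap(\sigma,\eta)$ for $\sigma\in\ms V_\eta\setminus\{\eta\}$ (Assertion \ref{as52}), $\Cap(\sigma,\sigma')\approx\Cap(\eta,\xi)$ for $\sigma\in\ms V_\eta$, $\sigma'\in\ms V_\xi$ (Assertion \ref{as51}), $\mu_\beta(\eta)/\Cap(\sigma,\eta)\prec\theta_\beta$ from \eqref{cond3}, the inclusion $\ms V_\eta\subset\ms B_\eta$ recorded in Remark \ref{rm3}, and the elementary facts $\mu_\beta(\ms V_\eta\setminus\{\eta\})\prec\mu_\beta(\eta)$ and $\mu_\beta(\sigma)\to 0$ for every $\sigma\in\ms V\setminus\ms M$, both of which follow from $\bb H(\sigma)>\bb H(\eta)\ge\bb H({\bf +1})$ together with $\mu_\beta({\bf +1})\to1$. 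Together they show that the time spent by $\sigma^{\ms V}(\theta_\beta t)$ outside $\ms M$ is negligible and that $\psi(\sigma^{\ms V}(\theta_\beta t))$ and $\pi(\sigma^{\ms M}(\theta_\beta t))$ share the same limit, so the only remaining input — and the real content of the argument — is that the speeded-up trace on $\ms M$ converges to $X$.

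Since $\ms M$ has three points and the trace leaves any state instantaneously, it suffices to compute the limits of the rates $\theta_\beta R^{\ms M}_\beta(\eta,\xi)$, $\eta\ne\xi\in\ms M$, and check they equal $r$; convergence of the finite-state chain then follows from convergence of its jump rates. Writing $\lambda^{\ms M}_\beta$, $p^{\ms M}_\beta$ for the holding rates and jump probabilities of the trace, using that tracing preserves capacities and that $\bb P^{\ms M}_\eta[H_{\ms M\setminus\{\eta\}}<H^+_\eta]=1$, one gets $\lambda^{\ms M}_\beta(\eta)=\Cap(\eta,\ms M\setminus\{\eta\})/\mu^{\ms M}_\beta(\eta)$ with $\mu^{\ms M}_\beta(\eta)=\mu_\beta(\eta)/\mu_\beta(\ms M)$. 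Because $\bf +1$ is the unique ground state, $\mu_\beta({\bf +1})\to1$, hence $\mu_\beta(\ms M)\to1$ and $\mu_\beta({\bf 0}),\mu_\beta({\bf -1})\to0$; consequently
\[
\theta_\beta\,\lambda^{\ms M}_\beta({\bf -1})\;=\;\mu_\beta(\ms M)\;\longrightarrow\;1 .
\]
For the holding rate at $\bf 0$ I would combine Proposition \ref{mt3} (which gives $\Cap({\bf -1},\{{\bf 0},{\bf +1}\})$ and $\Cap({\bf 0},\{{\bf -1},{\bf +1}\})$ with the \emph{same} constant $\tfrac{4(2n_0+1)}{3}|\Lambda_L|$, up to $\mu_\beta$ of a saddle in $\mf R^l$, resp.\ $\mf R^l_0$), the identifications $\bb H(\eta)=\bb H({\bf -1},\{{\bf 0},{\bf +1}\})$ for $\eta\in\mf R^l$ and $\bb H(\xi)=\bb H({\bf 0},\{{\bf -1},{\bf +1}\})$ for $\xi\in\mf R^l_0$ (coming from \eqref{mf04} and Proposition \ref{mt3}), and the identity \eqref{200}, to obtain $\Cap({\bf 0},\{{\bf -1},{\bf +1}\})/\Cap({\bf -1},\{{\bf 0},{\bf +1}\})=(1+o(1))\,\mu_\beta({\bf 0})/\mu_\beta({\bf -1})$, whence $\theta_\beta\lambda^{\ms M}_\beta({\bf 0})=(1+o(1))\mu_\beta(\ms M)\to1$; feeding Assertion \ref{500} into the same computation gives $\theta_\beta\lambda^{\ms M}_\beta({\bf +1})=(1+o(1))\mu_\beta(\ms M)\,\mu_\beta({\bf 0})/\mu_\beta({\bf +1})\to0$, so $\bf +1$ is absorbing in the limit. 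Finally $p^{\ms M}_\beta({\bf -1},{\bf 0})=\bb P_{\bf -1}[H_{\bf 0}<H_{\bf +1}]\to1$ by Proposition \ref{0b1} and $p^{\ms M}_\beta({\bf 0},{\bf +1})=\bb P_{\bf 0}[H_{\bf +1}<H_{\bf -1}]\to1$ by Assertion \ref{as65}; multiplying holding rates by jump probabilities yields $\theta_\beta R^{\ms M}_\beta({\bf -1},{\bf 0})\to1$, $\theta_\beta R^{\ms M}_\beta({\bf 0},{\bf +1})\to1$ and all other rates $\to0$, i.e.\ the limit chain is $X$.

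The delicate step is the cancellation behind $\theta_\beta\lambda^{\ms M}_\beta({\bf 0})\to1$: the divergent factor $\mu_\beta({\bf -1})/\mu_\beta({\bf 0})$ must be exactly compensated by the vanishing capacity ratio $\Cap({\bf 0},\{{\bf -1},{\bf +1}\})/\Cap({\bf -1},\{{\bf 0},{\bf +1}\})$, and it is precisely Proposition \ref{mt3} together with \eqref{200} that provides this. The other, more routine, point is checking that the hypotheses of the soft-topology reduction theorem are met by $\sigma^{\ms V}$ — essentially that the excursions of $\sigma^{\ms V}(\theta_\beta t)$ inside $\ms V_\eta\setminus\{\eta\}$ are short on the scale $\theta_\beta$ — which is where \eqref{cond3}, Remark \ref{rm3} and the bound $\mu_\beta(\ms V_\eta\setminus\{\eta\})\prec\mu_\beta(\eta)$ are used.
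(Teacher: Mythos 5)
Your proposal is correct in substance, and its core — the computation of the limiting rates of the speeded-up trace on $\ms M$ via $\lambda^{\ms M}_\beta(\eta)\,p^{\ms M}_\beta(\eta,\xi)$, with $\lambda^{\ms M}_\beta(\eta)$ expressed through $\Cap(\eta,\ms M\setminus\{\eta\})/\mu_\beta(\eta)$, the cancellation $\theta_\beta\Cap({\bf 0},\{{\bf -1},{\bf +1}\})/\mu_\beta({\bf 0})\to 1$ coming from Proposition \ref{mt3} and \eqref{200}, the absorption at $\bf +1$ from Assertion \ref{500}, and the jump probabilities from Proposition \ref{0b1} and Assertion \ref{as65} — is exactly the paper's Lemma \ref{c-tr}. (The harmless factor $\mu_\beta(\ms M)\to 1$ in your holding rates is only a normalization convention for the trace capacities.) Where you genuinely diverge is in the passage from the three-point trace to the hidden chain $\psi(\sigma^{\ms V}(\theta_\beta t))$: you delegate the collapse of each well $\ms V_\eta$ onto its bottom $\eta$ to a general reduction theorem of the Beltr\'an–Landim type, checking its hypotheses through Assertions \ref{as52} and \ref{as51}, \eqref{cond3}, the inclusion $\ms V_\eta\subset\ms B_\eta$ and $\mu_\beta(\ms V_\eta\setminus\{\eta\})\prec\mu_\beta(\eta)$. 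The paper instead proves this step by hand (Lemma \ref{coupling}): it compares the jump times $T_j$ of the hidden chain with the jump times $\tau_j$ of the trace on $\ms M$ via Tchebycheff bounds on the excursion times in $\ms V_\eta\setminus\{\eta\}$, and shows directly that $\sigma(T_1)\in\ms V_{\bf 0}$, $\sigma(T_2)\in\ms V_{\bf +1}$ with high probability. Your route is shorter and more modular, at the price of having to verify carefully the exact hypotheses of the black-box theorem (in particular that the inter-well rates computed from the sets $\ms V_\eta$ coincide asymptotically with those computed from the points $\eta$, which is where Assertion \ref{as51} together with \eqref{mf03} and \eqref{cons2} enters, and that upon exiting $\ms V_{\bf -1}$ the chain enters $\ms V_{\bf 0}$ rather than $\ms V_{\bf +1}$); the paper's explicit coupling makes the proof self-contained and also yields the quantitative statement that $T_3-T_2\succ\theta_\beta$, which it reuses when deducing Theorem \ref{mt1b}.
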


We first prove Proposition \ref{c-trb} in the case where the wells
$\ms V_\eta$ are singletons: $\ms V_\eta = \{\eta\}$. In this case,
$\psi$ is a bijection, and $\psi(\eta(t))$ is a Markov chain on
$\{-1,0,1\}$.

\begin{lemma}
\label{c-tr}
As $\beta\uparrow\infty$, the speeded-up chain $\eta(\theta_\beta t)$
converges to the continuous-time Markov chain on $\ms M$ in which $\bf
+1$ is an absorbing state, and whose jump rates $\mb r(\eta, \xi)$, are
given by
\begin{equation*}
\mb r({\bf -1}, {\bf 0}) \;=\; \mb  r({\bf 0},{\bf +1})  \;=\; 1 \;,
\quad \mb r({\bf -1},{\bf +1}) \;=\; \mb r({\bf 0},{\bf -1})\;=\;
0\;. 
\end{equation*}
\end{lemma}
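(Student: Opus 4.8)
The plan is to realize $\eta(t)$ as the trace of $\sigma(t)$ on the three‑point set $\ms M$ --- a continuous‑time Markov chain by \cite[Proposition 6.1]{bl2} --- and to deduce the convergence of $\eta(\theta_\beta t)$ from the entrywise convergence of the rescaled generators, which for a Markov chain on a fixed finite state space started at a fixed point suffices. By the computation that led to \eqref{cl2}, carried out with $\ms M$ in place of $\ms S$, the holding rate of $\eta(t)$ at $\eta$ is $\lambda^{\ms M}_\beta(\eta)=\Cap(\eta,\ms M\setminus\{\eta\})/\mu_\beta(\eta)$, and the jump probability from $\eta$ to $\xi$ is $\bb P_\eta[H_\xi=H_{\ms M\setminus\{\eta\}}]$, which for $|\ms M|=3$ is simply $\bb P_\eta[H_\xi<H_\zeta]$ with $\zeta$ the remaining state; hence
\begin{equation*}
\theta_\beta R^{\ms M}_\beta(\eta,\xi) \;=\; \Big(\theta_\beta\,\frac{\Cap(\eta,\ms M\setminus\{\eta\})}{\mu_\beta(\eta)}\Big)\,\bb P_\eta[H_\xi<H_\zeta]\;,
\end{equation*}
and the proof reduces to computing, for each $\eta\in\ms M$, the limit of the prefactor and of the hitting probability. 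For $\eta={\bf -1}$ the prefactor equals $1$ identically by the definition \eqref{203} of $\theta_\beta$, so $\theta_\beta R^{\ms M}_\beta({\bf -1},\xi)=\bb P_{\bf -1}[H_\xi<H_{\ms M\setminus\{{\bf -1},\xi\}}]$, and Proposition \ref{0b1} gives at once $\mb r({\bf -1},{\bf 0})=1$, $\mb r({\bf -1},{\bf +1})=0$.

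The case $\eta={\bf 0}$ is the crux: one must check that the escape time scale from ${\bf 0}$ coincides with the one from ${\bf -1}$, i.e. $\theta_\beta\,\Cap({\bf 0},\{{\bf -1},{\bf +1}\})/\mu_\beta({\bf 0})\to1$. Fixing $\eta'\in\mf R^l$ and $\xi'\in\mf R^l_0$, this prefactor splits as
\begin{equation*}
\frac{\mu_\beta({\bf -1})/\mu_\beta(\eta')}{\mu_\beta({\bf 0})/\mu_\beta(\xi')}\;\cdot\;\frac{\Cap({\bf 0},\{{\bf -1},{\bf +1}\})/\mu_\beta(\xi')}{\Cap({\bf -1},\{{\bf 0},{\bf +1}\})/\mu_\beta(\eta')}\;;
\end{equation*}
Proposition \ref{mt3} shows the second factor tends to $1$, while the first equals $\exp\{\beta[(\bb H(\eta')-\bb H({\bf -1}))-(\bb H(\xi')-\bb H({\bf 0}))]\}=1$, since the two droplet constructions are isomorphic --- replace each $-1$ by $0$ and each $0$ by $+1$ --- so their Hamiltonian increments coincide, giving $\bb H(\eta')-\bb H({\bf -1})=\bb H(\xi')-\bb H({\bf 0})$ (an elementary count yields the common value $4(n_0+1)-h\,(n_0(n_0+1)+1)$). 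Consequently $\theta_\beta R^{\ms M}_\beta({\bf 0},\xi)=(1+o(1))\,\bb P_{\bf 0}[H_\xi<H_{\ms M\setminus\{{\bf 0},\xi\}}]$, and Assertion \ref{as65} gives $\mb r({\bf 0},{\bf +1})=1$, $\mb r({\bf 0},{\bf -1})=0$.

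Finally, for $\eta={\bf +1}$ it is enough to show $\theta_\beta\lambda^{\ms M}_\beta({\bf +1})\to0$, which forces $\theta_\beta R^{\ms M}_\beta({\bf +1},\xi)\to0$ for both $\xi$ and makes ${\bf +1}$ absorbing in the limit. Writing
\begin{equation*}
\theta_\beta\,\lambda^{\ms M}_\beta({\bf +1}) \;=\; \frac{\Cap({\bf +1},\{{\bf -1},{\bf 0}\})}{\Cap({\bf 0},\{{\bf -1},{\bf +1}\})}\;\cdot\;\frac{\mu_\beta({\bf 0})}{\mu_\beta({\bf +1})}\;\cdot\;\Big(\theta_\beta\,\frac{\Cap({\bf 0},\{{\bf -1},{\bf +1}\})}{\mu_\beta({\bf 0})}\Big)\;,
\end{equation*}
the first factor tends to $1$ by Assertion \ref{500}, the third tends to $1$ by the previous paragraph, and $\mu_\beta({\bf 0})/\mu_\beta({\bf +1})=e^{-\beta h|\Lambda_L|}\to0$ because $\bb H({\bf 0})-\bb H({\bf +1})=h|\Lambda_L|$. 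Collecting the three cases identifies the limit rates with the $\mb r(\cdot,\cdot)$ of the statement, and the convergence of the chains follows. The hard part is the middle paragraph --- the matching of the two escape time scales --- but this is precisely what Proposition \ref{mt3} was designed to deliver; everything else is bookkeeping built on Propositions \ref{0b1} and \ref{mt3}, Assertions \ref{as65} and \ref{500}, the elementary energy count, and the standard formulas for the trace process.
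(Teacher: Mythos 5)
Your proposal is correct and follows essentially the same route as the paper: the same decomposition $\theta_\beta R^{\ms M}_\beta(\eta,\xi)=\big(\theta_\beta\,\Cap(\eta,\ms M\setminus\{\eta\})/\mu_\beta(\eta)\big)\,\bb P_\eta[H_\xi<H_\zeta]$ for the trace-chain rates, with the same inputs (Propositions \ref{0b1} and \ref{mt3}, Assertions \ref{as65} and \ref{500}) to evaluate the three prefactors and hitting probabilities. The only cosmetic difference is that you verify the matching of the two escape scales by an explicit energy count, where the paper invokes the identity \eqref{200} it had already recorded as a consequence of Proposition \ref{mt3}.
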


\begin{proof}
Denote by $r_\beta(\eta,\xi)$ the jump rates of the chain
$\eta(\theta_\beta t)$. It is enough to prove that
\begin{equation}
\label{lim}
\lim_{\beta\to\infty} r_\beta(\eta,\xi) \;=\; \mb r(\eta,\xi)
\end{equation}
for all $\eta\not =\xi\in\ms M$.

By \cite[Proposition 6.1]{bl2}, the jump rates $r_\beta (\eta,\xi)$,
$\eta\not =\xi\in \ms M$, of the Markov chain $\eta_\beta(t)$ are
given by
\begin{equation*}
r_\beta(\eta,\xi) \;=\; \theta_\beta \, \lambda(\eta) \, 
\bb P_{\eta} [H_\xi = H^+_{\ms M}]\;. 
\end{equation*}
Dividing and multiplying the previous expression by $\bb P_{\eta}
[H_{\ms M \setminus\{\eta\}} < H^+_{\eta}]$, in view of \cite[Lemma
6.6]{bl2} and of \eqref{mf01}, we obtain that
\begin{equation*}
r_\beta(\eta,\xi) \;=\; \frac{\theta_\beta}{\mu_\beta(\eta)} 
\, \Cap(\eta, \ms M\setminus\{\eta\})\; 
\bb P_{\eta} [H_\xi < H_{\ms M \setminus \{\eta,\xi\}}]\;. 
\end{equation*}

For $\eta= {\bf +1}$ and $\xi = {\bf -1}$, ${\bf 0}$, by Assertion
\ref{500} and by Proposition \ref{mt3},
\begin{equation*}
\lim_{\beta\to\infty} r_\beta({\bf +1},\xi) \;\le\;
\lim_{\beta\to\infty} \frac{\theta_\beta}{\mu_\beta({\bf +1})} 
\, \Cap({\bf +1}, \ms M\setminus\{{\bf +1}\}) 
\; = \; \lim_{\beta\to\infty} \frac
{\mu_\beta({\bf 0})}{\mu_\beta({\bf +1})} 
\;=\; 0\;.
\end{equation*}
On the other hand, by Proposition \ref{mt3}, 
\begin{equation*}
\lim_{\beta\to\infty} \frac{\theta_\beta}{\mu_\beta({\bf 0})} 
\, \Cap({\bf 0}, \ms M\setminus\{{\bf 0}\}) \;=\; 1\;,
\end{equation*}
while $\theta_\beta \Cap({\bf -1}, \ms M\setminus\{{\bf -1}\})/
\mu_\beta({\bf 0}) =1$.  Furthermore, by Proposition \ref{0b1} and
Assertion \ref{as65},
\begin{equation*}
\lim_{\beta\to\infty} \bb P_{\bf -1} [H_{\bf +1} < H_{\bf 0}]\;=\;
\lim_{\beta\to\infty} \bb P_{\bf 0} [H_{\bf -1} < H_{\bf +1}]\;=\;
0\;. 
\end{equation*}
This yields \eqref{lim} and completes the proof of the lemma.
\end{proof}

Denote by $\bb P^{\ms V}_\sigma$, $\sigma\in \ms V$, the probability
measure on the path space $D(\bb R_+, \ms V)$ induced by the Markov
chain $\sigma^{\ms V}(t)$ starting from $\sigma$. Expectation with
respect to $\bb P^{\ms V}_\sigma$ is represented by $\bb E^{\ms
  V}_\sigma$. Clearly, for any disjoint subsets $\ms A$, $\ms B$ of
$\ms V$,
\begin{equation}
\label{eq1}
\bb P^{\ms V}_\sigma [H_{\ms A} < H_{\ms B}]  \;=\; \bb P_\sigma
[H_{\ms A} < H_{\ms B}]\;.  
\end{equation}

The hitting time of a subset $\ms A$ of $\ms V$ by the trace chain
$\sigma^{\ms V}$ can be represented in terms of the original chain 
$\sigma(t)$. Under $\bb P_\sigma$,
\begin{equation}
\label{eq2}
H^{\ms V}_{\ms A} \;=\; \inf\{t>0 :  \sigma^{\ms V}(t)\in\ms A\}
\;=\; \int_0^{H_{\ms A}} \mb 1\{\sigma(t)\in \ms V\}\, dt \;.
\end{equation}

Let
\begin{equation*}
\breve {\ms V}(\eta) \;=\; \breve {\ms V}_\eta \;=\; 
\bigcup_{\zeta\not = \eta} \ms V_\zeta\;.
\end{equation*}
Denote by $\{T_j : j \ge 0\}$ the jump times of the hidden chain
$\psi(\sigma^{\ms V}(t))$:
\begin{equation*}
T_0 \;=\; 0\;, \quad T_{j+1} \;=\; \inf\{ t \ge T_j : 
\sigma^{\ms V}(t) \in \breve {\ms V}(\sigma^{\ms V}(T_j))\big\}\;, 
\quad j\ge 0\;,
\end{equation*}
Similarly, denote by $\{\tau_j : j \ge 0\}$ the successive jump times
of the chain $\eta(t)$.

\begin{lemma}
\label{coupling}
Fix $\sigma\in \ms V_{\bf -1}$.  There exists a sequence
$\epsilon_\beta\to 0$ such that for $j=1$, $2$
\begin{equation}
\label{as1}
\lim_{\beta\to\infty} \bb P^{\ms V}_{\sigma} \big[ \, | \tau_j - T_j | \ge
\theta_\beta \epsilon_\beta \big] \;=\;0\;, \quad
\lim_{\beta\to\infty} \bb P^{\ms V}_{\sigma} \big[ \, T_3-T_2  \le
\theta_\beta \epsilon^{-1}_\beta \big] \;=\;0\;.
\end{equation}
Moreover,
\begin{equation}
\label{as2}
\lim_{\beta\to\infty} \bb P^{\ms V}_{\sigma} \big[ \sigma(T_1) \not \in
\ms V_{\bf 0} \big] \;=\;0\;, \quad
\lim_{\beta\to\infty} \bb P^{\ms V}_{\sigma} \big[ \sigma(T_2) \not \in
\ms V_{\bf +1} \big] \;=\;0\;.
\end{equation}
\end{lemma}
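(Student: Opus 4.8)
The plan is to couple the jump chain of the hidden process $\psi(\sigma^{\ms V}(t))$ with the one of the trace chain $\eta(t)=\sigma^{\ms M}(t)$, exploiting that the time $\sigma^{\ms V}$ spends in $\ms V_\eta\setminus\{\eta\}$ during a sojourn in the well $\ms V_\eta$ is negligible with respect to $\theta_\beta$. Since by \eqref{cons1} the chain started at $\sigma\in\ms V_{\bf -1}$ reaches $\bf -1$ before $\breve{\ms V}_{\bf -1}$ with probability tending to $1$, and (by the Green function estimate used below) the time it spends in $\ms V_{\bf -1}\setminus\{{\bf -1}\}$ before $H_{\bf -1}$ is $\prec\theta_\beta$, the strong Markov property at $H_{\bf -1}$ reduces everything to the case $\sigma={\bf -1}$, which we assume henceforth.

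\emph{The identities \eqref{as2}.} By \eqref{eq1} and the strong Markov property at $T_1$, \eqref{as2} follows once we prove that $\bb P_{\bf -1}[H_{\ms V_{\bf +1}}<H_{\ms V_{\bf 0}}]\to0$ and $\bb P_{\bf 0}[H_{\ms V_{\bf -1}}<H_{\ms V_{\bf +1}}]\to0$ as $\beta\to\infty$. Each of these is obtained by running, at the level of the neighborhoods, the renewal argument of Proposition \ref{0b1} (resp.\ of Assertion \ref{as65}). For the first one, a comparison of the number of spins different from $\bf -1$ shows that no configuration in $\ms V_{\bf 0}\cup\ms V_{\bf +1}$ has exactly $n_0(n_0+1)$ or $n_0(n_0+1)+1$ such spins, so that the chain started at $\bf -1$ visits $\mf B^+$ before $\ms V_{\bf 0}\cup\ms V_{\bf +1}$; Lemma \ref{as61} then reduces the problem to a sum over $\sigma\in\mf R^a$, and \eqref{411} together with Lemma \ref{as11b} (the dynamics issued from $\mf R^a$ reaches $\ms V_{\bf -1}$ or $\ms V_{\bf 0}$, never $\ms V_{\bf +1}$, with the limiting probabilities of Lemma \ref{as11b}) gives
\begin{equation*}
\bb P_{\bf -1}[H_{\ms V_{\bf +1}}<H_{\ms V_{\bf 0}}]\;=\;c\,\big(\bb P_{\bf -1}[H_{\ms V_{\bf +1}}<H_{\ms V_{\bf 0}}]+o(1)\big)\;+\;o(1)\;,
\end{equation*}
with $c=\big(|\mf R^s|+\tfrac12|\mf R^{lc}|+\tfrac13|\mf R^{li}|\big)/|\mf R^a|<1$; hence the limit is $0$. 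The second identity is obtained in the same way, using Assertions \ref{as62} and \ref{as63} in place of Lemmas \ref{as61} and \ref{as11b}.

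\emph{The estimates \eqref{as1}.} By \eqref{eq2}, under $\bb P_{\bf -1}$ we have $T_1=\int_0^{H_{\breve{\ms V}_{\bf -1}}}\mb 1\{\sigma_t\in\ms V_{\bf -1}\}\,dt$ and $\tau_1=\int_0^{H_{\{{\bf 0},{\bf +1}\}}}\mb 1\{\sigma_t={\bf -1}\}\,dt$, whence $|\tau_1-T_1|\le B_1+C_1$ with $B_1=\int_0^{H_{\breve{\ms V}_{\bf -1}}}\mb 1\{\sigma_t\in\ms V_{\bf -1}\setminus\{{\bf -1}\}\}\,dt$ and $C_1=\int_{H_{\breve{\ms V}_{\bf -1}}}^{H_{\{{\bf 0},{\bf +1}\}}}\mb 1\{\sigma_t={\bf -1}\}\,dt$. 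From the identity $\bb E_y\big[\int_0^{H_B}\mb 1\{\sigma_t=x\}\,dt\big]=\bb P_y[H_x<H_B]\,\mu_\beta(x)/\Cap(x,B)$, from \eqref{cons2}, which gives $\Cap(x,\breve{\ms V}_{\bf -1})\approx\Cap({\bf -1},\ms M\setminus\{{\bf -1}\})=\mu_\beta({\bf -1})/\theta_\beta$, and from Assertion \ref{as52}, which gives $\mu_\beta(x)\prec\mu_\beta({\bf -1})$ for $x\in\ms V_{\bf -1}\setminus\{{\bf -1}\}$, we get
\begin{equation*}
\bb E_{\bf -1}[B_1]\;\le\;\sum_{x\in\ms V_{\bf -1}\setminus\{{\bf -1}\}}\frac{\mu_\beta(x)}{\Cap(x,\breve{\ms V}_{\bf -1})}\;\approx\;\frac{\theta_\beta\,\mu_\beta(\ms V_{\bf -1}\setminus\{{\bf -1}\})}{\mu_\beta({\bf -1})}\;\prec\;\theta_\beta\;,
\end{equation*}
so $B_1\prec\theta_\beta$ in probability; and on $\{\sigma(T_1)\in\ms V_{\bf 0}\}$, whose probability tends to $1$ by the previous step, the chain reaches $\bf 0$ before leaving $\ms V_{\bf 0}$ (by \eqref{cons1}), hence never visits $\bf -1$ in $[H_{\breve{\ms V}_{\bf -1}},H_{\{{\bf 0},{\bf +1}\}}]$ and $C_1=0$. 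This proves the first assertion in \eqref{as1} for $j=1$, for any $\epsilon_\beta\to0$ with $\epsilon_\beta\gg\bb E_{\bf -1}[B_1]/\theta_\beta$. The case $j=2$ is handled identically after restarting from $\sigma(T_1)\in\ms V_{\bf 0}$, using now Proposition \ref{mt3} (which yields $\mu_\beta({\bf 0})/\Cap({\bf 0},\ms M\setminus\{{\bf 0}\})\sim\theta_\beta$), the second identity above, and \eqref{cons1} for the well $\ms V_{\bf 0}$; it gives $|\tau_2-T_2|\le|\tau_1-T_1|+|(\tau_2-\tau_1)-(T_2-T_1)|\prec\theta_\beta$.

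\emph{The bound on $T_3-T_2$.} On $\{\sigma(T_2)\in\ms V_{\bf +1}\}$, by the strong Markov property and \eqref{cons1} the chain started from $\sigma(T_2)$ reaches $\bf +1$ before $\breve{\ms V}_{\bf +1}$ with probability tending to $1$; thus $T_3-T_2$ dominates the total time spent at $\bf +1$ before $H_{\breve{\ms V}_{\bf +1}}$, which is an exponential variable of rate $\Cap({\bf +1},\breve{\ms V}_{\bf +1})/\mu_\beta({\bf +1})$. By \eqref{cons2}, Assertion \ref{500} and Proposition \ref{mt3}, $\mu_\beta({\bf +1})/\Cap({\bf +1},\breve{\ms V}_{\bf +1})\approx\big(\mu_\beta({\bf +1})/\mu_\beta({\bf 0})\big)\,\theta_\beta=e^{\beta h|\Lambda_L|}\,\theta_\beta\gg\theta_\beta$, so the second assertion in \eqref{as1} holds for any $\epsilon_\beta\to0$ with $\epsilon_\beta^{-1}\theta_\beta\,\Cap({\bf +1},\breve{\ms V}_{\bf +1})/\mu_\beta({\bf +1})\to0$. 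The main obstacle is the first step: transporting the delicate renewal identities of Sections \ref{sec3}--\ref{sec4} from the configurations $\bf -1$, $\bf 0$, $\bf +1$ to their neighborhoods, which requires the concentration estimates of Assertions \ref{as52} and \ref{as51} and the particle-number comparison to ensure that the chain cannot enter $\ms V_{\bf 0}$ or $\ms V_{\bf +1}$ except along the paths already analyzed.
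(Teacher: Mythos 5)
Your treatment of \eqref{as1} and of the lower bound on $T_3-T_2$ is sound and essentially the paper's own argument: the same decomposition $|\tau_1-T_1|\le B_1+C_1$, the same Green-function/capacity bound $\bb E_{\bf -1}[B_1]\preceq \theta_\beta\,\mu_\beta(\ms V_{\bf -1}\setminus\{{\bf -1}\})/\mu_\beta({\bf -1})\prec\theta_\beta$ via \eqref{cons2} and Assertion \ref{as52}, and the same exponential-sojourn estimate at $\bf +1$ (the paper phrases it as a geometric number of exponential excursions before reaching a set that must be crossed to exit $\ms V_{\bf +1}$, which amounts to the same computation).

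The weak point is \eqref{as2}. You propose to re-run the renewal analysis of Sections \ref{sec3}--\ref{sec4} with $\ms V_{\bf 0}$, $\ms V_{\bf +1}$ in place of $\bf 0$, $\bf +1$, and the step on which everything rests --- that the chain started from $\bf -1$ reaches $\mf B^+$ before $\ms V_{\bf 0}\cup\ms V_{\bf +1}$, and that from $\mf R^a$ it cannot slip into $\ms V_{\bf +1}$ except along the paths already analyzed, so that Lemmas \ref{as61} and \ref{as11b} transfer to the neighborhood level --- is justified only by ``a comparison of the number of spins different from $\bf -1$.'' No such comparison is available: the neighborhoods $\ms V_\eta$ are constrained only by the energy conditions \eqref{cond}, not by any particle count, so the disjointness you need must be extracted from \eqref{cond} (for instance, every $\zeta\in\ms V_{\bf +1}$ satisfies $\bb H(\zeta)<\bb H({\bf +1})+\bb H({\bf -1},\{{\bf 0},{\bf +1}\})-\bb H({\bf -1})<\bb H({\bf 0})$, an energy level no droplet of $0$-spins in a sea of $-1$-spins can attain). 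You flag this yourself as ``the main obstacle'' but do not close it, so as written \eqref{as2} is not proved. The paper avoids the entire detour: setting $H=H(\breve{\ms V}_{\bf -1})$, it splits $\bb P_\sigma[\sigma(H)\in\ms V_{\bf +1}]$ over the event $\{H_{\bf +1}<H_{\bf 0}\}$, whose probability vanishes by Proposition \ref{0b1}, and over $\{H_{\bf 0}<H_{\bf +1}\}$, on which, since $H<\min\{H_{\bf 0},H_{\bf +1}\}$, the strong Markov property at $H$ bounds the probability by $\max_{\sigma'\in\ms V_{\bf +1}}\bb P_{\sigma'}[H_{\bf 0}<H_{\bf +1}]$, which vanishes because $\ms V_{\bf +1}$ lies in the basin of attraction of $\bf +1$ (cf.\ \eqref{basin} and Assertion \ref{as52}). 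Either adopt this two-line conditioning or supply the missing energy estimate; the renewal route, once repaired, works but buys nothing.
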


\begin{proof}
Fix a configuration $\sigma\in \ms V_{\bf -1}$. By \eqref{eq1}, \eqref{mf02} and
\eqref{cons1}, 
\begin{equation}
\label{claim1}
\limsup_{\beta\to\infty} \bb P^{\ms V}_{\sigma} \big[ H_{\breve {\ms  V}(\bf -1)}
< H_{\bf -1} \big] \;\le\; \lim_{\beta\to\infty}
\frac{\Cap (\sigma, \breve {\ms  V}(\bf -1))}{\Cap (\sigma ,  {\bf -1})}
\;=\;0\;.
\end{equation}

On the other hand, under $\bb P^{\ms V}_{\sigma}$,
\begin{equation*}
\tau_1 \;=\; \int_{H_{\bf -1}}^{H_{{\bf 0}, {\bf +1}}} \mb 1 \{\sigma(s) =
{\bf -1}\} \, ds\;.
\end{equation*}
Hence, under $\bb P^{\ms V}_{\sigma}$ and on the event $\{H_{\bf -1} <
H_{\breve {\ms V}(\bf -1)} \}$ we have that
\begin{align*}
T_1 \; & =\; H_{\bf -1} \;+\; \int_{H_{\bf -1}}^{H(\breve {\ms V}_{\bf
    -1})} \big\{ \mb 1 \{\sigma(s) =
{\bf -1}\} + \mb 1 \{\sigma(s) \not = {\bf -1}\} \big\}\, ds \\
\; & =\; \tau_1 \;+\; H_{\bf -1} \;+\; \int_{H_{\bf -1}}^{H(\breve {\ms V}_{\bf -1})} 
\mb 1 \{\sigma(s) \not = {\bf -1}\} \, ds
\;-\; \int_{H(\breve {\ms V}_{\bf -1})}^{H_{{\bf 0}, {\bf +1}}}
\mb 1 \{\sigma(s) = {\bf -1}\} \, ds\;.
\end{align*}
It remains to estimate the last three terms.

To bound the first term, by \eqref{eq1}, \eqref{mf02}, and \eqref{cons1},
\begin{equation*}
\limsup_{\beta\to\infty} \bb P^{\ms V}_\sigma [ H_{\breve {\ms V}_{\bf -1}} < H_{\bf -1} ] 
\;=\; \limsup_{\beta\to\infty} \bb P_\sigma [ H_{\breve {\ms V}_{\bf -1}} < H_{\bf -1} ] 
\;\le\; \lim_{\beta\to\infty} \frac{\Cap (\sigma ,
  \breve {\ms V}_{\bf -1})}{\Cap (\sigma, {\bf -1})} \;=\; 0\;.
\end{equation*}
Hence, to prove that $\bb P^{\ms V}_\sigma [ H_{\bf -1} > \theta_\beta
\epsilon_\beta]\to 0$, it is enough to show that
\begin{equation*}
\lim_{\beta\to\infty} \bb P^{\ms V}_\sigma [ H_{\breve {\ms V}_{\bf
    -1} \cup \{{\bf -1}\}} > \theta_\beta \epsilon_\beta]\;=\; 0\;.
\end{equation*}
By Tchebycheff inequality, by Lemma 6.9 and Proposition 6.10 in
\cite{bl2}, and by \eqref{eq1}, the previous probability is less than
or equal to
\begin{equation*}
\frac 1{\theta_\beta \epsilon_\beta} \,
\frac 1{\Cap(\sigma, \breve {\ms V}_{\bf -1} \cup \{{\bf -1}\})} \,
\sum_{\eta\in\ms V} \mu_\beta(\eta) \, \bb P_\eta [ H_\sigma < 
H_{\breve {\ms V}_{\bf -1}  \cup \{{\bf -1}\}} ] \;.
\end{equation*}
By definition of $\theta_\beta$, since the capacity is monotone, and
since we may restrict the sum to $\ms V_{\bf -1}$, the previous
expression is less than or equal to
\begin{equation*}
\frac 1{\epsilon_\beta} \, 
\frac{\Cap({\bf -1},  \{{\bf 0}, {\bf +1}\})}
{\Cap(\sigma, {\bf -1})} \, 
\frac{\mu_\beta(\ms V_{-1})}{\mu_\beta({\bf -1})} \;\cdot
\end{equation*}
By Assertion \ref{as52}, $\mu_\beta(\ms V_{-1})/\mu_\beta({\bf -1})$ is
bounded and $\Cap({\bf -1}, \{{\bf 0}, {\bf +1}\})/ \Cap(\sigma, {\bf
  -1})$ vanishes as $\beta\uparrow\infty$. Hence, the previous
expression converges to $0$ for every sequence $\epsilon_\beta$ which
does not decrease too fast.

We turn to the second term of the decomposition of $T_1$.  By the
strong Markov property and by \eqref{eq2}, we need to estimate,
\begin{align*}
& \bb P^{\ms V}_{\bf -1} \Big[  \int_{0}^{H(\breve {\ms V}_{\bf -1})} 
\mb 1 \{\sigma(s) \not = {\bf -1}\} \, ds > 
\theta_\beta \epsilon_\beta\Big] \\
&\quad \;=\;
\bb P_{\bf -1} \Big[  \int_{0}^{H(\breve {\ms V}_{\bf -1})} 
\mb 1 \{\sigma(s) \in \ms V \setminus \{{\bf -1}\} \} \, ds > 
\theta_\beta \epsilon_\beta\Big] \;.
\end{align*}
By Tchebycheff inequality and by \cite[Proposition 6.10]{bl2}, the
previous probability is less than or equal to
\begin{equation*}
\frac 1{\theta_\beta \epsilon_\beta} \,
\frac 1{\Cap({\bf -1}, \breve {\ms V}_{\bf -1})} \,
\sum_{\eta\in \ms V \setminus \{{\bf -1}\}} \mu_\beta(\eta) \, \bb P_\eta [ H_{\bf -1} < 
H_{\breve {\ms V}_{\bf -1}} ] \;.
\end{equation*}
By \eqref{cons2}, $\Cap({\bf -1}, \breve {\ms V}_{\bf -1}) \approx
\Cap({\bf -1}, \{{\bf 0}, {\bf +1}\})$. Hence, by definition of
$\theta_\beta$, and since the sum can be restricted to the set ${\ms
  V}_{\bf -1} \setminus \{{\bf -1}\}$, the previous expression is less
than or equal to
\begin{equation*}
\frac {C_0}{\epsilon_\beta} \, \frac 1{\mu_\beta({\bf -1})} \,
\mu_\beta({\ms V}_{\bf -1} \setminus \{{\bf -1}\}) 
\end{equation*}
for some finite constant $C_0$. By Assertion \ref{as52}, the ratio of
the measures vanishes as $\beta\uparrow\infty$. In particular, the
previous expression converges to $0$ as $\beta\uparrow\infty$ if
$\epsilon_\beta$ does not decrease too fast.

The third term in the decomposition of $T_1$ is absolutely bounded by
$H_{{\bf 0}, {\bf +1}} - H(\breve {\ms V}_\eta)$ and can be handled as
the first one. This proves the first assertion of \eqref{as1} for
$j=1$. 

In a similar way one proves that $T_2-T_1$ is close to
$\tau_2-\tau_1$. The first assertion of \eqref{as1} for $j=2$ follows
from this result and from the bound for $T_1-\tau_1$.  The details are
left to the reader.

We turn to the proof of the first assertion in \eqref{as2}. Since
$\breve {\ms V}_{{\bf -1}} = {\ms V}_{{\bf 0}} \cup {\ms V}_{{\bf
    +1}}$, 
\begin{equation*}
\bb P^{\ms V}_{\sigma} \big[ \sigma(T_1) \not \in
\ms V_{\bf 0} \big] \;=\; \bb P_{\sigma} \big[ \sigma(H) \in
\ms V_{\bf +1} \big]\;,
\end{equation*}
where $H = H(\breve {\ms V}_{{\bf -1}})$. We may rewrite the previous
probability as
\begin{equation*}
\bb P_{\sigma} \big[ \sigma(H) \in \ms V_{\bf +1} \,,\, H_{\bf 0} < H_{\bf +1} \big]
\;+\; 
\bb P_{\sigma} \big[ \sigma(H) \in \ms V_{\bf +1} \,,\, H_{\bf 0} >
H_{\bf +1} \big] \;.
\end{equation*}
Both expression vanishes as $\beta\uparrow\infty$. The second one is
bounded by $\bb P_{\sigma} [ H_{\bf +1} < H_{\bf 0}]$, which vanishes
by Proposition \ref{0b1}. Since $H < \min\{ H_{\bf 0} , H_{\bf +1}\}$, by
the strong Markov property, the first term is less than or equal to
\begin{equation*}
\max_{\sigma'\in \ms V_{\bf +1}}
\bb P_{\sigma'} \big[ H_{\bf 0} < H_{\bf +1} \big]\;.
\end{equation*}
This expression converges to $0$ as $\beta\uparrow\infty$ because $\ms
V_{\bf +1}$ is contained in the basin of attraction of ${\bf +1}$. The
proof of the second assertion in \eqref{as2} is similar and left to
the reader.

We finally examine the third assertion of \eqref{as1}. By the second
assertion of \eqref{as2}, it is enough to prove that 
\begin{equation*}
\lim_{\beta\to\infty} \bb P^{\ms V}_{\sigma} \big[ \, T_3-T_2  \le
\theta_\beta \epsilon^{-1}_\beta \,,\, \sigma(T_2) \in
\ms V_{\bf +1} \big] \;=\;0\;.
\end{equation*}
By the strong Markov property, this limit holds if
\begin{equation*}
\lim_{\beta\to\infty} \max_{\sigma'\in \ms V_{\bf +1}} 
\bb P^{\ms V}_{\sigma'} \big[ \, T_1  \le
\theta_\beta \epsilon^{-1}_\beta \big] \;=\;0\;.
\end{equation*}
Since $\ms V_{\bf +1}$ is contained in the basin of attraction of $\bf
+1$, it is enough to show that
\begin{equation*}
\lim_{\beta\to\infty} \max_{\sigma'\in \ms V_{\bf +1}} 
\bb P^{\ms V}_{\sigma'} \big[ \, T_1  \le
\theta_\beta \epsilon^{-1}_\beta \,,\, H_{\bf +1} 
< T_1 \big] \;=\;0\;.
\end{equation*}
On the event $\{H_{\bf +1} < T_1\}$, $\{T_1 \le \theta_\beta
\epsilon^{-1}_\beta \}\subset \{T_1 \circ \theta_{H_{\bf +1}} \le
\theta_\beta \epsilon^{-1}_\beta\}$, where $\{\theta_t : t\ge 0\}$
represents the semigroup of time translations. In particular, by the
strong Markov property, we just need to show that
\begin{equation*}
\lim_{\beta\to\infty} 
\bb P^{\ms V}_{\bf +1} \big[ \, T_1  \le
\theta_\beta \epsilon^{-1}_\beta \big] \;=\;0\;.
\end{equation*}

Let $\{\mf e_j : j\ge 1\}$ be the length of the sojourn times at $\bf
+1$. Hence, $\{\mf e_j : j\ge 1\}$ is a sequence of i.i.d. exponential
random variables with parameter $\lambda({\bf +1})$. Denote by $\ms A$
the set of configurations with at least $n_0(n_0+1)$ sites with spins
not equal to $+1$. Each time the process leaves the state $\bf +1$ it
attempts to reach $\ms A$ before it returns to $\bf +1$. Let
$\delta$ be the probability of success:
\begin{equation*}
\delta \;=\; \bb P^{\ms V}_{\bf +1} \big[ \, H_{\ms A}  <
H^+_{\bf +1} \big]\;.
\end{equation*}
Let $N\ge 1$ be the number of attempts up to the first success so that
$\sum_{1\le j\le N} \mf e_j$ represents the total time the process
$\sigma(t)$ remained at $\bf +1$ before it reached $\ms A$. It is
clear that under $\bb P^{\ms V}_{\bf +1}$,
\begin{equation*}
\sum_{j=1}^N \mf e_j \;\le\; T_1  \;,
\end{equation*}
and that $N$ is a geometric random variable of parameter $\delta$
independent of the sequence $\{\mf e_j : j\ge 1\}$. In view of the
previous inequality, it is enough to prove that
\begin{equation*}
\lim_{\beta\to\infty} 
\bb P_{\bf +1} \big[ \sum_{j=1}^N \mf e_j  \le
\theta_\beta \epsilon^{-1}_\beta \big] \;=\;0\;.  
\end{equation*}

The previous probability is less than or equal to
\begin{equation*}
\lambda({\bf +1}) \, \theta_\beta \epsilon^{-1}_\beta  
\, \bb P^{\ms V}_{\bf +1} \big[ \, H_{\ms A}  < H^+_{\bf +1} \big]
\;=\; \frac 1{\epsilon_\beta} \, \frac{\theta_\beta}{\mu_\beta({\bf +1}) }
\, \Cap (\ms A, {\bf +1})\;.
\end{equation*}
Since $\theta_\beta \Cap (\ms A, {\bf +1})/\mu_\beta({\bf +1}) \prec
1$, the previous expression vanishes if $\epsilon_\beta$ does not
decrease too fast to $0$. This completes the proof of the lemma.
\end{proof}

\begin{proof}[Proof of Proposition \ref{c-trb}]
The assertion of the proposition is a straightforward consequence of
Lemmas \ref{c-tr} and \ref{coupling}. 

Fix $\sigma\in\ms V_{\bf -1}$ and recall the notation introduced in
Lemma \ref{coupling}.  Let $\ms A = \{\sigma(T_1) \in \ms V_{\bf 0} \}
\cap \{\sigma(T_2) \in \ms V_{\bf +1} \}$. By \eqref{as2}, $\bb P^{\ms
  V}_\sigma [\ms A^c]\to 0$. On the set $\ms A$,
\begin{equation*}
\psi(\sigma^{\ms V}(\theta_\beta t)) \;=\;
- \mb 1\{t < T_1/\theta_\beta\} \;+\; 
\mb 1\{T_2/\theta_\beta \le t < T_3/\theta_\beta\}
\end{equation*}
By Lemma \ref{c-tr}, $(\tau_1/\theta_\beta,
(\tau_2-\tau_1)/\theta_\beta)$ converges to a pair of independent,
mean $1$, exponential random variables. Hence, by \eqref{as1},
$(T_1/\theta_\beta, (T_2-T_1)/\theta_\beta, (T_3-T_2)/\theta_\beta)$
converges in distribution to $(\mf e_1, \mf e_2, \infty)$, where $(\mf
e_1, \mf e_2)$ is a pair of independent, mean $1$, exponential random
variables. This completes the proof.
\end{proof}

\begin{lemma}
\label{neglb}
Let $\Delta = \Omega \setminus \ms V$. For all $\xi\in \ms V$, $t>0$, 
\begin{equation*}
\lim_{\beta\to\infty} \bb E_\xi \Big[ \int_0^t \mb
1\{\sigma(s\theta_\beta) \in \Delta\}\, ds \Big]\;=\; 0\;. 
\end{equation*}
\end{lemma}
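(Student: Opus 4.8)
The plan is to pass to the rescaled integrand, that is, to prove that $\bb E_\xi[\int_0^{t\theta_\beta}\mb 1\{\sigma(s)\in\Delta\}\,ds]=o(\theta_\beta)$. Since $\ms M\subset\ms V$, we have $\Delta=\Omega\setminus\ms V\subset\Delta_0:=\Omega\setminus\ms M$, so it suffices to prove this with $\Delta$ replaced by $\Delta_0$, for every $\xi\in\ms V$ (note that the set of admissible starting points $\ms V$ is larger than $\ms M$). I would organise the bound around an excursion decomposition of the $\Delta_0$-occupation time. Let $\big([D_j,R_j)\big)_{j\ge1}$ be the successive maximal time intervals on which $\sigma(\cdot)$ lies in $\Delta_0$, preceded, when $\xi\notin\ms M$, by the initial interval $[0,H_{\ms M})$. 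Because $\ms M$ carries no internal bonds, at every $D_j$ the chain jumps out of a ground state into a configuration $\sigma(D_j)$ that differs from some $\eta\in\ms M$ at exactly one site, and $R_j=D_j+H_{\ms M}\circ\theta_{D_j}$. Hence, setting $M_\beta:=\max\{\bb E_z[H_{\ms M}]:z\in\Delta_0,\ R_\beta(\eta,z)>0\text{ for some }\eta\in\ms M\}$ (a maximum over finitely many one-defect configurations), the strong Markov property gives
\begin{equation*}
\bb E_\xi\Big[\int_0^{t\theta_\beta}\mb 1\{\sigma(s)\in\Delta_0\}\,ds\Big]\;\le\;\bb E_\xi\big[H_{\ms M}\wedge t\theta_\beta\big]\;+\;M_\beta\,\bb E_\xi\big[\#\{\,j\ge1:D_j<t\theta_\beta\,\}\big]\;.
\end{equation*}

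I would first dispose of the second term. Every later excursion begins with a jump from some $\eta\in\ms M$ into $\Delta_0$, and since every jump out of a ground state lands in $\Delta_0$, this is exactly the number of jumps out of $\ms M$ on $[0,t\theta_\beta]$. By a standard consequence of the Markov property, the expected number of $\eta\to z$ jumps on $[0,t\theta_\beta]$ equals $R_\beta(\eta,z)\,\bb E_\xi[\int_0^{t\theta_\beta}\mb 1\{\sigma(s)=\eta\}\,ds]\le t\theta_\beta\,R_\beta(\eta,z)$; summing over $z$ and over $\eta\in\ms M$,
\begin{equation*}
\bb E_\xi\big[\#\{\,j\ge1:D_j<t\theta_\beta\,\}\big]\;\le\;t\theta_\beta\sum_{\eta\in\ms M}\lambda_\beta(\eta)\;.
\end{equation*}
Each $\eta\in\ms M$ is a local minimum of $\bb H$ with $\bb H(\eta^{x,\pm})-\bb H(\eta)\ge4-h>0$ for every $x$, whence $\sum_{\eta\in\ms M}\lambda_\beta(\eta)\preceq e^{-\beta(4-h)}$ and the second term above is at most $C\,M_\beta\,t\,\theta_\beta\,e^{-\beta(4-h)}$. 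The proof therefore reduces to two claims: (a) $\sup_\beta M_\beta<\infty$; and (b) $\bb E_\xi[H_{\ms M}\wedge t\theta_\beta]=o(\theta_\beta)$ for every $\xi\in\ms V$.

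Claim (b) is trivial when $\xi\in\ms M$. For $\xi\in\ms V_\eta\setminus\{\eta\}$ I would invoke the occupation-time identity $\bb E_\xi[H_{\ms M}]=\Cap(\xi,\ms M)^{-1}\sum_w\mu_\beta(w)\,\bb P_w[H_\xi<H_{\ms M}]$ of \cite[Proposition 6.10]{bl2}, together with condition \eqref{cond}: by \eqref{mf04} and \eqref{cond3}, $\Cap(\xi,\ms M)\ge\Cap(\xi,\eta)$ is of strictly larger order than $\mu_\beta(\eta)/\theta_\beta$, while the numerator sum is $\preceq\mu_\beta(\eta)$ — the terms with $w$ far from $\eta$ are handled through \eqref{mf02} and the energy landscape exactly as in Assertion \ref{as66}, and those with $w$ in $\ms V_\eta$ by Assertion \ref{as52}. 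This gives $\bb E_\xi[H_{\ms M}]\prec\theta_\beta$, hence (b).

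Claim (a) is the heart of the matter: started from a single-defect configuration $z$, the chain returns to $\ms M$ in an expected time that stays bounded as $\beta\to\infty$. Here the repairing move $z\mapsto\eta$ lowers the energy, so has rate one; with probability $1-O(e^{-\beta(2-h)})$ the chain heals in time $O(1)$, and otherwise the defect droplet grows. However, any connected droplet can be shrunk one cell at a time at a cost that is either nonpositive (removing a leaf cell) or at most $h$ (removing a corner of a bulky droplet), hence at rate $\ge e^{-\beta h}$; and once the droplet exceeds the critical size it grows downhill to a different ground state (for instance a large droplet of $0$-spins in a sea of $-1$-spins flows to $\bf 0$, which also lies in $\ms M$). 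A case analysis of these alternatives, comparing the barrier to reach a droplet of a given size with the barrier to leave it and using the explicit control of the activation energies ($\Gamma=\bb H(\mf R^l)-\bb H({\bf -1})$ and the analogous barrier between $\bf 0$ and $\bf +1$) obtained in Sections \ref{sec3}--\ref{sec4}, shows that the rare growing excursions contribute only $o(1)$ to $\bb E_z[H_{\ms M}]$, so $M_\beta$ remains bounded. Feeding (a) and (b) into the two displays above completes the proof. I expect claim (a) to be the main obstacle: one must check that, however the single defect evolves, it meets $\ms M$ again within a time that is $o(e^{\beta(4-h)})$, which is where the quantitative energy-barrier estimates of the earlier sections are essential.
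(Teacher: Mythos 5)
Your architecture is genuinely different from the paper's. The paper first reduces to starting points in $\ms M$ (splitting at $H_{\bf +1}$, resp.\ $H_{\{{\bf -1},{\bf +1}\}}$, and using \eqref{cond3} together with \cite[Proposition 6.10]{bl2} for the initial segment), and then handles $\xi={\bf +1}$ by stationarity ($\bb E_{\bf +1}[\cdots]\le t\,\mu_\beta(\Delta^*)/\mu_\beta({\bf +1})$) and $\xi={\bf 0},{\bf -1}$ by the occupation-time formula up to the exit from the well followed by the ${\bf +1}$ case. Your excursion decomposition, with the L\'evy-type bound $\bb E_\xi[\#\{j:D_j<t\theta_\beta\}]\le t\theta_\beta\sum_{\eta\in\ms M}\lambda_\beta(\eta)\preceq t\theta_\beta e^{-\beta(4-h)}$ and the Wald-type estimate against $M_\beta$, is a sound and in some ways more transparent skeleton, and your claim (b) is correctly disposed of via \eqref{cond3} and the monotonicity of capacities.

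The gap is claim (a). You assert $\sup_\beta M_\beta<\infty$ (and later only the weaker $M_\beta=o(e^{\beta(4-h)})$, which is all you need), but the justification offered — a pathwise case analysis of growing versus shrinking droplets — is a sketch, not a proof: one must control the contribution of \emph{every} configuration $w$ with $\bb H(w)\le\bb H(z)$ to $\sum_w\mu_\beta(w)\bb P_w[H_z<H_{\ms M}]$, including the entire low-lying wells of $\bf 0$ and $\bf +1$ and all supercritical droplets, whose measures dwarf $\mu_\beta(z)$. You yourself flag this as ``the main obstacle,'' which is an accurate self-diagnosis. The statement is true, and the cleanest way to close it inside the paper's framework is to apply Lemma \ref{mest1} with $E_1=\ms M\cup\{z\}$, $B=\ms M$, $x=z$: since $z$ is joined to its ground state by a downhill bond, $\Cap(z,\ms M)\approx\mu_\beta(z)$ and the lemma yields $\bb E_z[H_{\ms M}]=(1+o(1))\mu_\beta(z)/\Cap(z,\ms M)\approx 1$. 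But then hypothesis \eqref{ehyp1} must be verified for the pair $(y,z)$ with $\bb H(y)\le\bb H(z)=\bb H(\eta)+4-h$, which is a genuine extension of Assertion \ref{as15}: for $y$ in a foreign well one needs $\bb H(\xi_{y,z})-\bb H(z)\ge\Gamma-(4-h)$ to beat the barrier $2-h$ from $y$ to $\ms M$, i.e.\ $\Gamma>6-2h$ (true, since $\Gamma\ge 2n_0+2-h$ and $n_0\ge 2$, but this inequality appears nowhere in your argument and is exactly the kind of quantitative input that must be made explicit). Until that verification is written out, the proof is incomplete precisely at the step carrying all the weight; note that the paper's route avoids this issue entirely, at the price of the slightly more involved harmonic-function estimate in the proof of Lemma \ref{negl}.
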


\begin{proof}
Fix $\xi\in \ms V_{\bf +1}$.
On the one hand, by \cite[Proposition 6.10]{bl2},
\begin{equation*}
\frac 1{\theta_\beta} \bb E_\xi \Big[ \int_0^{H_{\bf +1}} \mb
1\{\sigma(s) \in \Delta\}\, ds \Big] \;\le\;
\frac 1{\theta_\beta} \,\frac {\mu_\beta({\bf +1})} {\Cap (\xi, {\bf +1})}\,
\,\frac {\mu_\beta(\Delta)}{\mu_\beta({\bf +1})}\;\cdot
\end{equation*}
This expression vanishes as $\beta\uparrow\infty$ because, by
\eqref{cond3}, $\mu_\beta({\bf +1}) /\Cap (\xi, {\bf +1}) \preceq
\theta_\beta$, and because $\mu_\beta(\Delta) \prec \mu_\beta({\bf
  +1})$, as ${\bf +1}$ is the unique ground state.

On the other hand, by the strong Markov property,
\begin{equation*}
\frac 1{\theta_\beta} \bb E_\xi \Big[ \int_{H_{\bf
    +1}}^{t\theta_\beta} 
\mb 1\{\sigma(s) \in \Delta\}\, ds \Big] \;\le\;
\frac 1{\theta_\beta} \bb E_{\bf +1} \Big[ \int_{0}^{t\theta_\beta} 
\mb 1\{\sigma(s) \in \Delta\}\, ds \Big]\;.
\end{equation*}
Therefore, to prove the lemma for $\xi\in \ms V_{\bf +1}$ it is enough
to show that
\begin{equation*}
\lim_{\beta\to\infty} \bb E_{\bf +1} \Big[ \int_0^t \mb
1\{\sigma(s\theta_\beta) \in \Delta\}\, ds \Big]\;=\; 0\;. 
\end{equation*}
This last assertion follows from Lemma \ref{negl} below.

Similar arguments permit to reduce the statement of the lemma for
$\xi\in\ms V_{\bf 0}$ (resp. $\xi\in\ms V_{\bf -1}$) to the
verification that
\begin{equation*}
\lim_{\beta\to\infty} \bb E_{\zeta} \Big[ \int_0^t \mb
1\{\sigma(s\theta_\beta) \in \Delta\}\, ds \Big]\;=\; 0\;, 
\end{equation*}
for $\zeta= {\bf 0}$ (resp. $\zeta= {\bf -1}$), which follows from
Lemma \ref{negl} below.

\end{proof}

\begin{lemma}
\label{negl}
Let $\Delta^* = \Omega \setminus \ms M$. For all $\xi\in \ms M$, $t>0$, 
\begin{equation*}
\lim_{\beta\to\infty} \bb E_\xi \Big[ \int_0^t \mb
1\{\sigma(s\theta_\beta) \in \Delta^*\}\, ds \Big]\;=\; 0\;. 
\end{equation*}
\end{lemma}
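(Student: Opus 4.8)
The plan is to single out the starting point $\xi={\bf +1}$ and to reduce the cases $\xi\in\{{\bf -1},{\bf 0}\}$ to it. For $\xi={\bf +1}$, I would argue directly from reversibility: since $\mu_\beta({\bf +1})\,p^\beta_s({\bf +1},\sigma)=\mu_\beta(\sigma)\,p^\beta_s(\sigma,{\bf +1})\le \mu_\beta(\sigma)$ for every $s\ge 0$ and $\sigma\in\Omega$, summing over $\sigma\in\Delta^*$ gives $\bb P_{\bf +1}[\sigma(s)\in\Delta^*]\le \mu_\beta(\Delta^*)/\mu_\beta({\bf +1})$ uniformly in $s$, whence
\[
\frac1{\theta_\beta}\,\bb E_{\bf +1}\Big[\int_0^{t\theta_\beta}\mb 1\{\sigma(s)\in\Delta^*\}\,ds\Big]\;\le\;t\,\frac{\mu_\beta(\Delta^*)}{\mu_\beta({\bf +1})}\;,
\]
which tends to $0$ because ${\bf +1}$ is the unique ground state, so $\bb H(\sigma)>\bb H({\bf +1})$ for every $\sigma\in\Delta^*$ and hence $\mu_\beta(\Delta^*)\prec\mu_\beta({\bf +1})$. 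For $\xi\in\{{\bf -1},{\bf 0}\}$ I would split the integral at $H_{\bf +1}$: on $\{H_{\bf +1}>t\theta_\beta\}$ the integral is at most $\int_0^{H_{\bf +1}}\mb 1\{\Delta^*\}\,ds$, while on $\{H_{\bf +1}\le t\theta_\beta\}$ the strong Markov property at $H_{\bf +1}$ bounds the remaining contribution by $\bb E_{\bf +1}[\int_0^{t\theta_\beta}\mb 1\{\Delta^*\}\,ds]$. Thus
\[
\bb E_\xi\Big[\int_0^{t\theta_\beta}\mb 1\{\Delta^*\}\,ds\Big]\;\le\;\bb E_\xi\Big[\int_0^{H_{\bf +1}}\mb 1\{\Delta^*\}\,ds\Big]\;+\;\bb E_{\bf +1}\Big[\int_0^{t\theta_\beta}\mb 1\{\Delta^*\}\,ds\Big]\;,
\]
and the last term is $o(\theta_\beta)$ by the previous case.

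It remains to show $\bb E_\xi[\int_0^{H_{\bf +1}}\mb 1\{\Delta^*\}\,ds]=o(\theta_\beta)$ for $\xi\in\{{\bf -1},{\bf 0}\}$. The crude bound $\le \mu_\beta(\Delta^*)/\Cap(\xi,{\bf +1})$ coming from \cite[Proposition 6.10]{bl2} is insufficient, because $\mu_\beta(\Delta^*)$ is dominated by configurations near ${\bf +1}$, which are never visited before $H_{\bf +1}$. Instead, writing $\Omega=\ms M\cup\Delta^*$ as a disjoint union, I would use
\[
\bb E_\xi\Big[\int_0^{H_{\bf +1}}\mb 1\{\Delta^*\}\,ds\Big]\;=\;\bb E_\xi[H_{\bf +1}]\;-\;\bb E_\xi\Big[\int_0^{H_{\bf +1}}\mb 1\{\ms M\}\,ds\Big]\;,
\]
and invoke Lemma \ref{mest1} with $E_1=\ms M$ and $B=\{{\bf +1}\}$: its hypothesis \eqref{ehyp1} is exactly Assertion \ref{as15} (via Remark \ref{rm1}), and by \cite[Proposition 6.10]{bl2} its conclusion reads $\bb E_\xi[H_{\bf +1}]=(1+o(1))\,\bb E_\xi[\int_0^{H_{\bf +1}}\mb 1\{\ms M\}\,ds]$. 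Subtracting, $\bb E_\xi[\int_0^{H_{\bf +1}}\mb 1\{\Delta^*\}\,ds]=o(1)\,\bb E_\xi[\int_0^{H_{\bf +1}}\mb 1\{\ms M\}\,ds]\le o(1)\,\bb E_\xi[H_{\bf +1}]$, and by Proposition \ref{mt2} one has $\bb E_{\bf -1}[H_{\bf +1}]\sim 2\theta_\beta$ and $\bb E_{\bf 0}[H_{\bf +1}]\sim\theta_\beta$, so this is $o(\theta_\beta)$. Dividing by $\theta_\beta$ and combining the two cases completes the proof.

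The only substantive point is the observation above: Lemma \ref{mest1}, which was designed to compute $\bb E_\xi[H_{\bf +1}]$, in fact says more — that asymptotically all of the hitting time is spent inside the wells $\ms M$ — so that the time spent in the complement $\Delta^*$ is automatically an $o(1)$ fraction of $\bb E_\xi[H_{\bf +1}]=O(\theta_\beta)$. Everything else (the split at $H_{\bf +1}$, the reversibility bound for the start at ${\bf +1}$, and the identification of the subtracted term with the $\ms M$-part of the Green function) is routine, and no estimate on the energy landscape beyond those already established is required.
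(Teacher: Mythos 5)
Your proof is correct, and for the starting points ${\bf -1}$ and ${\bf 0}$ it takes a genuinely different route from the paper. The paper splits the time interval at $H_{\{{\bf -1},{\bf +1}\}}$ and proves separately (its display \eqref{ea1}) that the time spent in $\Delta^*$ before that hitting time is $o(\theta_\beta)$; this requires a dedicated estimate of $\sum_{\sigma}\mu_\beta(\sigma)\,\bb P_\sigma[H_{\bf 0}<H_{\{{\bf -1},{\bf +1}\}}]/\mu_\beta({\bf 0})$ over the high-measure part of $\Delta^*$, carried out by bounding each term through \eqref{mf02} by $\frac{\Cap(\sigma,{\bf 0})}{\mu_\beta({\bf 0})}\cdot\frac{\mu_\beta(\sigma)}{\Cap(\sigma,\ms M)}$ and a time-scale comparison; it then handles ${\bf -1}$ by a further reduction to the ${\bf 0}$ case. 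You instead split at $H_{\bf +1}$ and observe that the quantity to be controlled, $\bb E_\xi[\int_0^{H_{\bf +1}}\mb 1\{\Delta^*\}\,ds]$, is exactly the part of the Green function supported off $\ms M$, which the proof of Lemma \ref{mest1} (with $E_1=\ms M$, $B=\{{\bf +1}\}$, hypothesis \eqref{ehyp1} supplied by Assertion \ref{as15}) already shows to be $o(1)$ times the part supported on $\ms M$, hence $o(1)\,\bb E_\xi[H_{\bf +1}]=o(\theta_\beta)$ by Proposition \ref{mt2}. This buys you two things: no new capacity estimate beyond those already established, and a uniform treatment of $\xi={\bf -1}$ and $\xi={\bf 0}$ without the intermediate event $\{H_{\bf +1}<H_{\bf -1}\}$. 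The one point worth making explicit in a write-up is the term-by-term Green function identity $\bb E_\xi[\int_0^{H_{\bf +1}}\mb 1\{\sigma(s)\in A\}\,ds]=\Cap(\xi,{\bf +1})^{-1}\sum_{y\in A}\mu_\beta(y)\,\bb P_y[H_\xi<H_{\bf +1}]$, which is what lets you identify the subtracted term with the $E_1$-sum in Lemma \ref{mest1}; the paper uses the same identity implicitly in its own proof, so this is legitimate. Your argument for $\xi={\bf +1}$ coincides with the paper's (a stationarity/reversibility bound by $t\,\mu_\beta(\Delta^*)/\mu_\beta({\bf +1})$).
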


\begin{proof}
Consider first the case $\xi = {\bf +1}$. Clearly,
\begin{equation*}
\bb E_{\bf +1} \Big[ \int_0^t \mb
1\{\sigma(s\theta_\beta) \in \Delta^*\}\, ds \Big]
\;\le\; \frac 1{\mu_\beta({\bf +1})}
\sum_{\sigma\in\Omega} \mu_\beta(\sigma)\,
\bb E_{\sigma} \Big[ \int_0^t \mb
1\{\sigma(s\theta_\beta) \in \Delta^*\}\, ds \Big]\;.
\end{equation*}
Since $\mu_\beta$ is the stationary state, the previous expression is
equal to
\begin{equation*}
\frac {t \mu_\beta(\Delta^*)}{\mu_\beta({\bf +1})} \;,
\end{equation*}
which vanishes as $\beta\to\infty$ because $\bf +1$ is the unique
ground state.

We turn to the case $\xi = {\bf 0}$. We first claim that
\begin{equation}
\label{ea1}
\lim_{\beta\to\infty} \bb E_{\bf 0} \Big[ 
\frac 1{\theta_\beta} \int_0^{H_{\{{\bf -1}, {\bf +1}\}}} \mb
1\{\sigma(s) \in \Delta^*\}\, ds \Big] \;=\; 0
\;.
\end{equation}
Indeed, by \cite[Proposition 6.10]{bl2}, the previous expectation is
equal to 
\begin{equation*}
\frac 1{\theta_\beta}\,
\frac{\< V \mb 1\{\Delta^*\}\>_{\mu_\beta}}
{\Cap ({\bf 0} , \{{\bf -1}, {\bf +1}\} )} \;=\;
\frac {\mu_\beta({\bf 0})}{\theta_\beta\, 
\Cap ({\bf 0} , \{{\bf -1}, {\bf +1}\} )}\,
\frac{\< V \mb 1\{\Delta^*\}\>_{\mu_\beta}}
{\mu_\beta({\bf 0})} \;,
\end{equation*}
where $V$ is the harmonic function $V(\sigma) = \bb P_\sigma[H_{\bf 0}
< H_{\{{\bf -1}, {\bf +1}\}}]$. By definition of $\theta_\beta$, the
first fraction on the right hand side is bounded. 

It remains to estimate the second fraction, which is equal to
\begin{equation*}
\frac 1{\mu_\beta({\bf 0})} \sum_{\sigma \in \Delta^*_0}  \mu_\beta(\sigma) \bb P_\sigma[H_{\bf 0}
< H_{\{{\bf -1}, {\bf +1}\}}]\;,
\end{equation*}
where $\Delta^*_0 = \{\sigma\in \Delta^* : \mu_\beta(\sigma) \succeq
\mu_\beta({\bf 0})\}$. By \eqref{mf02}, this sum is less than or equal to
\begin{equation*}
\sum_{\sigma \in \Delta^*_0} \frac {\Cap (\sigma , {\bf 0})} {\mu_\beta({\bf 0})} 
\, \frac{\mu_\beta(\sigma)}{\Cap (\sigma, \ms M)}\;.
\end{equation*}
Each term of this sum vanishes as $\beta\uparrow\infty$. Indeed, as
$\sigma$ belongs to $\Delta^*_0$, to reach $\sigma$ from $\bf 0$ the
chain has to escape from the well of $\bf 0$ so that $\Cap(\sigma ,
{\bf 0})/\mu_\beta({\bf 0}) \approx \theta_\beta^{-1}$. On the other
hand, $\mu_\beta(\sigma)/\Cap (\sigma, \ms M)$ is the time scale in
which the process reaches one of the configurations in $\ms M$ starting
from $\sigma$, a time scale of smaller order than the one in which it
jumps between configurations in $\ms M$.


By \eqref{ea1}, to prove the lemma for $\xi={\bf 0}$, we just have to
show that
\begin{equation*}
\lim_{\beta\to\infty} \bb E_{\bf 0} \Big[ \frac 1{\theta_\beta} \int_0^{t\theta_\beta} \mb
1\{\sigma(s) \in \Delta^*\}\, ds \, \mb 1\{H_{\{{\bf -1},
  {\bf +1}\}} \le t\theta_\beta \} \Big]\;=\; 0\;. 
\end{equation*}
Since $\bb P_{\bf 0} [H_{\bf -1} <H_{\bf +1}] \to 0$, we may add in
the previous expectation the indicator of the set $\{H_{\bf +1}
<H_{\bf -1}\}$. Rewrite the integral over the time interval $[0,
t\theta_\beta]$ as the sum of an integral over $[0, H_{\{{\bf -1},
  {\bf +1}\}}]$ with one over the time interval $[H_{\{{\bf -1}, {\bf
    +1}\}}, t\theta_\beta]$. The expectation of the first one is
handled by \eqref{ea1}. The expectation of the second one, by the
strong Markov property, on the set $\{H_{\{{\bf -1}, {\bf +1}\}} \le
t\theta_\beta \} \cap \{H_{\bf +1} <H_{\bf -1}\}$, is less than or
equal to
\begin{equation*}
\bb E_{\bf +1} \Big[ \frac 1{\theta_\beta} \int_0^{t\theta_\beta} \mb
1\{\sigma(s) \in \Delta^*\}\, ds  \Big]\;. 
\end{equation*}
By the first part of the proof this expectation vanishes as
$\beta\uparrow\infty$. 

It remains to consider the case $\xi = {\bf -1}$. As in the case $\xi
= {\bf 0}$, we first estimate the expectation in \eqref{ea1}, with
$H_{\{{\bf 0}, {\bf +1}\}}$ instead of $H_{\{{\bf -1}, {\bf
    +1}\}}$. Then, we repeat the arguments presented for $\xi = {\bf
  0}$, with obvious modifications, to reduce the case $\xi = {\bf -1}$
to the case $\xi = {\bf 0}$, which has already been examined.
\end{proof}

We conclude this section proving the assertion of Remark
\ref{rm3}. Fix $\eta\in \ms M$, $\sigma\in\ms V_\eta$, $\sigma\not =
\eta$.  By \eqref{mf02}, \eqref{mf03} and Assertion \ref{as51},
\begin{equation}
\label{basin}
\bb P_\sigma[H_{\ms M \setminus \{\eta\}} < H_{\eta}] \;\le\;
\frac{\Cap (\sigma , \ms M \setminus \{\eta\})}{\Cap (\sigma, \eta)}
\;\approx\; \sum_{\xi\in \ms M \setminus \{\eta\}}
\frac{\Cap (\sigma , \xi)}{\Cap (\sigma, \eta)}
\;\approx\; \sum_{\xi\in \ms M \setminus \{\eta\}}
\frac{\Cap (\eta , \xi)}{\Cap (\sigma, \eta)}\;\cdot
\end{equation}
By monotonicity of the capacity, the previous expression is bounded by
$2\Cap (\eta , \ms M \setminus \{\eta\})/\Cap (\sigma, \eta)$, which
vanishes in view of Assertion \ref{as52}.  

\smallskip\noindent{\bf Acknowledgements.} The authors wish to thank
O. Benois and M. Mourragui for fruitful discussions.

\end {document}